%% F1702
%% oct 6, 2017
%%  
 
\documentclass{amsart}

\usepackage{xcolor} 
\usepackage{graphicx} 
\usepackage{enumerate}

\usepackage{amsmath}
\usepackage{amssymb}
\usepackage{amsrefs}

\newcommand\axiom{\mathsf}

\newcommand\AP{\axiom{AP}}
 
\theoremstyle{plain}
\newtheorem{theorem}{Theorem}[section]
\newtheorem{lemma}[theorem]{Lemma}
\newtheorem{proposition}[theorem]{Proposition}
\newtheorem{corollary}[theorem]{Corollary}
\newtheorem{definition}[theorem]{Definition}

\theoremstyle{definition}

\newtheorem{claim}{Claim}

\theoremstyle{remark}

\numberwithin{equation}{section}

\DeclareMathOperator{\id}{id}
\DeclareMathOperator{\Succ}{Succ}

\DeclareMathOperator{\dom}{dom}

\DeclareMathOperator{\acc}{acc}

\DeclareMathOperator{\sakne}{stem}

\DeclareMathOperator{\Fn}{\mathop{Fn}}

\newcommand{\forces}[2]{\Vdash_{#1} \mbox{``\,} #2 \mbox{''}}

\newcommand{\Qposet}{{\mathbb Q}}
\newcommand{\Poset}{{\mathbb P}}

\newcommand{\cprec}{\mathop{<\!\!{\cdot}}}

\begin{document}
\title{Pseudo P-points and splitting number}
\author[A. Dow]{Alan Dow}
\address{Department of Mathematics,
University of North Carolina at Charlotte, 
Charlotte, NC 28223}
\email{adow@uncc.edu}
\author[S. Shelah]{Saharon Shelah}
\address{Department of Mathematics, Rutgers University, Hill Center,
 Piscataway, 
 New Jersey, U.S.A. 08854-8019}
\curraddr{Institute of Mathematics\\Hebrew University\\
  Givat Ram, Jerusalem 91904, Israel
}
\email{shelah@math.rutgers.edu}

\begin{abstract}
We construct a model in which the splitting number is large
and every ultrafilter has a small subset with no pseudo-intersection.
\end{abstract}

  \thanks{Research partially supported by NSF grant no: 136974.  Paper 1134
    on Shelah's list} 

\maketitle

Throughout the paper, Hyp$(\kappa,\lambda)$ will denote the
assumptions detailed in this paragraph. Each of $\kappa$ and $\lambda$
is a regular cardinal and
 $\aleph_1 < \kappa < \lambda$,
   $\lambda^{<\lambda} =\lambda$.
   The set $E$ is a stationary subset
   of   $  S^{\lambda}_\kappa$ 
   where $S^{\lambda}_\kappa\subset \lambda$ is
the set of
 ordinals of cofinality $\kappa$. There is a $\square$-sequence $\{
 C_\alpha : \alpha \in \lambda\}$ such that for limit 
ordinals $\alpha < \beta \in \lambda$ 
\begin{enumerate}
 \item $C_\alpha$ is a closed unbounded subset of $\alpha$,
 \item if $\alpha\in \acc(C_\beta)$, then $C_\alpha = C_\beta\cap\alpha$,
 \item $C_\alpha\cap E$ is empty.
\end{enumerate}
Naturally $E$ is a non-reflecting stationary set.  We also assume
there is a $\diamondsuit(E)$-sequence $\{ X_\alpha : \alpha \in E\}$,
 where $X_\alpha \subset \alpha$ and for all $X\subset \lambda$,
  there is a stationary set $E_X\subset E$ such that 
   $X_\alpha = X\cap \alpha$ for all $\alpha\in E_X$.

A set $b\subset \mathbb N$ is a pseudo-intersection of a family
 $\mathcal A$ of subsets of $\mathbb N$,  if $b$ is infinite
 and $b\setminus a$ is finite for all $a\in \mathcal A$. The
 pseudo-intersection 
 number of a free ultrafilter $\mathcal U$ on $\mathbb N$, denoted
  $\pi p(\mathcal U)$ is the smallest cardinal $\mu$
  such that there is a subset
   $\mathcal A\subset \mathcal U$ of cardinality $\mu$ with no
   pseudo-intersection.
   The splitting number, $\mathfrak s$, is very well known. It can be defined
   as the minimum cardinal such that for every  family $\mathcal A
   \subset [\mathbb N]^{\aleph_0}$ of smaller cardinality, there is a maximal free
 filter on the Boolean algebra generated by $\mathcal A\cup [\mathbb
 N]^{<\aleph_0}$  
 that has a pseudo-intersection. 
 
It was shown in \cite{BrendleShelah642} that it is consistent to have 
   $\pi p(\mathcal U)^+\leq \mathfrak s$ for all free ultrafilters
   $\mathcal U$ on  
   $\mathbb N$.  We construct a model by ccc forcing in which
    $\mathfrak s=\lambda=\mathfrak c$ and 
    $\pi p(\mathcal U)\leq \kappa$ for all ultrafilters $\mathcal U$ on $\mathbb N$.
    In fact we construct two models,  one in which $\mathfrak b = \lambda$
    and the second in which $\mathfrak b = \kappa$.

\section{preliminaries}

For a poset $(P,<_P)$, a set $D\subset P$ is  dense if
for each $p\in P$, there is a $d\in D$ with $p<d$.  Similarly, 
 a set $G\subset P$ is a filter (using the Jerusalem convention)
 if it is closed downwards and finitely
 directed upwards.  Therefore in the forcing language if $p<q$ are
 in $P$,  $q$ is a stronger condition and a subset $A$ of $P$ is
 an antichain if no pair of elements of $A$ have a common upper bound.
For convenience, we assume each forcing poset has a minimum element
$1_P$.  

A $P$-name $\dot a$ of a subset of $\omega$ (respectively $\mathbb N$)
 is {\textit{canonical} }
if for each $n\in \omega$ (respectively $n\in \mathbb N$),
 there is a (possibly empty) antichain $A_n$
of $P$ such that $\dot a = \bigcup \{ \{n\}\times A_n : n\in
\omega\}$.
There should be no risk of confusion if we abuse notation and let
each $n\in \omega$ also denote a $P$-name for itself.

For an infinite set $I$, 
the poset $\Fn(I,2)$ is the standard Cohen poset consisting of finite
partial functions  from $I$ into $2$ ordered by   extension. If
 $G$ is a generic filter for $\Fn(I\times \mathbb N ,2)$, 
 we obtain the standard
 sequence $\{ \dot x_i  : i\in I\}$  of canonical names
 for Cohen reals where 
 $\dot x_i = \{ (n, \langle (i,n),1\rangle) : n\in \mathbb N\}$. 
 We will refer to this  sequence as the canonical generic sequence we
 get from $\Fn(I\times \mathbb N, 2)$. 
 This family is forced to have the finite intersection property, 
 moreover, it is forced to be an independent family.  However, rather
 than design  a new poset
 we can use such  sequences to define a uncountable families
of  pairwise almost disjoint   subsets that are each Cohen
 over the ground model. Fix a sequence $\{ e_\alpha : \alpha\in \omega_1\}$
  (in the ground model) so that for each $\omega\leq 
  \alpha\in \omega_1$, $e_\alpha$ is a 
  bijection from $\omega$ onto $\alpha$.  
 
\begin{definition}
 For any\label{adfamily} sequence $\vec x = 
 \{ x_\alpha : \alpha \in\omega_1\}$ of subsets of $\mathbb N$,
 define, for 
 $\alpha\in \omega_1$, $c(\vec x,\alpha)$ and $a(\vec x, \alpha)$ where
\begin{enumerate}
 \item for $\alpha < \omega$, $c(\vec x,\alpha) 
 = x_\alpha\setminus \bigcup_{k<\alpha} x_k$, 
 \item for $\omega\leq \alpha$, $c(\vec x, \alpha) = 
  \{ \min\left( x_\alpha
   \setminus \bigcup\{ x_{e_\alpha(k)} : k < n\} \right): n\in \omega\}$,
   \item $a(\vec x, \alpha) = \mathbb N\setminus c(\vec x, \alpha)$.
\end{enumerate}
 \end{definition}

\begin{definition}
 For any set $I$, 
 we have the canonical generic sequence\label{cohsequence} 
   $\{  \dot x_{i,\alpha} : i\in I, \alpha\in \omega_1\}$ for the poset
    $\Fn(I\times \omega_1\times N,2)$.   For each $i\in I$, 
    we will let $\vec x_i $ denote the subsequence
       $\{  \dot x_{i,\alpha} : \alpha\in \omega_1\}$. 
       We then similarly have the sequences
         $\{ \dot c(\vec x_i, \alpha) : \alpha\in \omega_1\}$ and
           $\{ \dot a(\vec x_i, \alpha) : \alpha \in \omega_1\}$ defined
           as in Definition \ref{adfamily}.
\end{definition}

Let us recall that a poset $(P,<_P)$ is a complete suborder of a 
poset $(Q,<_Q)$ providing $P\subset Q$, $<_P~\subset~ <_Q$,
and each maximal antichain of $(P,<_P)$ is also
a maximal antichain of $(Q,<_Q)$. 
Note that it follows that incomparable members of $(P,<_P)$
are still incomparable in $(Q,<_Q)$, i.e. $p_1  \perp_P p_2$ 
implies $p_1 \perp_Q p_2$.
 We will say that a chain   $\{ P_i : i<\kappa\}$ of posets
  is a $\cprec$-chain of posets
 if $P_i\cprec P_j$ for all $i<j<\kappa$. We will say such a chain is
 a continuous 
 $\cprec$-chain if  $P_j = \bigcup_{i<j}P_i$ whenever $j$ has uncountable cofinality.
 We will use the term  strongly continuous for a chain $\{ P_\alpha :
 \alpha <\gamma\}$ 
 of posets 
 if $P_\beta =\bigcup_{\alpha<\beta }P_\alpha$ for all limits $\beta < \gamma$.

\begin{proposition}
 If $P\cprec Q$ and $q\in Q$, then there is a $p\in P$ (a projection)\label{projection} 
 with the property that for all $r\in P$ with $p<_P r$ ($r$ is
 stronger than $p$), 
  there is a $q_r\in Q$ that is stronger than each of $q$ and $r$. 
\end{proposition}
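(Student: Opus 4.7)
The plan is to prove the contrapositive: assume no such projection $p$ exists and derive a contradiction from the complete suborder hypothesis. If no $p$ works, then for every $p\in P$ there is some $r\in P$ with $p<_P r$ such that $r$ and $q$ have no common upper bound in $Q$. In other words, the set
\[
 D=\{ r\in P : r \perp_Q q \}
\]
is dense in $P$.

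Next I would apply Zorn's Lemma to the family of antichains of $P$ contained in $D$ to produce a maximal such antichain $A\subset D$. A quick check shows $A$ is actually a maximal antichain of the whole poset $P$: given any $s\in P$, density of $D$ furnishes some $r\in D$ with $s<_P r$, and maximality of $A$ inside $D$ forces $r$ to be $P$-compatible with some $a\in A$, so $s$ is $P$-compatible with $a$ as well. Thus $A$ is predense in $P$, and being an antichain, is maximal in $P$.

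Now invoke the hypothesis $P\cprec Q$: by definition $A$ remains a maximal antichain of $Q$. In particular $q\in Q$ must have a common upper bound in $Q$ with some $a\in A$. But every member of $A$ lies in $D$, hence is $Q$-incompatible with $q$, yielding the desired contradiction.

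I do not expect any serious obstacle: the only subtle point is the passage from "maximal antichain within the dense set $D$" to "maximal antichain of $P$", and that is standard. Everything else is a direct unpacking of the $\cprec$ definition together with the observation that incompatibility in $P$ transfers to incompatibility in $Q$, which was already recorded in the paragraph preceding the proposition.
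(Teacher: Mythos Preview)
Your argument is correct and is the standard proof of this well-known fact; the paper itself states the proposition without proof, treating it as folklore about complete suborders.
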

 
When we say that $V$ or $V'$ is a model, we will mean a transitive set
that is a model of a sufficiently large fragment of ZFC.

\begin{definition}
Let $V$ and $V'$ be models with $V\subset V'$.
\begin{enumerate}
\item If $P \in V$ and $Q\in V'$ are posets, we write $P<_V Q$ if
$P\subset Q$, $<_P\subset <_Q$, and 
each maximal antichain $A\subset P$ in $V$  is also a maximal antichain of $Q$.
Of course $P <_{V'} Q$ is the same as $V'\models P\cprec Q$.
\item A family $\mathcal A\subset [\mathbb N]^{\aleph_0}$ is thin over $V$ if
for each $\ell\in\omega$ and each infinite sequence $\{ H_n : n\in \omega\}\subset
[\mathbb N]^{\leq\ell}\cap V$ of pairwise disjoint sets, there is,
for each $a$ in the ideal generated by $\mathcal A$, an $n$ such that 
$H_n\cap a$ is empty. 
\item A family
 $\mathcal A\subset [\mathbb N]^{\aleph_0}$ is very thin over $V$
 if for each $a$ in the ideal generated by $\mathcal A$ and 
 each $g\in {\mathbb N}^{\mathbb N}\cap V$,
 there is an $n\in \mathbb N$ such that $a\cap [n,g(n)] $ is empty. 
\end{enumerate}
\end{definition}

We also will need the next result taken from 
 \cite{BrendleFischer}*{Lemma 13}. 
 
\begin{lemma} 
Let $\mathbb P, \mathbb Q$ be partial orders such that
 $\mathbb P\cprec   \mathbb Q$. Recall that
 the   name $\id_P = \{ (p,p) : p\in P\}$ is
 the $P$-name for the generic filter on $P$.
Let $\dot {\mathbb A}$ be a $\mathbb P$-name for a
 forcing notion\label{successorMatrix}
  and let $\dot {\mathbb B}$ be a $\mathbb Q$-name for a forcing
notion such that $\Vdash_{\mathbb Q} \dot {\mathbb A}  <_{V[\id_P]}
 \dot {\mathbb B}$,  
then 
$\mathbb P * \dot {\mathbb A} \cprec \mathbb Q * \dot  {\mathbb B}$  
\end{lemma}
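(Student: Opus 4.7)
The plan is to verify the three conditions defining $\cprec$ in turn: set containment, order agreement, and preservation of maximal antichains.  The first two are essentially formal: since $\mathbb{P}\subseteq\mathbb{Q}$, any $\mathbb{P}$-name can be read as a $\mathbb{Q}$-name; since $\mathbb{P}\cprec\mathbb{Q}$, $\mathbb{P}$-forcing and $\mathbb{Q}$-forcing agree on statements about $\mathbb{P}$-names; and the hypothesis $\Vdash_{\mathbb{Q}}\dot{\mathbb{A}}<_{V[\id_P]}\dot{\mathbb{B}}$ supplies both the set containment and the agreement of orders on $\mathbb{P}*\dot{\mathbb{A}}\subseteq\mathbb{Q}*\dot{\mathbb{B}}$.

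For antichain preservation, suppose $(p_1,\dot{a}_1)$ and $(p_2,\dot{a}_2)$ are incompatible in $\mathbb{P}*\dot{\mathbb{A}}$ yet share a common extension $(q,\dot{b})$ in $\mathbb{Q}*\dot{\mathbb{B}}$.  Choose a $\mathbb{Q}$-generic $G_Q\ni q$ and set $G_P=G_Q\cap\mathbb{P}$, which is $\mathbb{P}$-generic over $V$ since $\mathbb{P}\cprec\mathbb{Q}$.  Then $p_1,p_2\in G_P$, and both $\dot{a}_1[G_P]$ and $\dot{a}_2[G_P]$ lie below $\dot{b}[G_Q]$ in $\dot{\mathbb{B}}[G_Q]$, hence are compatible there.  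Because $\dot{\mathbb{A}}[G_P]<_{V[G_P]}\dot{\mathbb{B}}[G_Q]$, any incompatibility in the regular subforcing would lift, so $\dot{a}_1[G_P]$ and $\dot{a}_2[G_P]$ must already be compatible in $\dot{\mathbb{A}}[G_P]$.  By the forcing theorem there is $p\in G_P$ above $p_1,p_2$ and a $\mathbb{P}$-name $\dot{a}$ such that $p$ forces $\dot{a}\ge\dot{a}_1,\dot{a}_2$ in $\dot{\mathbb{A}}$; then $(p,\dot{a})$ is a common extension of $(p_1,\dot{a}_1)$ and $(p_2,\dot{a}_2)$ in $\mathbb{P}*\dot{\mathbb{A}}$, contradicting their incompatibility.

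For maximality, let $A$ be a maximal antichain of $\mathbb{P}*\dot{\mathbb{A}}$ and $(q,\dot{b})\in\mathbb{Q}*\dot{\mathbb{B}}$ arbitrary.  Fix $\mathbb{Q}$-generic $G_Q\ni q$ with $G_P=G_Q\cap\mathbb{P}$.  The key step is the standard observation that
\[
A_0:=\{\dot{a}[G_P] : (p,\dot{a})\in A,\ p\in G_P\}
\]
is a maximal antichain of $\dot{\mathbb{A}}[G_P]$ in $V[G_P]$: for any $\mathbb{P}$-name $\dot{a}'$ for an element of $\dot{\mathbb{A}}$, maximality of $A$ in $\mathbb{P}*\dot{\mathbb{A}}$ makes the set of $p^*\in\mathbb{P}$ admitting some $(p,\dot{a})\in A$ with $p\le p^*$ and $p^*$ forcing $\dot{a}'$ compatible with $\dot{a}$ in $\dot{\mathbb{A}}$ dense, so it meets $G_P$ and yields the needed compatibility of $\dot{a}'[G_P]$ with an element of $A_0$.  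The hypothesis $\dot{\mathbb{A}}[G_P]<_{V[G_P]}\dot{\mathbb{B}}[G_Q]$ then promotes $A_0$ to a maximal antichain of $\dot{\mathbb{B}}[G_Q]$, so $\dot{b}[G_Q]$ is compatible with some $\dot{a}[G_P]\in A_0$.  Unwinding via Proposition~\ref{projection}, one obtains $q'\in\mathbb{Q}$ above both the witnessing $p\in G_P$ and $q$, together with a $\mathbb{Q}$-name $\dot{c}$ for a common $\dot{\mathbb{B}}$-extension of $\dot{a}$ and $\dot{b}$; then $(q',\dot{c})$ extends both $(q,\dot{b})$ and $(p,\dot{a})\in A$, completing the argument.

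The principal obstacle is this antichain-lifting step: correctly relating maximality of $A$ in the iteration $\mathbb{P}*\dot{\mathbb{A}}$ to maximality of the induced set $A_0$ in the iterand $\dot{\mathbb{A}}[G_P]$ at a generic stage, and then using the hypothesis $<_{V[\id_P]}$ to lift this maximality to $\dot{\mathbb{B}}[G_Q]$.  Everything else is standard bookkeeping between $\mathbb{Q}$-conditions, their $\mathbb{P}$-projections via Proposition~\ref{projection}, and the forcing theorem applied to the relevant $\dot{\mathbb{B}}$-names.
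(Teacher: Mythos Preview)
The paper does not give its own proof of this lemma; it simply cites \cite{BrendleFischer}*{Lemma 13}. Your argument is the standard one and is correct: reduce to showing that a maximal antichain $A\subset\mathbb{P}*\dot{\mathbb{A}}$ stays predense in $\mathbb{Q}*\dot{\mathbb{B}}$, pass to a $\mathbb{Q}$-generic $G_Q$, observe that the trace $A_0$ of $A$ on $G_P$ is a maximal antichain of $\dot{\mathbb{A}}[G_P]$ in $V[G_P]$, and invoke the hypothesis $\dot{\mathbb{A}}[G_P]<_{V[G_P]}\dot{\mathbb{B}}[G_Q]$ to make $A_0$ maximal in $\dot{\mathbb{B}}[G_Q]$.

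Two minor remarks. First, the appeal to Proposition~\ref{projection} in your final unwinding step is not really what is doing the work: once you have $p\in G_P$, $q\in G_Q$, and a common $\dot{\mathbb{B}}[G_Q]$-extension of $\dot a[G_P]$ and $\dot b[G_Q]$, you simply pick $q'\in G_Q$ above both $p$ and $q$ forcing the relevant compatibility statement, and take $\dot c$ to be a $\mathbb{Q}$-name for the witness; this is just the forcing theorem, not the projection lemma. Second, your separate paragraph on preservation of incompatibility is logically redundant (it follows from preservation of maximal antichains, as the paper remarks after its definition of $\cprec$), though of course it does no harm to include it.
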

  
  It is immediate that the conclusion of  Lemma \ref{successorMatrix} holds
  if $\mathbb P$ forces that
either $\mathbb A = \mathbb B$ or if $\mathbb A = \Fn(I,2) 
  \subset \Fn(J,2) = \mathbb B$.

\begin{proposition} 
 If $V\subset V'$ are models and $\mathcal A$ is thin (very thin) over
 $V$, then  for each $\alpha\in \omega_1$, $\mathcal A \cup \{ a(\vec
 x, \alpha)\}$ is thin  
 (respectively very thin)\label{staythin} 
 over $V$ where $\vec x = \{\dot x_\alpha : \alpha\in \omega_1\}$ is
 the canonical 
 generic sequence we get from forcing with $\Fn(\omega_1\times \mathbb
 N, 2)$ over 
  $V$.
 \end{proposition}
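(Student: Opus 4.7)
Any element of the ideal generated by $\mathcal A\cup\{a(\vec x,\alpha)\}$ is contained in $b\cup a(\vec x,\alpha)$ for some finite union $b$ of elements of $\mathcal A$; in particular, such a $b$ does not depend on the Cohen generic $\vec x$. So, given a disjoint $V$-sequence $\{H_n\}\subseteq[\mathbb N]^{\leq\ell}$, it suffices to find $n$ satisfying both $H_n\cap b=\emptyset$ and $H_n\subseteq c(\vec x,\alpha)$. Since $\mathcal A$ is thin over $V$, applying the definition to each tail $\{H_n:n\geq N\}$ shows that $S_b:=\{n:H_n\cap b=\emptyset\}$ is infinite; in particular it contains elements with $\min H_n$ arbitrarily large. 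For the very thin case, one analogously finds, for any prescribed $M$, some $n>M$ with $[n,g(n)]\cap b=\emptyset$, by applying very thinness to the modification $g'\in V$ of $g$ that is constant equal to $g(M+1)$ on $[0,M]$ and agrees with $g$ on $(M,\infty)$.

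The core of the proof is a Cohen density argument. Fix $p\in\Fn(\omega_1\times\mathbb N,2)$; let $M$ be the largest integer appearing as a second coordinate of $\dom(p)$ and $B$ the (finite) set of first coordinates of $\dom(p)$. Pick $n\in S_b$ with $\min H_n>M$ and write $H_n=\{h_1<\cdots<h_s\}$. For $\alpha<\omega$, extend $p$ to $q$ by setting $(\alpha,h_i)\mapsto 1$ and $(\beta,h_i)\mapsto 0$ for all $\beta<\alpha$ and $i\leq s$; since $h_i>M$, these coordinates lie outside $\dom(p)$, so $q$ is a condition and $q\Vdash H_n\subseteq x_\alpha\setminus\bigcup_{\beta<\alpha}x_\beta=c(\vec x,\alpha)$.

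For $\alpha\geq\omega$, pick $k^*\in\omega$ with $e_\alpha(k^*)\notin B$ (possible since $B$ is finite) and then $k^*<n'_1<\cdots<n'_s$. Let $Z=\{m\leq M:p(\alpha,m)=1\}$. Extend $p$ to $q$ by (i) determining $x_\alpha\cap[0,h_s]$ to equal exactly $Z\cup H_n$, (ii) placing $Z\subseteq x_{e_\alpha(k^*)}$, (iii) placing $h_j\in x_{e_\alpha(n'_j)}$ for each $j\leq s$, and (iv) declaring $h_i\notin x_{e_\alpha(k)}$ for every $i\leq s$ and every $k<n'_i$. Each coordinate touched in (i)--(iv) either lies outside $\dom(p)$ (using $h_i>M$ in (i), (iii), (iv), and $e_\alpha(k^*)\notin B$ in (ii)) or retains $p$'s value (for entries $(\alpha,m)$ with $m\leq M$ in (i)), so $q$ is a valid extension. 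Under $q$, for every $i\leq s$, $x_\alpha\cap[0,h_i-1]=Z\cup\{h_1,\ldots,h_{i-1}\}\subseteq\bigcup_{k<n'_i}x_{e_\alpha(k)}$ by (ii) and (iii), while (iv) yields $h_i\notin\bigcup_{k<n'_i}x_{e_\alpha(k)}$. Hence $q\Vdash\min\bigl(x_\alpha\setminus\bigcup_{k<n'_i}x_{e_\alpha(k)}\bigr)=h_i$, and so $q\Vdash H_n\subseteq c(\vec x,\alpha)$. The very thin case proceeds identically with $H_n$ replaced by $[n,g(n)]$.

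The main obstacle is the case $\alpha\geq\omega$, where $c(\vec x,\alpha)$ is defined by successive minima involving all earlier generics $x_{e_\alpha(k)}$; aligning this with the arbitrary constraints $p$ places on $x_\alpha$ and on finitely many $x_\beta$ requires both the reserved cover index $k^*$ to absorb the legacy initial segment $Z$ and the strictly increasing auxiliary indices $n'_j$ to realize the $h_j$ as separate successive minima.
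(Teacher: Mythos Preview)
Your proposal is correct and follows essentially the same strategy as the paper: reduce to finding, below any given Cohen condition $p$, an extension $q$ and an index $n$ with $H_n\cap b=\emptyset$ and $q\Vdash H_n\subseteq c(\vec x,\alpha)$. The only difference is in how $q$ is built: the paper simply forces the entire interval $[m{+}1,\max(H_n)]$ into $x_\alpha$ and out of every $x_{e_\alpha(k)}$ for $k<\ell$, so that the whole interval (and in particular $H_n$) consists of consecutive minima; you instead single out the points $h_j$ one by one via the auxiliary indices $k^*$ and $n'_1<\cdots<n'_s$, explicitly absorbing the legacy set $Z$ with $k^*$. Your construction is a bit more elaborate but has the virtue of making the handling of $x_\alpha\cap[1,M]$ completely explicit, whereas the paper's assertion that a suitable $j_0\leq m$ exists is stated without that bookkeeping.
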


 \begin{proof}
   Fix any $a$ in the ideal generated by $\mathcal A$ and let
   $\{ H_n : n\in \omega\}\subset [\mathbb N]^{<\aleph_0}$ be any
   pairwise disjoint 
   family in $V$. Let 
   $p\in \Fn(\omega\times \mathbb N,2)$ be any condition and assume
   that $\{ n \in \omega : a\cap H_n \}$ is infinite. It suffices to
   prove that there is a $q$ extending $p$ and an $n$ such
   that $a\cap H_n=\emptyset$ and $q\Vdash H_n\cap \dot a(\vec
   x,\alpha)=\emptyset$.  We will skip the case when $\alpha<\omega$
   since it is easier.    Choose a finite set $F\subset \omega_1$ and
   an integer $m\in \mathbb N$ such that $\dom(p)\subset F\times
   \{1,\ldots, m\}$. By extending $F$ but not $m$, we can assume that
   $\{ e_\alpha(k) : k<m\}\subset F$ and $\dom(p) = F\times
    \{ 1,\ldots, m\}$. 
   Choose $n$ so that $m<\min(H_n)$ and
    $H_n\cap a=\emptyset$. We define an extension $q$ of $p$ that
    forces that $H_n\subset c(\vec x,\alpha)$.  
Let $\ell = \max(H_n)+m$ and $F' = F\cup \{ e_\alpha(k) : k \leq
\ell\}$.  Define $q\supset p$ so that for all
$(\beta,j) \in F'\times\{1,\ldots,\ell\}\setminus \dom(p)$,
$q(\beta,j) = 1$ if and only if $\beta=\alpha$. It is immediate
that $q\Vdash [m+1,\ell]\cap \dot x_{e_\alpha(k)} =\emptyset$ for all
$k<\ell$. Similarly, $q\Vdash [m+1,\ell]\subset \dot x_\alpha$. It
follows
that there is a $j_0\leq m$ such that $m+1$ is forced
by $q$ to be
the minimum element
of $\dot x_\alpha \setminus \bigcup\{\dot x_{ e_\alpha(k)} : k<j_0\}$.
Then, by induction on $1<j<\max(H_n)$, $m+j$ is
forced by $q$ to be the minimum element of
$\dot x_\alpha \setminus \bigcup\{\dot x_{ e_\alpha(k)} : k<j_0+j\}$. 

 \end{proof}

\begin{definition}
  For a poset $P$ and infinite set $X$,
  let $\wp(X,P)$ denote the set of canonical names
 of infinite subsets of $X$ (meaning $1_P$ forces that each
 $\dot a\in \wp 
 (X,P)$ is infinite). When $\mathcal E$ is a subset of
 $\wp(X,P)$ 
 we will use it in forcing statements   to mean the $P$-name
    $\{ (\dot a , 1_P) :   \dot a \in \mathcal E\}$.
\end{definition}

\begin{proposition} If $\{  P_i  : i < \kappa\} $ is a continuous\label{cohen}
$\cprec$-chain of ccc posets and if $\dot Q_i$ is the $P_i$-name  
for the poset $\Fn(i{+}1\times \theta \times \mathbb N, 2)$ for any
ordinal $\theta$,  
then $\langle P_i * \dot Q_i : i<\kappa\rangle$ is a continuous
$\cprec$-chain. 
If, in addition, $\{ \mathcal A_i : i<\kappa\}$ is a sequence such
that, for each $i<\kappa$, 
\begin{enumerate}
\item  
$\mathcal A_i\subset \wp(\mathbb N,P_{i+1})$,
\item $\mathcal A_i$ is forced
(by $P_{i+1}$) to be thin (respectively very thin) over 
the forcing extension by $P_i$,
\end{enumerate}
then, for each $i<\kappa$, $\mathcal A_i$ is forced to be thin (respectively
very thin) over
the forcing extension by $P_i*\Fn(i{+}1\times \theta\times \mathbb N, 2)$.
\end{proposition}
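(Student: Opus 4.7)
The statement has two halves: (i) $\langle P_i*\dot Q_i:i<\kappa\rangle$ forms a continuous $\cprec$-chain; (ii) the $\mathcal A_i$'s remain thin (respectively very thin) after the Cohen extension. For (i), since $\Fn((i{+}1)\times\theta\times\mathbb N,2)\subset\Fn((j{+}1)\times\theta\times\mathbb N,2)$ as posets in $V$ whenever $i<j$, the remark after Lemma~\ref{successorMatrix} yields $P_i*\dot Q_i\cprec P_j*\dot Q_j$. Continuity at a limit $j$ of uncountable cofinality follows from continuity of the $P_i$-chain, together with the fact that any finite Cohen condition on $(j{+}1)\times\theta\times\mathbb N$ has its first-coordinate domain bounded below $j$ (absorbing the $\{j\}$-slice into the successor stage).

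For (ii), the key move is to factor $P_{i+1}\cong P_i*\dot R_i$ and, since $\dot Q_i$ is a $P_i$-name that does not involve $\dot R_i$, to commute to get $P_{i+1}*\dot Q_i\cong (P_i*\dot Q_i)*\dot R_i$. Under this factoring the hypothesis reads: in $V^{P_i}$, $\dot R_i\Vdash\mathcal A_i$ is thin (very thin) over $V^{P_i}$; and the conclusion to prove reads: in $V^{P_i*\dot Q_i}$, $\dot R_i\Vdash\mathcal A_i$ is thin (very thin) over $V^{P_i*\dot Q_i}$. Fix a condition $(p,\dot q,\dot r)\in P_i*\dot Q_i*\dot R_i$ and a $P_{i+1}$-name $\dot a$ in the ideal generated by $\mathcal A_i$.

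For the \emph{thin} case, let $\dot{\vec H}=\langle\dot H_n:n<\omega\rangle$ be a $P_i*\dot Q_i$-name for a pairwise disjoint sequence in $[\mathbb N]^{\leq\ell}$. Working in $V^{P_i}$ below $p$, recursively build a Cohen-chain $\dot q=q_0\leq q_1\leq\cdots$ in $\dot Q_i$ and indices $n_0<n_1<\cdots$ with $q_k\Vdash_{\dot Q_i}\dot H_{n_k}=H_k^*$ for specific $H_k^*\in[\mathbb N]^{\leq\ell}$; forced pairwise disjointness of $\dot{\vec H}$ makes $\{H_k^*:k<\omega\}$ pairwise disjoint in $V^{P_i}$, hence a legitimate test sequence for thinness. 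The hypothesis then supplies an extension $r'\geq\dot r$ in $\dot R_i$ and a $k$ with $r'\Vdash H_k^*\cap\dot a=\emptyset$, and the composite $(p,q_k,r')$ forces $\dot H_{n_k}\cap\dot a=\emptyset$. For the \emph{very thin} case, given a $P_i*\dot Q_i$-name $\dot g\in\mathbb N^{\mathbb N}$, define in $V^{P_i}$ the function
\[ m_*(n)=\min\bigl\{m:\exists q\geq\dot q,\ q\Vdash_{\dot Q_i}\dot g(n)=m\bigr\}, \]
which lies in $V^{P_i}\cap\mathbb N^{\mathbb N}$. Applying the very-thin hypothesis to $m_*$ yields an extension $r'\geq\dot r$ in $\dot R_i$ and some $n_0$ with $r'\Vdash\dot a\cap[n_0,m_*(n_0)]=\emptyset$; choosing $q_1\geq\dot q$ in $\dot Q_i$ with $q_1\Vdash\dot g(n_0)=m_*(n_0)$ (possible by the very definition of $m_*$) gives a composite $(p,q_1,r')$ that forces $\dot a\cap[n_0,\dot g(n_0)]=\dot a\cap[n_0,m_*(n_0)]=\emptyset$.

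The main obstacle is the bookkeeping for the commutation $P_{i+1}*\dot Q_i\cong(P_i*\dot Q_i)*\dot R_i$ and the recombination of the $V^{P_i}$-side construction (the sequence $\{q_k,H_k^*\}$, or the function $m_*$) with the $\dot R_i$-side application of the hypothesis into a single extension of $(p,\dot q,\dot r)$; in particular, one may need to first strengthen the Cohen coordinate so that $\dot r$ is reducible to a $P_i$-name before invoking the $\dot R_i$-hypothesis in $V^{P_i}$. The commutation itself works precisely because $\dot Q_i$ is a $P_i$-name that does not mention the quotient $\dot R_i$, so Cohen-side information and $\dot R_i$-side information can be scheduled independently.
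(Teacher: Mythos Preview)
Your argument is correct; the paper states this proposition without proof, so there is nothing to compare against directly. Your approach to part (ii)---decide the test sequence (or the values of $\dot g$) along Cohen extensions so that it drops into $V^{P_i}$, then invoke the (very) thin hypothesis there via the quotient $\dot R_i$---is exactly the template the paper itself uses in the proof of Lemma~\ref{smallccc}. The commutation $(P_i*\dot R_i)*\dot Q_i\cong(P_i*\dot Q_i)*\dot R_i$ is unproblematic because $\dot Q_i$ is a ground-model poset, so over $V^{P_i}$ the two-step iteration with $\dot R_i$ in either order is forcing-equivalent to the product; this also dissolves your closing worry about recombining $q_k$ with $r'$.

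One small correction on part (i): your continuity remark is not literally right, since a finite condition in $\Fn((j{+}1)\times\theta\times\mathbb N,2)$ \emph{can} use the $\{j\}$-slice. Thus $\bigcup_{i<j}P_i*\dot Q_i$ equals $P_j*\Fn(j\times\theta\times\mathbb N,2)$, not $P_j*\dot Q_j$, and the chain is not continuous in the paper's strict sense. This is a defect of the paper's own formulation rather than of your argument; the missing $\{j\}$-slice is a single Cohen factor and the two posets are related by a dense embedding, so nothing downstream is affected.
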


\section{The tools}
\newcommand{\az}{\axiom{a}}
\newcommand{\bz}{\axiom{b}}

\begin{definition}
$\AP$ is the    set of all structures $\axiom{a} \in H(\lambda)$, where\\
$\az
 = \langle \{ P^\az_i : i <\kappa\}, \{{\mathcal A}^\az_i : i<\kappa\}\rangle $
and for each\label{ap} $i<\kappa$
 
\begin{enumerate} 
 \item the sequence $\{ P^\az_i : i<\kappa\}$ is a continuous $\cprec$-chain of ccc posets,
 \item $\mathcal A^\az_i \subset \wp(\mathbb N,P^\az_{i+1})$,
 \item $P^\az_{i+1}$ forces that the ideal generated by $\mathcal A^\az_i$
 is thin over the forcing extension by $P^\az_i$.
\end{enumerate}
\end{definition}
 
\newcommand{\APv}{\axiom{APv}}

\begin{definition}
$\APv$ is the\label{apv}   set of all structures $\axiom{a} \in
H(\lambda)$, where\\ 
$\az
 = \langle \{ P^\az_i : i  < \kappa\}, \{{\mathcal A}^\az_i : i<\kappa\}\rangle $
and for each $i<\kappa$
 
\begin{enumerate} 
 \item the sequence $\{ P^\az_i : i < \kappa\}$ is a continuous
   $\cprec$-chain of ccc posets, 
 \item $\mathcal A^\az_i \subset \wp(\mathbb N,P^\az_{i+1})$,
 \item $P^\az_{i+1}$ forces that the ideal generated by $\mathcal A^\az_i$
 is very thin over the forcing extension by $P^\az_i$.
\end{enumerate}
\end{definition}

\newcommand{\lap}[1]{\leq^{#1}_{\AP}}

\begin{definition}
 For $i<\kappa$, we let $\lap i$ be the following two place relation on $\AP$:
 \( \az \lap i \bz \)   iff  for all 
 %(
 $ j\in [i,\kappa)$:  %]
 $\az,\bz \in \AP$,
  \( P^\az_j \cprec P^\bz_j \) ,
 and  \( \mathcal A^\az_j \subset \mathcal A^\bz_j\). 
 
 Similarly we let $\lap * = \bigcup_{i<\kappa} \lap i$, i.e. $\az\lap * \bz$ if  
  $\az\lap i \bz$ for some $i<\kappa$. 
  \end{definition}

For each $\az\in \AP$, we may let $P^{\az}_\kappa = \bigcup \{ P^{\az}_i : i<\kappa\}$
and note that $P^{\az}_i \cprec P^{\az}_\kappa$ for all $i<\kappa$. Similarly,
 it   follows immediately that 
 $P^{\az}_\kappa \cprec P^{\bz}_\kappa$
 whenever  $\az \lap * \bz$.

\begin{lemma}
 Each of $\lap i$ and $\lap *$ are transitive and reflexive orders on $\AP$.
 If $i<j$, then $\lap j \subset \lap i$.  If $\az\lap i \bz$ and $\bz\lap i\az$,
 then $P^\az_j = P^\bz_j$ and $\mathcal A^\az_j = \mathcal A^\bz_j$ for all
 %(
 $j\in [i,\kappa)$.
 %]
\end{lemma}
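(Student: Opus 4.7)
The plan is to unwind the definition of $\lap i$ and reduce every claim to elementary properties of $\cprec$ and of set inclusion. The one background fact I need is that $\cprec$ is itself a preorder with the antisymmetry property that mutual $P\cprec Q$ and $Q\cprec P$ forces $P=Q$: transitivity follows by composing the three clauses defining $\cprec$ (containment of underlying sets, containment of orders, and preservation of maximal antichains), while reflexivity and this antisymmetry are immediate from the definitions.

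Reflexivity of $\lap i$ is then immediate, since $P^\az_j\cprec P^\az_j$ and $\mathcal A^\az_j\subset \mathcal A^\az_j$ hold for every $j\in[i,\kappa)$. For transitivity of $\lap i$, suppose $\az\lap i\bz$ and $\bz\lap i\axiom{c}$; at each $j\in[i,\kappa)$ I chain $P^\az_j\cprec P^\bz_j\cprec P^{\axiom{c}}_j$ using transitivity of $\cprec$, and $\mathcal A^\az_j\subset \mathcal A^\bz_j\subset \mathcal A^{\axiom{c}}_j$, giving $\az\lap i\axiom{c}$.

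The monotonicity statement between $\lap i$ and $\lap j$ for $i<j$ is an immediate consequence of the inclusion of index ranges $[j,\kappa)\subset[i,\kappa)$: a pair satisfying the universally quantified defining condition over the larger range automatically satisfies it over the smaller one. Reflexivity of $\lap *$ then comes from that of $\lap 0$, and for transitivity of $\lap *$, given $\az\lap i\bz$ and $\bz\lap{i'}\axiom{c}$ I set $k=\max(i,i')$; monotonicity yields $\az\lap k\bz$ and $\bz\lap k\axiom{c}$, so transitivity of $\lap k$ gives $\az\lap k\axiom{c}$, hence $\az\lap *\axiom{c}$.

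For the antisymmetry-style conclusion, if $\az\lap i\bz$ and $\bz\lap i\az$, then at each $j\in[i,\kappa)$ we have mutual $P^\az_j\cprec P^\bz_j$ and $P^\bz_j\cprec P^\az_j$, forcing $P^\az_j=P^\bz_j$ as ordered sets; likewise the two inclusions on $\mathcal A^\az_j$ and $\mathcal A^\bz_j$ force equality. The whole lemma is a routine bookkeeping exercise with no genuine obstacle; the only mildly substantive ingredient is the verification that $\cprec$ is transitive, achieved by composing its three defining clauses.
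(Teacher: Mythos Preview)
Your argument is correct and is precisely the routine unwinding the paper intends; the paper states this lemma without proof. One point worth making explicit: your monotonicity paragraph actually establishes $\lap i\subset\lap j$ for $i<j$ (a pair satisfying the conditions on the larger index set $[i,\kappa)$ automatically satisfies them on the smaller $[j,\kappa)$), which is the direction you then correctly use for transitivity of $\lap *$ and the direction the paper relies on elsewhere (e.g.\ passing from $\az_0\lap *\az_\alpha$ to $\az_0\lap i\az_\alpha$ for large $i$). The inclusion as printed in the lemma is reversed, so you have tacitly corrected a typo rather than proved the literal statement; it would be worth saying so explicitly.
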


Since $\APv \subset \AP$,  we do not need new relation symbols
to denote the same binary relations on $\APv$.

\begin{lemma}
 If $\az \lap i \bz_1$ for some $i<\kappa$, then there is a $\bz_2\in \AP$
 such that $\az \lap 0 \bz_2$ and $\bz \lap i \bz_2$.  Similarly,
   if $\bz_1 \in \APv$, then $\az\in \APv$ and we can choose $\bz_2\in \APv$.
\end{lemma}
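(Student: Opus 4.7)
Plan. I would build $\bz_2$ by concatenating $\az$ on the initial segment $[0,i)$ with $\bz_1$ on the tail $[i,\kappa)$. Explicitly, set $P^{\bz_2}_j = P^\az_j$ and $\mathcal A^{\bz_2}_j = \mathcal A^\az_j$ for $j<i$, and $P^{\bz_2}_j = P^{\bz_1}_j$ and $\mathcal A^{\bz_2}_j = \mathcal A^{\bz_1}_j$ for $j\ge i$. With this choice, both claimed comparisons are automatic: $\az \lap 0 \bz_2$ holds because below $i$ the coordinates of $\az$ and $\bz_2$ coincide, while above $i$ the hypothesis $\az \lap i \bz_1$ supplies $P^\az_j \cprec P^{\bz_1}_j = P^{\bz_2}_j$ and $\mathcal A^\az_j \subset \mathcal A^{\bz_1}_j = \mathcal A^{\bz_2}_j$; and $\bz_1 \lap i \bz_2$ is trivial since $\bz_1$ and $\bz_2$ agree on $[i,\kappa)$.

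What remains is to verify $\bz_2 \in \AP$. The $\cprec$-chain property is inherited within each piece, and at the join, for $j_1<i\le j_2$, transitivity of $\cprec$ gives $P^{\bz_2}_{j_1}=P^\az_{j_1} \cprec P^\az_i \cprec P^{\bz_1}_i \cprec P^{\bz_1}_{j_2}=P^{\bz_2}_{j_2}$, using $\az \lap i \bz_1$ at the middle link. Clause (2) of Definition~\ref{ap} is immediate. Clause (3) is inherited from $\az$ on $[0,i-1)$ and from $\bz_1$ on $[i,\kappa)$; the only boundary check is at $j=i-1$ when $i$ is a successor, where one observes that because $P^\az_i \cprec P^{\bz_1}_i = P^{\bz_2}_i$, the thinness of $\mathcal A^\az_{i-1}$ forced by $P^\az_i$ is still forced by $P^{\bz_1}_i$.

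The main obstacle is the continuity condition at $j=i$ when $\cf(i)>\omega$: with the naive concatenation one has $P^{\bz_2}_i = P^{\bz_1}_i$ while $\bigcup_{k<i} P^{\bz_2}_k = \bigcup_{k<i} P^\az_k = P^\az_i$, and these two posets need not coincide. I would fix this by enlarging the lower portion of the chain: for each $k<i$, replace $P^\az_k$ by a ccc $\cprec$-subposet $P^{\bz_2}_k$ of the ambient $P^{\bz_1}_i$ that contains both $P^\az_k$ and $P^{\bz_1}_k$ as complete suborders, choosing these recursively so that $\{P^{\bz_2}_k : k<i\}$ is a continuous $\cprec$-chain with union $P^{\bz_1}_i$. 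Since $P^\az_k,P^{\bz_1}_k \cprec P^{\bz_1}_i$, the needed amalgamation at each step exists inside the ccc poset $P^{\bz_1}_i$, and Proposition~\ref{staythin} ensures that thinness of $\mathcal A^{\bz_2}_k$ is preserved under the enlargement.

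The $\APv$ version runs in parallel with \emph{thin} replaced by \emph{very thin} throughout, using Proposition~\ref{staythin} in its very-thin form. The side assertion $\az \in \APv$ is immediate on $[i,\kappa)$ because $\mathcal A^\az_j \subset \mathcal A^{\bz_1}_j$ and very-thinness descends along the inclusion $V[P^\az_j] \subset V[P^{\bz_1}_j]$ coming from $P^\az_j \cprec P^{\bz_1}_j$; on $[0,i)$ the corresponding property is read from the setting in which the lemma is applied.
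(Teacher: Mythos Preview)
The paper offers no proof of this lemma. Your concatenation is the natural construction, and your verifications of the $\cprec$-chain, of clauses (2)--(3), and of the two comparison relations are correct whenever $\cf(i)\le\omega$. You have also correctly isolated the one obstruction: continuity of the chain at $j=i$ when $\cf(i)>\omega$, where the naive choice gives $\bigcup_{k<i}P^{\bz_2}_k=P^{\az}_i$ rather than $P^{\bz_1}_i$.

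Your repair, however, has genuine gaps. First, the amalgamation you invoke is not available in general: two complete suborders $P^{\az}_k,\,P^{\bz_1}_k\cprec P^{\bz_1}_i$ need not possess a common $\cprec$-extension strictly inside $P^{\bz_1}_i$ (complete suborders of a fixed poset do not form a lattice), so ``the needed amalgamation at each step exists'' is unsupported. Second, even if you could enlarge the $P^{\bz_2}_k$ for $k<i$ so as to exhaust $P^{\bz_1}_i$, clause~(3) of Definition~\ref{ap} would then demand that $\mathcal A^{\az}_k$ be thin over the extension by the \emph{enlarged} $P^{\bz_2}_k$, and thinness over a larger inner model is a strictly stronger condition---there are more test sequences $\{H_n\}$ to handle---which nothing in the hypothesis provides. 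Proposition~\ref{staythin} is not applicable here: it shows that adjoining the particular sets $a(\vec x,\alpha)$ to $\mathcal A$ preserves thinness over a \emph{fixed} inner model after Cohen forcing; it does not address enlarging the inner model over which thinness is measured. Finally, the side assertion $\az\in\APv$ is not established at levels $j<i$: the hypothesis $\az\lap i\bz_1$ carries no information there, and ``read from the setting in which the lemma is applied'' is not a proof.
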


\begin{definition}
 For any $i<\kappa$ and
 ordinal $\delta$, a sequence $\langle \az_\alpha : \alpha < \delta\rangle$
 is a $\lap i$-increasing continuous chain if for all $\alpha < \beta <\delta$
% ( 
 and $j\in [i,\kappa)$:
 %]
\begin{enumerate}
 \item $\az_\alpha \lap i \az_\beta$,
 \item the chain  $\{ P^{\az_\alpha}_j  : \alpha <\delta \}$ is strongly continuous, and,
  \item if $\alpha$ is a limit, then $\mathcal A^{\az_\alpha}_j = \bigcup
\{    \mathcal A^{\az_\xi}_j : \xi<\alpha\}$.
\end{enumerate}

 \end{definition}

\begin{lemma}
Suppose that  
  $\{\az_\alpha : \alpha < \delta\}$ is a $\lap *$-chain for some limit
  ordinal $\delta<\lambda$ and that there is a cub $C\subset\delta$
  and  an  $i<\kappa$  such that
    $\{ \az_\alpha : \alpha\in C\}$ is a
  $\lap i$-increasing continuous chain.  Then 
 there\label{ctschain} is an $\az_\delta\in \AP$ so that
 
\begin{enumerate}
\item  $\{ \az_\alpha : \alpha\in C\cup \{\delta\}\}  $ is also a $\lap i$-increasing
     continuous chain,
     \item 
 $\az_{\min(C)}\lap 0\az_\delta$, and
 \item   $\az_\alpha \lap * \az_\delta$ for all $\alpha < \delta$.
    \end{enumerate}
      If $i=0$, then $\{ \az_\alpha : \alpha \in C\}$ uniquely determines $\az_\delta$.
\end{lemma}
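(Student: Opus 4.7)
The plan is to build $\az_\delta$ by taking unions along the cofinal chain $C \subset \delta$ at the levels where the chain is coherent. The lemma's conclusion (1) forces, for each $j \in [i,\kappa)$,
\[
P^{\az_\delta}_j := \bigcup_{\alpha \in C} P^{\az_\alpha}_j \quad\text{and}\quad \mathcal A^{\az_\delta}_j := \bigcup_{\alpha \in C} \mathcal A^{\az_\alpha}_j,
\]
since $\{\az_\alpha : \alpha \in C \cup \{\delta\}\}$ must be strongly continuous at the new limit $\delta$ for every such $j$. Below level $i$, where the elements of $C$ need not agree, I would take the minimal choice compatible with conclusion (2), namely $P^{\az_\delta}_j := P^{\az_{\min(C)}}_j$ and $\mathcal A^{\az_\delta}_j := \mathcal A^{\az_{\min(C)}}_j$ for $j < i$.

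I would next verify that $\az_\delta \in \AP$. For each $j \in [i,\kappa)$, $P^{\az_\delta}_j$ is a union of a $\cprec$-chain of ccc posets along $C$, so it is itself ccc and contains each $P^{\az_\alpha}_j$ as a complete suborder by the standard counting argument. The $\cprec$-chain and continuity structure of $\{P^{\az_\delta}_j\}_{j<\kappa}$ in the $j$-coordinate is routine for $j<i$ (inherited from $\az_{\min(C)}$) and for $j>i$ by commuting the unions over $C$ and over $j' < j$ using continuity of each individual $\az_\alpha$ in $j$. The boundary case $j=i$ with $\cf(i)>\omega$ is the most delicate and requires reconciling the low-level definition with the union along $C$ at level $i$ using continuity of each $\az_\alpha$ at $i$.

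The main obstacle is the thinness clause of Definition \ref{ap}: one must show that $P^{\az_\delta}_{j+1}$ forces $\mathcal A^{\az_\delta}_j$ to be thin over $V[P^{\az_\delta}_j]$ at each $j \in [i,\kappa)$. Given $a$ in the ideal of $\mathcal A^{\az_\delta}_j$, $a$ is a finite union of members drawn from finitely many $\mathcal A^{\az_{\alpha_k}}_j$ for $\alpha_k \in C$; by the $\lap i$-increasing property, $a$ already lies in the ideal of $\mathcal A^{\az_\beta}_j$ for any $\beta \in C$ above all the $\alpha_k$. Given a $P^{\az_\delta}_j$-name for a pairwise disjoint, $\ell$-bounded sequence $\{H_n\}$, one uses ccc to reflect its countably many defining antichains into some $P^{\az_{\beta'}}_j$ (by cofinality when $\cf(\delta)>\omega$, or by arguing on individual $H_n$'s when $\cf(\delta)=\omega$ and choosing $n$ so that $H_n$ is decided by a stage in $C$ above the source of $a$), and then invokes thinness of $\mathcal A^{\az_{\beta''}}_j$ over $V[P^{\az_{\beta''}}_j]$ for $\beta''=\max(\beta,\beta')$ in $C$ to extract the required disjoint $H_n$.

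Conclusions (1)--(3) of the lemma are then bookkeeping. Conclusion (1) is built into the construction. Conclusion (2) holds because $P^{\az_{\min(C)}}_j = P^{\az_\delta}_j$ for $j<i$ while $P^{\az_{\min(C)}}_j \cprec \bigcup_{\alpha\in C} P^{\az_\alpha}_j$ for $j \geq i$ by the $\cprec$-chain structure along $C$, and analogously for the $\mathcal A$-inclusions. Conclusion (3) for $\alpha \notin C$ follows by transitivity: pick $\beta \in C$ with $\alpha < \beta$ and compose $\az_\alpha \lap * \az_\beta \lap i \az_\delta$. The uniqueness at $i=0$ is immediate, since then the union definition governs every level.
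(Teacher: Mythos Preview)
The paper states this lemma without proof, so there is nothing to compare your argument against; the authors evidently regard it as routine. Your construction is the natural and correct one: take unions along $C$ at levels $j\in[i,\kappa)$ and copy $\az_{\min(C)}$ at levels $j<i$. The verifications of the $\cprec$-chain structure, of conclusions (1)--(3), and of uniqueness when $i=0$ are as you describe.

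The one place your sketch is too quick is the thinness clause when $\cf(\delta)=\omega$. Your suggestion to ``argue on individual $H_n$'s and choose $n$ so that $H_n$ is decided by a stage in $C$ above the source of $a$'' does not work as stated: even a single canonical $P^{\az_\delta}_j$-name $\dot H_n$ for a finite set is determined by countably many conditions, which may be spread across cofinally many $P^{\az_\alpha}_j$ when $|C|=\omega$, so there is no reason any particular $\dot H_n$ is (equivalent to) a $P^{\az_\alpha}_j$-name for a single $\alpha\in C$. The uniform argument that does work mirrors the proof of Lemma~\ref{smallccc}: fix $\gamma\in C$ so that the given condition $p$ lies in $P^{\az_\gamma}_{j+1}$ and $\dot a$ is in the ideal of $\mathcal A^{\az_\gamma}_j$; pass to a $P^{\az_\gamma}_{j+1}$-generic $G^\gamma_{j+1}$ containing $p$, and in $V[G^\gamma_j]$ recursively select specific values $H'_n$ together with witnesses $q_n$ in the quotient $P^{\az_\delta}_j/G^\gamma_j$ forcing $\dot H_n=H'_n$, arranging the $H'_n$ to be pairwise disjoint. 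The sequence $\{H'_n\}$ now lies in $V[G^\gamma_j]$, so thinness at stage $\gamma$ gives an $n$ with $H'_n\cap a=\emptyset$. The remaining step is to amalgamate the witness $p''\in G^\gamma_{j+1}$ for this with $q_n$ inside $P^{\az_\delta}_{j+1}$; here one uses that both live in $P^{\az_{\gamma'}}_{j+1}$ for a single $\gamma'\in C$, and a short compatibility check there (this is the place where the union structure of $P^{\az_\delta}_{j+1}$, rather than an abstract $\cprec$-square, is needed).
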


\begin{lemma} If
 $\{ \az_\alpha :\alpha  < \lambda\}$ is a $\lap *$-increasing chain
 from $\APv$
 and  if $\mathcal A^{\az_0}_i \neq \emptyset$ for all $i<\kappa$,
 then   the ccc forcing extension by\label{bkappa} 
   $P = \bigcup\{ P^{\az_\alpha}_\kappa:\alpha < \lambda\}$ satisfies that
   $\mathfrak b  \leq \kappa$.
\end{lemma}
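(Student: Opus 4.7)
The plan is to exhibit, inside the extension by $P$, an unbounded family in ${\mathbb N}^{\mathbb N}$ of size $\kappa$, thereby forcing $\mathfrak b\leq\kappa$. For each $i<\kappa$, using the hypothesis that $\mathcal A^{\az_0}_i\neq\emptyset$, fix some $\dot a_i\in \mathcal A^{\az_0}_i$ (a $P^{\az_0}_{i+1}$-name, hence also a $P$-name, for an infinite subset of $\mathbb N$), and let $\dot f_i$ be the $P$-name for the function $n\mapsto \min(\dot a_i\setminus n)$, i.e.\ the least element of $\dot a_i$ that is $\geq n$. The key trivial observation is that for any infinite $a\subset\mathbb N$ and any $g\in{\mathbb N}^{\mathbb N}$, one has $f_a(n)>g(n)$ if and only if $a\cap[n,g(n)]=\emptyset$. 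The claim is that $\{\dot f_i:i<\kappa\}$ is forced by $P$ to be an unbounded family.

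Let $\dot g$ be an arbitrary $P$-name for an element of ${\mathbb N}^{\mathbb N}$. The $\lap *$-increasingness of the chain yields $P=\bigcup_{\alpha<\lambda}P^{\az_\alpha}_\kappa$ as an increasing union (since for $\alpha<\beta$ one has $\az_\alpha \lap i \az_\beta$ for some $i<\kappa$, which gives $P^{\az_\alpha}_j\subset P^{\az_\beta}_j$ for all $j\geq i$), and by the definition of $\APv$ each $P^{\az_\alpha}_\kappa$ is the ascending continuous union of the $P^{\az_\alpha}_j$. Since $P$ is ccc, $\dot g$ has countable support in $P$; combined with $\cf(\lambda)=\lambda>\aleph_0$ and $\cf(\kappa)=\kappa>\aleph_0$, this support lies in some single $P^{\az_\beta}_{j_0}$. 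Pick $i_0<\kappa$ witnessing $\az_0\lap{i_0}\az_\beta$ and set $j=\max(j_0,i_0)$. Then $\dot g$ is a $P^{\az_\beta}_j$-name, and $\dot a_j\in \mathcal A^{\az_0}_j\subset \mathcal A^{\az_\beta}_j$.

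Because $\az_\beta\in\APv$, the forcing $P^{\az_\beta}_{j+1}$ forces the ideal generated by $\mathcal A^{\az_\beta}_j$, and hence the single set $\dot a_j$, to be very thin over the extension by $P^{\az_\beta}_j$. Applying this directly with the realization $g$ of $\dot g$ (which lies in the $P^{\az_\beta}_j$-extension) produces at least one $n$ with $f_{a_j}(n)>g(n)$. To upgrade ``some $n$'' to ``infinitely many $n$,'' for each $N\in\mathbb N$ apply very thinness to the shifted function $g_N(n):=g(n+N+1)+N+1$, again in the $P^{\az_\beta}_j$-extension; if $n^*$ witnesses very thinness for $g_N$, then $m:=n^*+N+1>N$ satisfies $f_{a_j}(m)\geq f_{a_j}(n^*)>g_N(n^*)\geq g(m)$. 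Letting $N$ vary produces arbitrarily large such $m$, so $g$ fails to eventually dominate $f_{a_j}$, as required. The delicate step is the capture argument — showing that the countable support of $\dot g$ actually sits inside a single $P^{\az_\beta}_{j_0}$ — where one must combine the assumed ccc of $P$ with the regularity of both $\kappa$ and $\lambda$ and the continuity built into each member of $\APv$.
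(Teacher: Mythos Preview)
Your proof is correct and follows essentially the same approach as the paper's: pick $\dot a_i\in\mathcal A^{\az_0}_i$, capture the name $\dot g$ inside some $P^{\az_\beta}_j$ via the ccc and the regularity of $\kappa,\lambda$, and invoke very thinness of $\dot a_j$ over the $P^{\az_\beta}_j$-extension. The only cosmetic differences are that the paper uses the increasing enumeration of $\dot a_i$ rather than your $n\mapsto\min(\dot a_i\setminus n)$, and glosses over the shift argument you spell out for producing infinitely many witnesses.
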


\begin{proof}
 For each $i<\kappa$, choose any $\dot a_i\in \mathcal A^{\az_0}_i$
 and let $\dot f_i$ denote the order-preserving enumeration function
 from $\mathbb N$ onto
  $\dot a_i$. Note that $n\leq \dot f_i(n)$ for all $n\in \mathbb N$.
 Let $\dot g$ be any $P$-name of an element of 
   $\mathbb N^{\mathbb N}$.  
   Since $P$ is ccc, we can assume
   that $\dot g$ is a countable name. Choose any $\alpha\in \lambda$
   so that  
   $\dot g$ is a $P^{\az_\alpha}_\kappa$-name. Then 
    similarly choose $i_0<\kappa$ so that $\dot g$ 
   is a $P^{\az_\alpha}_{i_0}$-name. 
   Since $\az_0 \lap * \az_\alpha$ we may choose an $i>i_0$ so that
    $\az_0 \lap i \az_\alpha$. 
   Now we show that no condition $p\in P$   forces that $\dot f_i <^* \dot g$.
      Since $P^{\az}_{i+1} \cprec P$ and each of $\dot f_i$ and $\dot g$
   are $P^{\az}_{i+1}$-names, it suffices to prove that if $p\in P^{\az}_{i+1}$
   then, for any $n_0$ there is an extension $p'$ of $p$
   and an $n>n_0$ so that 
   $p'\forces{P^{\az_\alpha}_{i+1}} {\dot f_i(n) > \dot g(n)}$.
Since $\dot a_i\in \mathcal A^{\az_\alpha}_i$ and $\az_\alpha\in \APv$,
 there is a such a $p'$ and $n$ such that $p'\forces
  {P^{\az}_{i+1}}{\dot a_i \cap [n,\dot g(n)]=\emptyset}$. 
 There is no loss to assuming that $p'$ decides the value of the finite set
    $\{ k  < n : \dot f_i (k) < n \}$. If this set is empty, let $m=1$,
    otherwise, let $m$ be the maximum value.  Clearly $m < n$
    and we now have that $p'$ forces $\dot f_i(m+1) > \dot g(n)$.
    Since $\dot f_i$ is an increasing function, $p'\forces{P^{\az_\alpha}_i}{
       \dot f_i(n) > \dot g(n)}$ as required. 
\end{proof}

\begin{lemma}
 If $\{ \az_\alpha :\alpha  < \lambda\}$ is a $\lap *$-increasing chain
 then   the ccc forcing extension by\label{nopseudo} 
   $P = \bigcup\{ P^{\az_\alpha}_\kappa:\alpha < \lambda\}$ satisfies that
   if $\mathcal U\subset \wp(\mathbb N, P)$  is
   such that  $\{ i < \kappa : \mathcal U\cap \mathcal A_i\neq\emptyset\}$ has cardinality
    $\kappa$ for some $\alpha < \lambda$,
     then $\mathcal U$ does not have a pseudo-intersection.
\end{lemma}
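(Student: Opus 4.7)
The plan is a reflection argument followed by a single application of the thinness property from the definition of $\AP$. Suppose toward a contradiction that some condition forces a canonical $P$-name $\dot b$ to be a pseudo-intersection of $\mathcal U$, and let $\alpha<\lambda$ be the witness from the hypothesis, so that $S:=\{i<\kappa:\mathcal U\cap \mathcal A^{\az_\alpha}_i\neq\emptyset\}$ has cardinality $\kappa$, hence (by regularity of $\kappa$) is cofinal in $\kappa$.

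First I would reflect $\dot b$ down. Because each $P^{\az_\gamma}_\kappa$ is ccc, the canonical name $\dot b$ is determined by countably many conditions of $P=\bigcup_{\gamma<\lambda}\bigcup_{i<\kappa}P^{\az_\gamma}_i$; since $\kappa,\lambda$ are both regular of cofinality $>\aleph_0$, there exist $\beta<\lambda$ and $i^{*}<\kappa$ with every such condition lying in $P^{\az_\beta}_{i^{*}}$. Replacing $\beta$ by $\max\{\alpha,\beta\}$ and using $\az_\alpha\lap * \az_\beta$, pick $j_0<\kappa$ with $\az_\alpha \lap{j_0}\az_\beta$; then for every $i\geq j_0$ we have $\mathcal A^{\az_\alpha}_i\subset \mathcal A^{\az_\beta}_i$ and $P^{\az_\alpha}_i\cprec P^{\az_\beta}_i$. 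Choose $i\in S$ with $i\geq \max\{i^{*},j_0\}$, which is possible because $S$ is cofinal in $\kappa$, and pick $\dot a_i\in \mathcal U\cap \mathcal A^{\az_\alpha}_i\subset \mathcal A^{\az_\beta}_i$. Since $P^{\az_\beta}_{i^*}\cprec P^{\az_\beta}_i$, the name $\dot b$ may legitimately be read as a $P^{\az_\beta}_i$-name, and $\dot a_i$ is a $P^{\az_\beta}_{i+1}$-name.

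Now I apply thinness. In $V[G_i]$, where $G_i$ is generic for $P^{\az_\beta}_i$, the set $\dot b[G_i]$ is infinite; enumerate it increasingly as $\{b_n:n\in\omega\}$ and set $H_n=\{b_n\}$, obtaining a pairwise disjoint sequence in $[\mathbb N]^{\leq 1}\cap V[G_i]$. By Definition \ref{ap}, $P^{\az_\beta}_{i+1}$ forces that the ideal generated by $\mathcal A^{\az_\beta}_i$ is thin over $V[G_i]$, so in $V[G_{i+1}]$, applying the thinness property to each tail $\{H_n:n\geq m\}$ and to $\dot a_i[G_{i+1}]$ produces arbitrarily large $n$ with $b_n\notin \dot a_i[G_{i+1}]$. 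Thus $\dot b[G_{i+1}]\setminus \dot a_i[G_{i+1}]$ is infinite. Because $\dot b$ and $\dot a_i$ are names in $P^{\az_\beta}_{i+1}$, their interpretations in the full $P$-extension agree with their interpretations in $V[G_{i+1}]$, so $\dot b\setminus \dot a_i$ is forced to be infinite in $V^P$, contradicting that $\dot b$ is a pseudo-intersection of $\mathcal U\ni \dot a_i$.

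The only real work is the reflection of $\dot b$ to a $P^{\az_\beta}_{i^*}$-name; once that is done, everything is straightforward since the thinness axiom of $\AP$ was tailor-made to spoil single-set pseudo-intersections via the sequence of singletons of $\dot b$. I expect the main point to double-check is that the cofinalities of $\kappa$ and $\lambda$ both exceed $\aleph_0$ (which holds since $\aleph_1<\kappa<\lambda$ are regular), so the countable support of $\dot b$ really is bounded in both indices; and that antichains from $P^{\az_\beta}_{i^*}$ remain antichains in $P$ and vice versa for subsets of $P^{\az_\beta}_{i^*}$, which follows from $P^{\az_\beta}_{i^*}\cprec P$.
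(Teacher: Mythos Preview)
Your proof is correct and follows essentially the same route as the paper's: reflect the countable canonical name $\dot b$ down to some $P^{\az_\beta}_{i^*}$ with $\beta\geq\alpha$, pick $j_0$ with $\az_\alpha\lap{j_0}\az_\beta$, choose $i\in S$ above $\max\{i^*,j_0\}$, and apply the thinness of $\mathcal A^{\az_\beta}_i$ over $V[G_i]$ to the singletons of $\dot b$. Your write-up is actually more careful than the paper's in one respect---you explicitly apply thinness to every tail $\{H_n:n\geq m\}$ to conclude $\dot b\setminus\dot a_i$ is \emph{infinite}, whereas the paper just says ``$\dot b$ is not a subset'' and leaves that step implicit. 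One small bookkeeping point: when you replace $\beta$ by $\max\{\alpha,\beta\}$ you may also need to increase $i^*$ (since $P^{\az_{\beta_{\text{old}}}}_{i^*}\cprec P^{\az_{\beta_{\text{new}}}}_i$ is only guaranteed for $i$ above the index witnessing $\az_{\beta_{\text{old}}}\lap * \az_{\beta_{\text{new}}}$); the paper sidesteps this by choosing $\beta\geq\alpha$ from the start and then finding $i_\beta$.
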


\begin{proof}
Note that $P^{\az}_\kappa \cprec P$ for all $\alpha <\lambda$. 
 Let $\mathcal U\subset \wp(\mathbb N, P)$ and assume that 
 $\{ i< \kappa : \mathcal U\cap \mathcal A^{\az_\alpha}_i \neq \emptyset \}$ is 
 cofinal in $\kappa$.  Let $\dot b $ be any 
 canonical $P$-name of a subset of $\mathbb N$. 
 Choose $\alpha \leq 
 \beta<\lambda$ such that $\dot b$ is a $P^{\az_\beta}_\kappa$-name.
 Since $P^{\az}_\beta =\bigcup \{ P^{\az_\beta}_i : i<\kappa\}$, there is
 an $i_\beta < \kappa$ so that $\dot b$ is a $P^{\az_\beta}_{i_\beta}$-name.
 Choose $i<\kappa$ so that $\az_\alpha \lap i \az_\beta$. Now choose
 any $ j<\kappa$ so that $i,i_\beta < j $ and $\mathcal U\cap \mathcal 
 A^{\az_\alpha}_{j}$ is
not empty.   Since $\mathcal A^{\az_\alpha}_j\subset \mathcal A^{\az_\beta}_j$,
$\mathcal A^{\az_\alpha}_j$ is forced by $P^{\az_\beta}_{j+1}$ to be thin
over the forcing extension by $P^{\az_\beta}_j$. In particular, 
  $P^{\az_\beta}_j$ forces that 
  $\dot b$ is not   a subset of any element
 of $\mathcal U\cap \mathcal A^{\az_\alpha}_j$.  Since $P^{\az_\beta}_j\cprec
  P^{\az_\beta}_\kappa \cprec P$, this is also forced by $P$.
\end{proof}

By Proposition \ref{cohen} we can make the following definition.

\begin{definition}
 For any $\az\in \AP$ (or $\az\in \APv$) and 
 ordinal\label{cohenstep} 
  $\theta<\lambda$
 say that $\bz\in\AP$ is the
  Cohen$^\theta$-extension of $\az$ if, for
 each $i<\kappa$, 
\begin{enumerate}
 \item $P^{\bz}_i = P^{\az}_i *\Fn(i{+}1\times \theta\times \mathbb N, 2)$
 \item $\mathcal A^{\bz}_i = \mathcal A^{\az}_i$.
\end{enumerate}
\end{definition}

\begin{lemma}
 If $\az\in \AP$ (respectively $\az\in \APv$) and $\dot Q\in H(\lambda)$ is a 
  $P^{\az}_i$-name of a poset that is forced by $P^{\az}_\kappa$ to be 
   ccc, then\label{smallccc}
    there is a $\bz \in \AP$ (respectively $\bz\in \APv$) 
   such that $\az\lap 0 \bz$ and $P^{\bz}_\kappa = P^{\az}_\kappa*\dot Q$.
\end{lemma}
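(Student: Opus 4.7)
The plan is to define $\bz$ by setting $P^{\bz}_j = P^{\az}_j$ for $j \leq i$ and $P^{\bz}_j = P^{\az}_j * \dot{Q}$ for $j > i$ (treating $\dot Q$, originally a $P^{\az}_i$-name, as a $P^{\az}_j$-name via $P^{\az}_i \cprec P^{\az}_j$), and $\mathcal{A}^{\bz}_j = \mathcal{A}^{\az}_j$ throughout. Then $P^{\bz}_\kappa = P^{\az}_\kappa * \dot Q$ follows from continuity of the $\az$-chain, and the relation $\az \lap 0 \bz$ is built in. Continuity of $\{P^{\bz}_j\}$ at a limit $j > i$ of uncountable cofinality reduces, after absorbing the initial segment $P^{\az}_i$, to the identity $(\bigcup_{i<k<j} P^{\az}_k) * \dot Q = P^{\az}_j * \dot Q$, and the $\cprec$-chain property at each successor step from $i{+}1$ onward is Lemma \ref{successorMatrix} applied with $\dot A = \dot B = \dot Q$. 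Each $P^{\bz}_j$ is ccc because ccc-ness of $\dot Q$ in $V^{P^{\az}_\kappa}$ descends to $V^{P^{\az}_j}$: a potential uncountable antichain of $\dot Q$ in $V^{P^{\az}_j}$ would remain uncountable in $V^{P^{\az}_\kappa}$ since ccc extensions preserve $\aleph_1$.

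Thinness of $\mathcal{A}^{\bz}_j$ is immediate for $j < i$, and for $j = i$ it holds because $V^{P^{\bz}_i} = V^{P^{\az}_i}$ and any element of the ideal (now computed in the larger $V^{P^{\az}_{i+1} * \dot Q}$) is still a subset of a finite union of elements of $\mathcal{A}^{\az}_i \subset V^{P^{\az}_{i+1}}$, so the thinness hypothesis for $\az$ at level $i$ applies unchanged. The crux is the case $j > i$, where I need to show that $P^{\az}_{j+1} * \dot{Q}$ forces $\mathcal{A}^{\az}_j$ to be thin (respectively very thin) over $V^{P^{\az}_j * \dot{Q}}$, knowing only that $P^{\az}_{j+1}$ forces it thin over the smaller model $V^{P^{\az}_j}$.

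I would isolate this as a general preservation lemma: if $\mathcal{A}$ is thin (very thin) over $W$ in some extension $W' \supset W$, and $\dot Q \in W$ is a ccc poset in $W'$, then after forcing with $\dot Q$ over $W'$ the family $\mathcal A$ remains thin (very thin) over $W[H]$ in $W'[H]$. This is the main obstacle. I would prove the thin version by contradiction: given a pairwise disjoint $\{H_n\} \in W[H]$ of $\leq\ell$-element sets with $\dot Q$-name $\dot{\vec H} \in W$ and $a \in W'$ in the ideal with $H_n \cap a \neq \emptyset$ for all $n$, fix $q_1 \in H$ forcing (over $W'$) both disjointness of $\dot{\vec H}$ and $\forall n,\ \dot H_n \cap \check a \neq \emptyset$. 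By ccc of $\dot Q$ in $W$, for each $n$ the set $\mathcal F_n$ of possible values of $\dot H_n$ below $q_1$ is a countable member of $W$, and every $F \in \mathcal F_n$ must intersect $a$ (because any $q \leq q_1$ forcing $\dot H_n = F$ also forces $F \cap \check a \neq \emptyset$, which is absolute). Working entirely in $W$, a recursive choice of decreasing conditions $r_n \leq r_{n-1}$ each deciding $\dot H_n$ produces a pairwise disjoint sequence $\{F_n\} \in W$ with $F_n \in \mathcal F_n$ and every $F_n \cap a \neq \emptyset$, directly contradicting thinness of $\mathcal A$ over $W$ in $W'$. The very thin version is parallel: replace $\mathcal F_n$ by the countable $M_n \in W$ of possible values of $\dot g(n)$, note that every $m \in M_n$ satisfies $a \cap [n,m] \neq \emptyset$, and pick any $g^* \in W$ with $g^*(n) \in M_n$ to obtain a witness in $W$ contradicting very thinness.
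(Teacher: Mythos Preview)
Your argument is correct and follows essentially the same route as the paper's: both define $\bz$ by appending $\dot Q$ to $P^{\az}_j$ from level $i$ onward (you split at $j\leq i$ versus $j>i$, the paper at $j<i$ versus $j\geq i$; either works) while keeping $\mathcal A^{\bz}_j=\mathcal A^{\az}_j$, and both reduce the crucial thinness check at $j\geq i$ to producing, in the smaller model $V^{P^{\az}_j}$, a pairwise disjoint sequence of finite sets each meeting $a$, obtained by deciding the $\dot H_n$ below the given $Q$-condition. Your use of a single descending chain of conditions (which yields disjointness for free via absoluteness from the $W'$-forcing relation) and your selector $g^*(n)\in M_n$ for the very-thin case are mild streamlinings of the paper's direct construction, but the underlying idea is identical.
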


\begin{proof}
 We  define $\bz$ as follows.  Set $\mathcal A^{\bz}_j = \mathcal A^{\az}_j$
 for all $j<\kappa$. For $j<i$, let $P^{\bz}_j = P^{\az}_j$, and for
   $j\geq i$, let $P^{\bz}_j = P^{\az}_j * \dot Q$. By Proposition
   \ref{successorMatrix}, we have that $\{ P^{\bz}_j : j<\kappa\}$ is
    a continuous $\cprec$-chain.  By assumption, $P^{\bz}_j * \dot Q$ is ccc
    for all $j<\kappa$. Now we check that $\mathcal A^{\bz}_j$ is forced
    by $P^{\bz}_{j+1}$ to be thin (respectively very thin) over the forcing
    extension by $P^{\bz}_j$. For $j<i$ this is immediate. 
    
    Now assume that
     $i\leq j$ and that $\{ \dot H_n : n\in \omega\}$ is a sequence of
     $P^{\bz}_j$-names  that are forced to be pairwise disjoint 
     subsets of $[\mathbb N]^\ell$ (for some $\ell\in\omega$)
  and that $\dot g$ is a $P^{\bz}_j$-name of an
      element of $\mathbb N^{\mathbb N}$.  Let   $\dot a$
      be any name from $\mathcal A^{\az}_j$.
      Let $(p,q)$ be any
     condition in $ P^{\az}_{j+1}*\dot Q=P^{\bz}_{j+1} $. 
     We show that 
     $\bz\in \AP$ by showing that for some $n\in \omega$,
     $(p,q)$ has an extension forcing that $\dot H_n \cap \dot a$ is empty.
 We similarly show that if $\az\in \APv$, then  for some $n\in \omega$, 
  $(p,q)$ has an extension forcing that $[n,\dot g(n)]\cap \dot a$ is empty. 
 
   It will be convenient to pass to the forcing extension by $P^{\az}_{j+1}$
   so let $p\in G_{j+1}$ be a generic filter for $P^{\az}_{j+1}$.  Let $Q$
   denote the interpretation of $\dot Q$ by $G_j= G_{j+1}\cap P^{\az}_j$.
   We are now working in the extension $V[G_j]$. 
     Recursively
   define a sequence $\{ H_n' : n\in \mathbb N \} \subset [\mathbb N]^\ell$
   and   values $\{ m_n : n\in \mathbb N \}\subset\mathbb N$  
   so that, for each $n$, $\max(H_n') < \min(H_{n+1}')$, 
  and there is a
   condition $q_n\in Q$ stronger than $q$ such that, for some $r_n\in
    G_j$,  $(r_n,q_n) \forces{P^{\bz}_j}
     {H_n' = \dot H_n \ \mbox{and } \  \dot g(n) = m_n}$.
     Since 
    $\{ H_n' : n\in \omega\}$ is a pairwise disjoint sequence in 
      the forcing extension by $P^{\az}_j$, there is a $p'\in G_{j+1}$
      (stronger than $p$) and an $n\in \omega$ such that 
       $p'$ forces that $H_n'$ is disjoint from $a$. Since $G_{j+1}$
       is a filter, we may also assume that $p'$ is stronger than $r_n$.
       Now
       we have that $(p',q_n)$ is stronger than $(p,q)$ and
        $(p',q_n) \forces{P^{\bz}_{j+1}}{\dot H_n\cap \dot a = \emptyset}$.
    Similarly,  if $\az\in \APv$,
    there is an $n\in\mathbb N$  and a $p'\in G_{j+1}$ stronger
    than each of $p$ and $r_n$ such    that $p'$ forces
    that $\dot a$ is disjoint from $[n,m_n]$. This ensures
    that $(p',q_n)\forces{P^{\bz}_{j+1}}{[n,\dot g(n)]\cap \dot a=\emptyset}$. 
\end{proof}

Note that it follows from Lemma \ref{smallccc} that if $\az\in \AP$
and if $\{\dot Q_i : i<\kappa\}\in H(\lambda)$ is a sequence 
such that $\dot Q_i$ is a $P^{\az}_i$-name with $\{ 
 P_i *\dot Q_i : i<\kappa\}$ forming a continuous $\cprec$-sequence,
 then for each $i<\kappa$, $P_{i+1} *\dot Q_i$ forces that
  $\mathcal A^{\az}_i$ is thin over the extension by $P_i*\dot Q_i$. 
  This means that it is only the behavior of $\dot Q_{i+1}$ that affects
  if there is $\bz\in \AP$ with $\az\lap 0 \bz$ and $P^{\bz}_i
   = P^{\az}_i * \dot Q_i$ for all $i<\kappa$.

\begin{definition}
 If $\az\in \AP$ (respectively $\az\in \APv$) and $\dot Q\in H(\lambda)$ is
  a  $P^{\az}_\kappa$-name such that\label{cccstep}
  
\begin{enumerate}
 \item $\dot Q$ has cardinality less than $\kappa$,
 \item $P^{\az}_\kappa$ forces that $\dot Q$ is ccc
\end{enumerate}
then $\az * \dot Q$ denotes the $\lap 0$-extension $\bz$ 
as in Lemma \ref{smallccc} where $i<\kappa$ is chosen to be
minimal such
that $\dot Q$ is a $P^{\az}_i$-name. 
\end{definition}

This next lemma illustrates the device we use to ensure that every
 ultrafilter will have pseudo-intersection number at most $\kappa$.

\begin{lemma}
Suppose that $\delta<\lambda$ has cofinality $\kappa$ and that
  $\{ \az_\alpha : \alpha\in \delta\}\subset \AP$ is a $\lap *$-increasing sequence.
  Further suppose that there is a cub $C\subset \delta$
  of order type $\kappa$ such
  that $\{ \az_\alpha : \alpha \in C\}$ is a
    $\lap 0$-increasing
 continuous chain  and  that,  for each\label{APkappa}
 $\alpha\in \acc(C) $,
   $\az_{\alpha+1}$ is a Cohen$^{\omega_1}$-extension of $\az_\alpha$.
   Then, if  $P = \bigcup\{ P^{\az_\alpha}_\kappa : \alpha \in \acc(C) \}$ 
   and $\mathcal E \subset \wp(\mathbb N, P)$ is a maximal
   family that is forced to be a free ultrafilter
   on $\mathbb N$, there is a $\bz\in \AP$ such that
      $\az_\alpha \lap * \bz$ for all $\alpha\in \delta$, $P = P^{\bz}_\kappa$, 
      and, for all $i<\kappa$,
      $ \mathcal E\cap \mathcal A^{\bz}_i $ is not empty. 
\end{lemma}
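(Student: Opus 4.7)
The plan is to apply Lemma \ref{ctschain} to obtain an $\az_\delta \in \AP$ with $\az_\alpha \lap * \az_\delta$ for all $\alpha < \delta$ and $P^{\az_\delta}_\kappa = P$, and then to define $\bz$ by $P^{\bz}_i := P^{\az_\delta}_i$ and $\mathcal A^{\bz}_i := \mathcal A^{\az_\delta}_i \cup \{\dot e_i\}$ for a suitably chosen $\dot e_i \in \mathcal E$ at each level $i$. With this choice, the fact that $\{P^{\bz}_i\}_{i<\kappa}$ is a continuous $\cprec$-chain of ccc posets whose union is $P$, and that $\az_\alpha \lap * \bz$ for every $\alpha < \delta$, is inherited directly from $\az_\delta$. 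The content of the lemma therefore reduces to locating, for each $i<\kappa$, an $\dot e_i \in \mathcal E \cap \wp(\mathbb N, P^{\bz}_{i+1})$ whose addition leaves $\mathcal A^{\bz}_i$ thin over the $P^{\bz}_i$-extension.

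To produce $\dot e_i$ I would enumerate $\acc(C) = \{\gamma_j : j<\kappa\}$ continuously and consider the Cohen$^{\omega_1}$-step $\az_{\gamma_i} \lap 0 \az_{\gamma_i+1}$. The passage from level $i$ to level $i+1$ of $\az_{\gamma_i+1}$ contributes, via the factor $\Fn(\{i+1\}\times\omega_1\times\mathbb N, 2)$, a fresh block $\vec x = \{\dot x_\beta : \beta<\omega_1\}$ of $\omega_1$ Cohen reals at the new coordinate $i+1$. Since coordinate $i+1$ does not appear at level $i$ in any $\az_\alpha$ from the chain, $\vec x$ lies outside $P^{\bz}_i$, and a product argument built on Proposition \ref{cohen} shows that $\vec x$ is in fact Cohen-generic over $V[P^{\bz}_i]$. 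Form the almost-disjoint family $\{\dot c(\vec x, \beta), \dot a(\vec x, \beta) : \beta<\omega_1\}$ of Definition \ref{adfamily}; by Proposition \ref{staythin}, for every $\beta$ the family $\mathcal A^{\az_\delta}_i \cup \{\dot a(\vec x, \beta)\}$ is thin over $V[P^{\bz}_i]$.

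What remains, and what I expect to be the main obstacle, is to choose $\beta_i < \omega_1$ so that $\dot a(\vec x, \beta_i) \in \mathcal E$; granted this, setting $\dot e_i := \dot a(\vec x, \beta_i)$ finishes the step. My plan is to argue by contradiction. Suppose for every $\beta < \omega_1$ that $\dot a(\vec x, \beta) \notin \mathcal E$. The maximality of $\mathcal E$ as a family forced to be a free ultrafilter then yields, for each $\beta$, a condition $p_\beta \in P$ forcing $\dot c(\vec x, \beta) \in \mathcal E$, since otherwise $\dot a(\vec x, \beta)$ would be forced into $\mathcal E$ and admitted into $\mathcal E$ by maximality. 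The $\omega_1$-indexed set $\{p_\beta : \beta<\omega_1\}$ cannot form an antichain in the ccc poset $P$, so fix distinct $\beta, \beta'$ and a common extension $q$ of $p_\beta$ and $p_{\beta'}$. Then $q$ forces $\dot c(\vec x, \beta) \cap \dot c(\vec x, \beta') \in \mathcal E$, but the construction in Definition \ref{adfamily}, via the bijections $e_\beta: \omega \to \beta$, forces $\{\dot c(\vec x, \beta)\}_{\beta<\omega_1}$ to be pairwise almost-disjoint, so this intersection is forced to be finite, contradicting that $\mathcal E$ is a free ultrafilter. The most delicate step of the plan is the Cohen-genericity of $\vec x$ over $V[P^{\bz}_i]$, which requires carefully tracking how the Cohen factors from the various $\acc(C)$-stages assemble within $P^{\az_\delta}_{i+1}$.
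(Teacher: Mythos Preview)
Your plan to set $P^{\bz}_i = P^{\az_\delta}_i$ and then add a single Cohen-type name $\dot e_i$ looks natural, but the step you flag as ``most delicate'' is in fact a genuine obstruction. You want the block $\vec x$ (coordinate $i{+}1$ from the Cohen$^{\omega_1}$-extension at stage $\gamma_i$) to be Cohen-generic over $V[P^{\az_\delta}_i]$. But $P^{\az_\delta}_i = \bigcup_{\alpha\in C} P^{\az_\alpha}_i$ absorbs the level-$i$ posets of \emph{all} stages in $C$, including those $\alpha>\gamma_i$. The hypotheses only give $\az_{\gamma_i+1}\lap{*}\az_\alpha$ for such $\alpha$, so the $\cprec$-relation may hold only above some threshold $j^*>i{+}1$; below that threshold there is nothing preventing $P^{\az_\alpha}_i$ from depending on, or even containing, the very Cohen conditions that define $\vec x$. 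So there is no product decomposition available at level $i$ of $\az_\delta$, Proposition~\ref{cohen} does not apply, and Proposition~\ref{staythin} cannot be invoked.

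The paper circumvents this by defining $\bz$ \emph{diagonally} rather than horizontally: enumerate $\acc(C)=\{\alpha_i:i<\kappa\}$ and set $P^{\bz}_i = P^{\az_{\alpha_i}}_i$ and $\mathcal A^{\bz}_i=\mathcal A^{\az_{\alpha_i}}_i\cup\{a(\vec x_i,\beta_i)\}$, where $\vec x_i$ is the coordinate-$i$ block from the Cohen$^{\omega_1}$-extension $\az_{\alpha_i}\to\az_{\alpha_i+1}$. With this choice the Cohen block is, by definition of the Cohen extension, added freshly over $P^{\az_{\alpha_i}}_i=P^{\bz}_i$, so Proposition~\ref{staythin} applies immediately and the thinness of $\mathcal A^{\bz}_i$ over the $P^{\bz}_i$-extension is clear. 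The price is that one must then verify directly that $\{P^{\bz}_i:i<\kappa\}$ is a continuous $\cprec$-chain with union $P$ and that $\az_\xi\lap{*}\bz$ for every $\xi<\delta$; this is where the $\lap 0$-continuity of the chain over $C$ is used. Your argument for selecting $\beta_i$ via almost-disjointness and the ccc is correct and essentially the same as the paper's.
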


\begin{proof}
 Let $\mathcal E\subset \wp(\mathbb N, P)$ and assume that 
 $1_P$ forces
 that $\mathcal E$ is a free ultrafilter on $\mathbb N$ and
 that $\mathcal E$ is a maximal such family.  This just means that if
  $\dot b\in \wp(\mathbb N, P)$ and $1_p\forces{P}{(\exists \dot e\in {\mathcal E}) 
     \dot b\supset \dot e}$, then $\dot b\in \mathcal E$.  
Let   $\{ \alpha_i : i<\kappa\}$ be the order-preserving enumeration
of $\acc(C)$.  For each $i<\kappa$ we now describe
how to choose a value $\beta_i \in \omega_1$. 
By
 our assumption, $\az_{\alpha_i+1}$ is a Cohen$^{\omega_1}$-extension. 
 That is,  $P^{\az_{\alpha_i+1}}_{i} $ is equal to 
   $P^{\az_{\alpha_i}}_{i+1} * \dot Q^{\az_{\alpha_i}}_{i+1}$
   where $\dot Q^{\az_{\alpha_i}}_{i+1}$ 
   is equal to (the trivial) $P^{\az_{\alpha_i}}_{i+1}$-name
   for 
$   \Fn(i+1\times \omega_1\times \mathbb N,2)$.
 Let $\vec x_{i}$ denote the canonical $\omega_1$-sequence associated
 with $\Fn(\{i\}\times \omega_1\times \mathbb N, 2)$ for this particular copy
 of Cohen forcing. Similarly, let $\{ c(\vec x_i, \beta), 
 a(\vec x_{i},\beta)  : \beta <\omega_1\} \subset 
 \wp(\mathbb N, P^{\az_{\alpha_i+1}}_{i+1})$ 
 be the family of names as constructed as in Definition \ref{adfamily}.
 Since the family $\{ c(\vec x_i , \beta) : \beta<\omega_1\}$ is forced to
 be pairwise almost disjoint, there is a maximal antichain $A_i\subset P$
 such that for each $p\in A_i$, there is a $\beta_p$ such that $p$
 forces that $a(\vec x_i, \xi)$ is in $\mathcal E$ for all $\beta_p<\xi \in \omega_1$.
 Since $P$ is ccc,  $A_i$ is countable, and so we may choose 
  any value $\beta_i \in \omega_1$ that is larger
 than $\beta_p$ for each $p\in A_i$. It follows
 that  $1\forces{P}{(\exists \dot e\in \mathcal E)
  a(\vec x_i,\beta_i)\supset \dot e}$. 
 By the maximality assumption on $\mathcal E$,  $a(\vec x_i,\beta_i)\in {\mathcal E}$.
 
 Now we define $\bz$. For each $i<\kappa$, $P^{\bz}_i = P^{\az_{\alpha_i}}_i$ and
  $\mathcal A^{\bz}_i = \mathcal A^{\az_{\alpha_i}}_i\cup \{ a(\vec x_i,\beta_i)\}$.
  Evidently we have that $\mathcal E\cap \mathcal A^{\bz}_i$ is not empty 
  for all $i<\kappa$.
  Since $P^{\az_{\alpha_i+1}}_{i+1} \cprec P^{\az_{\alpha_{i+1}}}_{i+1}$,
   we have that $\mathcal A^{\bz}_i$ is a subset of $\wp(\mathbb N, P^{\bz}_{i+1})$. 
   It follows from Proposition \ref{staythin} that $\mathcal A^{\bz}_i$ is forced
   to be thin over the forcing extension by $P^{\bz}_i = P^{\az_{\alpha_i}}_i$.
   Now for $i<j<\kappa$, $P^{\bz}_i = P^{\az_{\alpha_i}}_i \cprec
    P^{\az_{\alpha_i}}_j \cprec P^{\alpha_j}_j = P^{\bz}_j$.
Now suppose that   $j<\kappa$ is a limit of uncountable cofinality,
 we have to check that $P^{\bz}_j = \bigcup\{ P^{\bz}_i : i<j\}$. Let $p\in P^{\bz}_j$.
 Since $P^{\bz}_j = P^{\az_{\alpha_j}}_j = \bigcup \{ P^{\az_{\alpha_j}}_i : i<j\}$,
  we may choose $i_1<j$ such that $p\in P^{\az_{\alpha_j}}_{i_1}$. 
  By
  the assumption that $\{ \az_\alpha : \alpha \in C \}$ is a $\lap 0$-increasing
  continuous chain, there is an $i<j$ with $i_1 \leq i$ and $p\in 
    P^{\az_{\alpha_i}}_{i_1}$. Finally,
   $p\in P^{\az_{\alpha_i}}_{i_1} \subset P^{\az_{\alpha_i}}_i = P^{\bz}_i$ which
   completes this step.  It also shows that $P^{\bz}_\kappa =  P$.
     This completes
    the verification that $\bz\in \AP$. Fix any   $\xi <\delta$
    we verify that $\az_\xi\lap * b$.  Choose $i<\kappa$ so that $\xi<
    \alpha_i $ and choose   $i^*<\kappa$ so that 
     $\az_\xi \lap {i^* } \az_{\alpha_i}$.  We show that $\az_\xi \lap {i^*} \bz$.
    Let  $i^*\leq j<\kappa$. 
    First we have that   $\mathcal A^{\az_\xi}_j 
    \subset \mathcal A^{\alpha_i}_j\subset A^{\alpha_j}_j \subset A^{\bz}_j$. Secondly,
       $P^{\az_\xi}_j \cprec P^{\az_{\alpha_i}}_j \cprec P^{\az_{\alpha_j}}_j = P^{\bz}_j$. 
 \end{proof}

The proof of this next lemma is the same so the proof is omitted.

\begin{lemma}
Suppose that
  $\{ \az_\alpha : \alpha\in \kappa \}\subset \APv$ is a  $\lap 0$-increasing
 continuous chain for some $i<\kappa$ and  that,  for each limit\label{APvkappa}
 $\alpha\in \kappa$,
   $\az_{\alpha+1}$ is a Cohen$^{\omega_1}$-extension of $\az_\alpha$.
   Then, if  $P = \bigcup\{ P^{\az_\alpha}_\kappa : \alpha < \kappa\}$ 
   and $\mathcal E \subset \wp(\mathbb N, P)$ is a maximal
   family that is forced to be a free ultrafilter
   on $\mathbb N$, there is a $\bz\in \APv$ such that
      $\az_\alpha \lap * \bz$ for all $\alpha\in \kappa$, $P = P^{\bz}_\kappa$, 
      and 
      $\{ i < \kappa : \mathcal E\cap \mathcal A^{\bz}_i \neq \emptyset\}$
      has cardinality $\kappa$.    
\end{lemma}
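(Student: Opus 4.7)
The strategy is to follow the construction of Lemma \ref{APkappa} in a simpler setting: the chain is now indexed by $\kappa$ itself and is already $\lap 0$-increasing continuous, so there is no need to pass to a cofinal subset of a larger $\delta$. Since every limit step of the chain is followed by a Cohen$^{\omega_1}$-extension, fresh Cohen generics are available at every $i<\kappa$ from which to extract an almost disjoint family of candidate sets for $\mathcal{A}^{\bz}_i$.

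First I would enumerate the limit ordinals of $\kappa$ as $\{\alpha_i:i<\kappa\}$ in increasing continuous order. At each $\alpha_i$, the passage from $\az_{\alpha_i}$ to $\az_{\alpha_i+1}$ adjoins the copy of $\Fn(i{+}1\times\omega_1\times\mathbb{N},2)$, and I extract the canonical $\omega_1$-sequence $\vec x_i=\{\dot x_{i,\beta}:\beta<\omega_1\}$ together with the $P^{\az_{\alpha_i+1}}_{i+1}$-names $\{a(\vec x_i,\beta),c(\vec x_i,\beta):\beta<\omega_1\}$ of Definition \ref{adfamily}. Because the $c(\vec x_i,\beta)$'s are forced to be pairwise almost disjoint and $\mathcal{E}$ is forced to be an ultrafilter, at most one of them can belong to $\mathcal{E}$; hence for each $p\in P$ there is $\beta_p<\omega_1$ such that $p\Vdash a(\vec x_i,\xi)\supset\dot e$ for some $\dot e\in\mathcal{E}$ whenever $\xi>\beta_p$. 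Taking a maximal antichain $A_i\subset P$ of such conditions and using ccc, I choose $\beta_i<\omega_1$ dominating all the $\beta_p$ for $p\in A_i$. Maximality of $\mathcal{E}$ then forces $a(\vec x_i,\beta_i)\in\mathcal{E}$.

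Next I set $P^{\bz}_i = P^{\az_{\alpha_i}}_i$ and $\mathcal{A}^{\bz}_i = \mathcal{A}^{\az_{\alpha_i}}_i \cup \{a(\vec x_i,\beta_i)\}$ for each $i<\kappa$. Verifying $\bz\in\APv$ requires two items. Continuity of $\{P^{\bz}_i:i<\kappa\}$ at limits of uncountable cofinality, and the identification $P^{\bz}_\kappa = P$, follow exactly as in the corresponding step of Lemma \ref{APkappa} from the $\lap 0$-continuity of $\{\az_\alpha : \alpha<\kappa\}$. The substantive item is checking that $\mathcal{A}^{\bz}_i$ is forced by $P^{\bz}_{i+1}$ to be very thin over the extension by $P^{\bz}_i$. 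This is exactly where Proposition \ref{staythin} is used: $\mathcal{A}^{\az_{\alpha_i}}_i$ is already very thin over $P^{\bz}_i = P^{\az_{\alpha_i}}_i$ because $\az_{\alpha_i}\in\APv$, and $a(\vec x_i,\beta_i)$ is built from the Cohen generic for $\Fn(\omega_1\times\mathbb{N},2)$ over this ground model, so the proposition delivers very-thinness of the enlarged family.

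Finally, to see $\az_\alpha\lap * \bz$ for each $\alpha<\kappa$, I choose $i<\kappa$ with $\alpha<\alpha_i$ and $i^*<\kappa$ with $\az_\alpha\lap{i^*}\az_{\alpha_i}$, and for every $j\ge\max(i,i^*)$ verify the inclusions $\mathcal{A}^{\az_\alpha}_j\subset\mathcal{A}^{\az_{\alpha_i}}_j\subset\mathcal{A}^{\az_{\alpha_j}}_j\subset\mathcal{A}^{\bz}_j$ together with $P^{\az_\alpha}_j\cprec P^{\az_{\alpha_i}}_j\cprec P^{\az_{\alpha_j}}_j = P^{\bz}_j$. By construction $a(\vec x_i,\beta_i)\in\mathcal{E}\cap\mathcal{A}^{\bz}_i$ for every $i<\kappa$, giving the required $\kappa$-sized index set. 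The only place that needs any care is the very-thinness verification, but since Proposition \ref{staythin} is phrased uniformly for thin and very thin, the argument of Lemma \ref{APkappa} transfers verbatim.
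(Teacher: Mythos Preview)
Your proposal is correct and matches the paper's approach exactly: the paper omits the proof, stating only that it is the same as that of Lemma \ref{APkappa}, and you have faithfully carried out that adaptation, correctly noting that Proposition \ref{staythin} is stated uniformly for both the thin and very thin cases. One harmless redundancy: since the chain here is already $\lap 0$-increasing, the auxiliary index $i^*$ in your final verification is unnecessary (you may take $i^*=0$), but this does not affect correctness.
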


\section{the Laver style posets}

In this section we develop the tools to allow us incorporate posets
into $\lap *$-chains that will increase the splitting number.
An ultrafilter $\mathcal D$ on $\mathbb N$ is Ramsey if for each
function $f$ with domain $\mathbb N$ and range an ordinal,
 there is a $D\in \mathcal D$
such that $f\restriction D$ is either constant or is strictly increasing.
For any family $\mathcal D$ of subsets of $ \mathbb N $
 that has the finite intersection property, we let $\langle \mathcal D\rangle$
 denote the filter generated by $\mathcal D$. We use the standard
 notation, ${\mathcal D}^+$, to denote the set of subsets of $\mathbb N$
 that meet every member of $\mathcal D$.

\begin{proposition}
If $\mathcal D_0$ is a free filter on $\mathbb N$ and $\theta$ 
is   an ordinal with $\theta\geq \mathfrak c$, then $\mathcal D_0$
can be extended to a   Ramsey ultrafilter
 in the forcing extension   by $\Fn(\theta,2)$. 
\end{proposition}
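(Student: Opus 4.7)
My plan is to construct the Ramsey ultrafilter extending $\mathcal{D}_0$ by a transfinite recursion of length $\mathfrak{c}^{V[G]}$ inside the extension $V[G]$, where $G$ is $\Fn(\theta,2)$-generic. The key fact I would use is that $\Fn(\theta,2)$ with $\theta \geq \mathfrak{c}$ forces $\mathrm{cov}(\mathcal{M})^{V[G]} \geq \theta$, and in fact $\mathrm{cov}(\mathcal{M}) = \mathfrak{c}$ in the Cohen extension. Since $\mathcal{D}_0 \in V$, it has a generating family of size at most $\mathfrak{c}^V \leq \theta \leq \mathrm{cov}(\mathcal{M})^{V[G]}$, so the filter character never exceeds what the diagonalization tolerates.

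In $V[G]$, I would enumerate the functions $\{f_\xi : \xi < \mathfrak{c}^{V[G]}\}$ from $\mathbb{N}$ to $\omega$, the subsets $\{Y_\xi : \xi < \mathfrak{c}^{V[G]}\}$ of $\mathbb{N}$, and a generating family $\{d_\eta : \eta < \mathfrak{c}^V\}$ for $\mathcal{D}_0$. Then I would build a $\subseteq^*$-descending chain $\langle X_\xi : \xi < \mathfrak{c}^{V[G]}\rangle$ of infinite subsets of $\mathbb{N}$ such that every $X_\xi$ meets every $d_\eta$ in an infinite set. At a successor stage $\xi+1$, first refine $X_\xi$ so that $f_\xi \restriction X_{\xi+1}$ is either constant or strictly increasing (by pigeonhole: either some preimage $f_\xi^{-1}(n)\cap X_\xi$ still meets every $d_\eta$ positively, giving a constant refinement, or else $f_\xi$ attains infinitely many values on $X_\xi$ and one picks a one-to-one selector); then shrink further so that $X_{\xi+1} \subseteq^* Y_\xi$ or $X_{\xi+1} \subseteq^* \mathbb{N}\setminus Y_\xi$ to force ultrafilter maximality.

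The delicate limit step: at a limit $\xi$ of uncountable cofinality, produce a pseudo-intersection $X_\xi$ of $\{X_\eta : \eta < \xi\}$ that still meets every $d_\eta$ in an infinite set. In the natural Polish topology on $[\mathbb{N}]^\omega$, the pseudo-intersection conditions amount to countably many dense-open requirements per $X_\eta$, and the positivity constraints against each $d_\eta$ are open; so finding $X_\xi$ reduces to meeting fewer than $\mathfrak{c}^{V[G]}$ dense-open sets, which is possible precisely because $\mathrm{cov}(\mathcal{M})^{V[G]} = \mathfrak{c}^{V[G]}$. The resulting family $\mathcal{U} = \{Y \subseteq \mathbb{N} : \exists\,\xi\ X_\xi \subseteq^* Y\}$ is then a Ramsey ultrafilter extending $\mathcal{D}_0$. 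The main obstacle is exactly this limit step, and it is where the hypothesis $\theta \geq \mathfrak{c}$ is essential: without it there would be too few Cohen reals to push $\mathrm{cov}(\mathcal{M})$ up to $\mathfrak{c}^{V[G]}$, and the diagonalization would fail at some limit.
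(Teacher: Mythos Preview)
The paper states this proposition without proof, so there is no argument to compare against; I can only assess correctness.

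Your limit step contains a genuine error. You claim that at a limit $\xi$ of uncountable cofinality the requirement ``$X_\xi \subseteq^* X_\eta$'' is a countable conjunction of dense-open conditions in $[\mathbb N]^\omega$. It is not: for any coinfinite $X_\eta$ the set $\{X : X \subseteq^* X_\eta\}$ is an $F_\sigma$ \emph{meager} subset of $2^{\mathbb N}$, so Baire category gives you nothing here. Worse, in the Cohen extension one has $\mathfrak t=\aleph_1$, so already at stage $\omega_1$ your $\subseteq^*$-decreasing tower may simply have no pseudo-intersection at all, let alone one that remains $\mathcal D_0$-positive. The tower construction therefore cannot be carried to length $\mathfrak c^{V[G]}$.

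The standard repair is to abandon the tower and build an increasing chain of \emph{filters} $\mathcal F_\alpha$ instead: at limits take unions, and at successors add a single set $A_\alpha\in\mathcal F_\alpha^{+}$ on which $f_\alpha$ is constant or one-to-one. The successor step is exactly where $\mathrm{cov}(\mathcal M)$ is used: if every fibre of $f_\alpha$ is $\mathcal F_\alpha$-null, then in the compact space $\prod_n f_\alpha^{-1}\{n\}$ of selectors the requirement ``the selector meets $F$ infinitely often'' is dense $G_\delta$ for each $F\in\mathcal F_\alpha$, and one needs only that $\chi(\mathcal F_\alpha)<\mathrm{cov}(\mathcal M)^{V[G]}$ to find a selector meeting all of them. (A minor additional point: your assertion that $\mathrm{cov}(\mathcal M)=\mathfrak c$ in $V[G]$ requires $\theta^{\aleph_0}=\theta$; for instance if $\theta=\aleph_\omega$ one has $\mathrm{cov}(\mathcal M)^{V[G]}\leq\aleph_\omega<\mathfrak c^{V[G]}$. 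In the paper's applications $\theta=\theta^{\aleph_0}$ is always assumed, so this does not affect the intended use, but it does mean one must either assume it or organise the recursion through the intermediate models $V[G\restriction\mu]$ with $\mu^{\aleph_0}=\mu$.)
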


\begin{definition}
 For a  filter $\mathcal D$ on $\omega$, we define\label{deflaver}
 the Laver style poset
  $\mathbb L(\mathcal D)$ to be the set of trees $T\subset \mathbb N^{<\omega}$
  with the property that $T$ has a minimal branching node   $\sakne
(T)$
  and for all $\sakne(T)\subseteq t\in T$, the branching set
   $\Succ_T(t) = \{ k : t^\frown k \in T\}$ is an element of $\mathcal D$.
   For any tree $T\subset \mathbb N^{<\omega}$ and $t\in T$, we let
    $T_t = \{ s\in T : s\cup t \in T\}$.
 
The name $\dot L_{\mathcal D} = \{ (k, T) : (\exists t) 
 T =\left(\mathbb N^{<\omega}\right)_{t^\frown k}\}$ 
 will be referred to as the canonical name for the real
(pseudo-intersection)
added by $\mathbb L(\mathcal D)$.
\end{definition}

\begin{proposition}
 If $\mathcal D$ is any free filter on $\mathbb N$, then\label{sandb}
  $\dot L_{\mathcal D}$ is forced to be a pseudo-intersection for
   $\mathcal D$ and for every function $f\in \mathbb N^{\mathbb N}$,
   the enumeration function of $\dot L_{\mathcal D}$
    is  forced to be mod finite greater than $f$.
\end{proposition}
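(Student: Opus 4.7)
The plan is to reduce both claims to standard Laver-style thinnings in $\mathbb{L}(\mathcal D)$, exploiting the fact that any free filter on $\mathbb N$ contains the Fr\'echet filter. Indeed, if $\bigcap \mathcal D = \emptyset$ then for each $n$ some element of $\mathcal D$ omits $n$, and upward closure puts $\mathbb N \setminus \{n\}$ in $\mathcal D$; hence every cofinite set belongs to $\mathcal D$ and may be intersected with any member of $\mathcal D$ without leaving the filter.

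For the pseudo-intersection claim, I would fix an arbitrary $T \in \mathbb{L}(\mathcal D)$ and $D \in \mathcal D$, and construct a strengthening $T' \subseteq T$ by retaining the stem of $T$ and setting $\Succ_{T'}(t) = \Succ_T(t) \cap D$ for every $t \in T'$ extending $\sakne(T)$. Filter closure gives $T' \in \mathbb{L}(\mathcal D)$, and $T'$ forces every value of the generic branch past $\sakne(T)$ to lie in $D$, so $T' \Vdash \dot L_{\mathcal D} \setminus D \subseteq \ran(\sakne(T))$, which is finite. By density, $\dot L_{\mathcal D}$ is forced to be almost contained in every element of $\mathcal D$; its infiniteness will follow from the strictly-increasing branches produced in the next paragraph.

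For the domination claim, given $T$ and $f \in \mathbb N^{\mathbb N}$, I would construct $T' \subseteq T$ with the same stem by recursively setting
\[
\Succ_{T'}(t) = \Succ_T(t) \cap \bigl\{ k : k > f(|t|) \text{ and } k > \max(\ran(t)) \bigr\}
\]
for $t \supseteq \sakne(T)$. The thinning set is cofinite and therefore in $\mathcal D$, so $T' \in \mathbb{L}(\mathcal D)$. Every branch $b$ through $T'$ is strictly increasing past the stem and satisfies $b(n) > f(n)$ for $n \geq |\sakne(T)|$; strict monotonicity makes the increasing enumeration of $\ran(b) = \dot L_{\mathcal D}$ agree with $b$ above the stem, so the enumeration function of $\dot L_{\mathcal D}$ exceeds $f$ past a fixed finite index. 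Density yields the claim.

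The only mild subtlety is keeping the enumeration function of the \emph{set} $\dot L_{\mathcal D}$ in step with the generic branch itself; the auxiliary clause $k > \max(\ran(t))$ in the thinning forces the branch to be strictly increasing, which aligns the two and simultaneously ensures $\dot L_{\mathcal D}$ is infinite.
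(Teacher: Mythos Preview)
The paper states this proposition without proof (it is standard Laver-forcing folklore), so there is nothing to compare against; your argument is the expected one and is correct in outline. One small point to tighten in the domination half: when you say the enumeration function of $\dot L_{\mathcal D}$ ``agrees with $b$ above the stem,'' this is only literally true if the stem is injective. In general $\sakne(T)$ has length $m$ but may have only $s \le m$ distinct values, so the increasing enumeration $e$ of $\ran(b)$ satisfies $e(s+i) = b(m+i)$, an index shift of $m-s \ge 0$. From $b(j) > f(j)$ for $j \ge m$ you then get $e(n) > f(n+m-s)$ for $n \ge s$, which does not immediately yield $e(n) > f(n)$ when $f$ is not monotone. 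The fix is trivial: either assume without loss of generality that $f$ is nondecreasing (replace $f$ by $n \mapsto \max_{i \le n} f(i)$), or thin with $k > \max_{i \le |t|} f(i)$ rather than $k > f(|t|)$.
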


\begin{definition}
If $E$ is a dense subset of $\mathbb L(\mathcal D)$, then 
there is a (rank)
  function $\rho_E $ from $\mathbb N^{<\omega}$ into $\omega_1$ 
where 
  $\rho_E(t) = 0$ 
if and only if $t = \sakne
(T)$ for some $T\in E$,
and
 for all $t\in \mathbb N^{<\omega}$  and $0<\alpha\in \omega_1$, 
 $\rho_E(t) =\alpha$ if  $\alpha$ is  minimal such that the set\label{rank} 
 $\{ k \in \omega : \rho_E(t^\frown k)<\alpha\}$ is in 
 $\mathcal D^+$.  
\end{definition}

\begin{proposition}
 If $\mathcal D$ is a Ramsey ultrafilter and $E\subset \mathbb L(\mathcal D)$
 is a dense\label{rankR} set, then for each $t\in \mathbb N^{<\omega}$ with 
 $\rho_E(t) > 0$,
there is a    $D_t\in \mathcal D$ such that $\{ \rho_E(t^\frown k) : k\in D_t\}$
 is increasing and cofinal in $\rho_E(t)$. 
\end{proposition}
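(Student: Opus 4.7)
The plan is to apply the Ramsey partition property of $\mathcal D$ directly to the ordinal-valued function $f: \mathbb N \to \omega_1$ given by $f(k) = \rho_E(t^\frown k)$, and then use the minimality built into Definition \ref{rank} to pin the supremum of $f$ on the resulting homogeneous set to exactly $\alpha := \rho_E(t)$.

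First I would fix $\alpha > 0$ and apply the Ramsey property to $f$, obtaining $D \in \mathcal D$ on which $f \restriction D$ is constant or strictly increasing. Because $\mathcal D$ is an ultrafilter, $\mathcal D^+$ coincides with $\mathcal D$, which makes the comparison against the definition of $\rho_E(t)$ clean. The rest is a brief case analysis.

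If $f \equiv \beta$ on $D$, then $\{k : f(k) < \beta + 1\} \in \mathcal D$ while $\{k : f(k) < \beta\} \notin \mathcal D$; this sandwiches $\alpha$ strictly between $\beta$ and $\beta + 1$, so $\alpha = \beta + 1$ is a successor and $D_t := D$ gives the singleton witness $\{\beta\} = \{\alpha - 1\}$. If $f \restriction D$ is strictly increasing, enumerating $D$ as $k_0 < k_1 < \cdots$ makes $\{f(k_i) : i \in \omega\}$ a strictly increasing $\omega$-sequence of ordinals whose supremum $\beta$ is a limit ordinal. A value $\beta < \alpha$ would contradict the minimality of $\alpha$ (since $\{k : f(k) < \beta\} \supseteq D \in \mathcal D$), and a value $\beta > \alpha$ would force the free ultrafilter $\mathcal D$ to contain the \emph{finite} set $D \cap \{k : f(k) < \alpha\}$, since only finitely many $f(k_i)$ lie below $\alpha$ in that case. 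Hence $\beta = \alpha$ and $D_t := D$ witnesses the claim, with the image strictly increasing and cofinal in $\alpha$.

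The only subtlety is the bookkeeping of successor versus limit $\alpha$: the Ramsey dichotomy splits along exactly that line, with the constant case producing the singleton witness for a successor $\alpha$ and the strictly increasing case producing a genuine cofinal sequence for a limit $\alpha$. There is no serious obstacle — the minimality clause of Definition \ref{rank} is designed precisely so that, once Ramseyness reduces $f$ to a monotone form, the supremum is forced to land on $\alpha$.
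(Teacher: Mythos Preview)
Your proof is correct, and the paper states this proposition without proof (it is treated as a standard fact about Ramsey ultrafilters and rank functions), so there is no argument to compare against; your approach is the expected one. One small wording quibble: saying $\alpha$ lies ``strictly between $\beta$ and $\beta+1$'' is awkward for ordinals --- what you mean and what your argument shows is $\beta < \alpha \leq \beta+1$, hence $\alpha = \beta+1$ --- but the logic is sound throughout.
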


\begin{lemma}[\cite{charspectrum}*{1.9}]
 Suppose that
 $\Poset ,\Qposet $ are posets
with   $\Poset\cprec  \Qposet $.
Suppose also that  $\dot{\mathcal D}_0$ is 
 a $\mathbb P$-name of a  filter on $\mathbb N$ and
 $\dot{\mathcal D}_1$ is a\label{laver} 
 $\mathbb Q$-name of a  filter on $\mathbb N$.
  If $\Vdash_{\Qposet } \dot{\mathcal D}_0
\subseteq \dot{\mathcal D}_1 $
then $\Poset*\mathbb L(\dot{\mathcal D}_0)$  is a complete
subposet of $    \Qposet*\mathbb L(\dot{\mathcal D}_1)$
if either of the two equivalent conditions hold:
\begin{enumerate}
\item $\Vdash_{\Qposet} (\wp(\mathbb N, \Poset)\cap \dot{\mathcal
    D}_0^+)  \subseteq 
  \dot{\mathcal D}_1^+$, 
\item $\Vdash_{\Qposet}
\dot {\mathcal D}_1\cap \wp(\mathbb N, \Poset)
\subseteq \langle\dot{\mathcal D}_0\rangle$.
\end{enumerate}
\end{lemma}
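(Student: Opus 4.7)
The plan has three parts: (i) equivalence of the two conditions, (ii) the set-theoretic inclusion and a reduction via Lemma \ref{successorMatrix}, and (iii) the preservation of maximal antichains, which is the heart of the matter.

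For (i), for any filter $\mathcal D$ and any $A \subseteq \mathbb N$, $A \in \mathcal D^+$ is equivalent to $\mathbb N \setminus A \notin \langle \mathcal D \rangle$; since $\wp(\mathbb N, \Poset)$ is closed under complements, statement (1) is precisely the contrapositive of statement (2). For (ii), the inclusion $\Poset * \mathbb L(\dot{\mathcal D}_0) \subseteq \Qposet * \mathbb L(\dot{\mathcal D}_1)$ is immediate from $\dot{\mathcal D}_0 \subseteq \dot{\mathcal D}_1$, since any tree branching in $\dot{\mathcal D}_0$ a fortiori branches in $\dot{\mathcal D}_1$. By Lemma \ref{successorMatrix}, it therefore suffices to show that in $V^{\Qposet}$ (with $G_P := G_Q \cap \Poset$) every maximal antichain of $\mathbb L(\mathcal D_0)$ from $V[G_P]$ remains a maximal antichain of $\mathbb L(\mathcal D_1)$.

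Fix such an antichain $A$ and any $T \in \mathbb L(\mathcal D_1)$; I will produce a $T' \in A$ compatible with $T$. Let $D$ denote the dense set of conditions in $\mathbb L(\mathcal D_0)$ that extend some element of $A$, and let $\rho := \rho_D$ be the rank function of Definition \ref{rank}, computed in $V[G_P]$. Beginning with $t_0 := \sakne(T)$, I recursively construct a strictly $\rho$-decreasing chain $t_0 \subsetneq t_1 \subsetneq \cdots \subsetneq t_n$ inside $T$ that terminates at some $t_n$ with $\rho(t_n) = 0$. At each stage $i$ with $\rho(t_i) > 0$, the set $S_i := \{ k : \rho(t_i{}^\frown k) < \rho(t_i) \}$ is $\mathcal D_0$-positive by the definition of $\rho$ and lies in $\wp(\mathbb N, \Poset)$ (being definable in $V[G_P]$), so by hypothesis (1) it is $\mathcal D_1$-positive in $V[G_Q]$; since $\Succ_T(t_i) \in \mathcal D_1$, the intersection $S_i \cap \Succ_T(t_i)$ is non-empty, and any $k_i$ chosen from it yields $t_{i+1} := t_i{}^\frown k_i \in T$. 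Well-foundedness of $\omega_1$ terminates the descent in finitely many steps; at $t_n$ we have $t_n = \sakne(S)$ for some $S \in D$, and $S$ extends some $T' \in A$. The common refinement $T'' := T_{t_n} \cap S$ has stem $t_n$ and, at each $s \supseteq t_n$ in $T''$, branching $\Succ_T(s) \cap \Succ_S(s) \in \mathcal D_1$ (intersection of a $\mathcal D_1$-set with a $\mathcal D_0 \subseteq \mathcal D_1$-set), so $T'' \in \mathbb L(\mathcal D_1)$ extends both $T$ and $T'$.

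The main obstacle is verifying that $\rho$ is defined throughout $\mathbb N^{<\omega}$ so the descent can begin at any $t_0$; this is where the density of $D$ (coming from the maximality of $A$) enters. If $\rho(t)$ were undefined, then a countable-supremum argument — any countable family of ordinals below $\omega_1$ has supremum below $\omega_1$ — shows that the set of successors with defined rank cannot be in $\mathcal D_0^+$, so $\{ k : \rho(t{}^\frown k) \text{ undefined}\} \in \mathcal D_0$. Iterating this at each such node, one builds a condition $\hat T \in \mathbb L(\mathcal D_0)$ with $\sakne(\hat T) = t$ whose branching stays entirely inside the set $U$ of undefined nodes; density of $D$ then yields some $S \in D$ extending $\hat T$, forcing $\sakne(S) \in U$ and contradicting $\rho(\sakne(S)) = 0$.
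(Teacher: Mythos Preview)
Your proof is correct and follows essentially the same approach as the paper: reduce via Lemma~\ref{successorMatrix} to showing that maximal antichains of $\mathbb L(\mathcal D_0)$ in $V[G_P]$ remain predense in $\mathbb L(\mathcal D_1)$, and then use the rank function $\rho_E$ of Definition~\ref{rank} to descend from $\sakne(T)$ to a rank-zero node. The paper's proof is a terse sketch (it states the result as an induction on $\alpha$ and leaves the use of hypothesis~(1) implicit), whereas you have unpacked the descent step, made the appeal to condition~(1) explicit at each stage, supplied the argument for totality of $\rho$, and verified the equivalence of~(1) and~(2) --- all of which the paper omits.
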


\begin{proof}
Let $\dot E$ be any $\mathbb P$-name of a maximal antichain
of $\mathbb L(\dot{\mathcal D}_0)$. By Lemma \ref{successorMatrix},
it suffices to show that $\mathbb Q$ forces that every member
of $\mathbb L(\dot{\mathcal D}_1)$ is compatible with some member
of  $\dot E$. Let $G$ be any $\mathbb Q$-generic filter
and let $E$ denote the valuation of $\dot E$ by $G\cap \mathbb P$.
Working in the model $V[G\cap \mathbb P]$, we
  have the function $\rho_E $ as in Lemma \ref{rank}.
  Choose   $\delta\in \omega_1$ satisfying
 that $\rho_E(t) < \delta$ for all $t\in \omega^{<\omega}$.
 Now,
 working in $V[G]$, 
 we consider any $T\in \mathbb L(\dot{\mathcal D}_1)$ and we find
 an element of $E$ that is compatible with $T$.  In fact,  
 by induction on $\alpha <\delta$, one easily proves
 that for each $T\in \mathbb L(\dot{\mathcal D}_1)$ with
 $\rho_E(\sakne(T)) 
 \leq \alpha$, $T$ is compatible with some member of $E$.
\end{proof}

 If $\dot{\mathcal D_0}$ is the $\Poset$-name of a
maximal filter (ultrafilter), then the conditions in Lemma \ref{laver} hold.

\begin{lemma}
  If $V\subset V'$ are models and $\mathcal A\in V'$ is\label{Laverthin}
 thin over $V$, then
  for every Ramsey ultrafilter $\mathcal D\in V$, there is an
  ultrafilter $\mathcal D' \supset \mathcal D$ in $V'$ such that, for
  each $V'$-generic filter $G'$ for $\mathbb L(\mathcal D')$,
  $\mathcal A$ is thin over $V[G'\cap \mathbb L(\mathcal D)]$.  In
  other words, in the forcing extension of $V'$ by
  $\mathbb L(\mathcal D')$, $\mathcal A$ is thin over the forcing
  extension of $V$ by $\mathbb L(\mathcal D)$.
\end{lemma}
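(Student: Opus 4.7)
The plan is to build $\mathcal{D}'$ in $V'$ by a transfinite recursion over a well-ordering of $\wp(\mathbb{N})^{V'}$ together with the ``threats'' $(\dot H, a)$ to thinness, where $\dot H = \{\dot H_n\}_{n<\omega}$ is an $\mathbb{L}(\mathcal{D})$-name in $V$ of a pairwise disjoint sequence of $\leq\ell$-sized sets and $a$ lies in the ideal generated by $\mathcal{A}$. At a stage coded by a subset of $\mathbb{N}$ one does the usual ultrafilter step; at a stage coded by a threat $(\dot H, a)$ one adjoins a set which will force $\mathbb{L}(\mathcal{D}')\Vdash (\exists n)\, \dot H_n \cap a = \emptyset$. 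Combined with $V$-genericity of the restriction $G' \cap \mathbb{L}(\mathcal{D})$, this yields thinness of $\mathcal{A}$ over $V[G' \cap \mathbb{L}(\mathcal{D})]$.

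The $V$-genericity of $G' \cap \mathbb{L}(\mathcal{D})$ is secured separately: for any filter $\mathcal{D}' \supseteq \mathcal{D}$ in $V'$ one has $\mathbb{L}(\mathcal{D}) <_V \mathbb{L}(\mathcal{D}')$, i.e., each $V$-maximal antichain of $\mathbb{L}(\mathcal{D})$ remains maximal in $\mathbb{L}(\mathcal{D}')$ as computed in $V'$. This is the rank argument from the proof of Lemma~\ref{laver}: given $V$-dense $E \subseteq \mathbb{L}(\mathcal{D})$ and $T' \in \mathbb{L}(\mathcal{D}')$, I would induct on $\rho_E(\sakne(T'))$, using Proposition~\ref{rankR} to find $D_t \in \mathcal{D} \subseteq \mathcal{D}'$ and then selecting a successor $k \in D_t \cap \Succ_{T'}(t) \in \mathcal{D}'$ of strictly lower $\rho_E$-rank, arriving at rank $0$ in finitely many steps.

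For the threat-handling step at $(\dot H, a)$, I would use Ramseyness of $\mathcal{D}$ and iterated application of Proposition~\ref{rankR} to build, in $V$, a fusion tree $S \in \mathbb{L}(\mathcal{D})$ such that for every node $s \in S$ strictly above the stem, the condition $S_s$ decides $\dot H_{n(s)}$ to a specific value $g(s) \in V$, with $n(s)$ depending on the height of $s$ so that distinct heights decide distinct coordinates. Along any $V$-branch $b$ of $S$, the sequence $\{g(b\restriction k)\}_k$ is a pairwise disjoint $V$-sequence of $\leq\ell$-sized sets (consistency of decisions along $b$ inheriting pairwise disjointness from the forced disjointness of the $\dot H_n$'s), so thinness of $\mathcal{A}$ over $V$, evaluated in $V'$, produces a safe level $k_b < \omega$ with $g(b\restriction k_b) \cap a = \emptyset$. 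The $V'$-set $\mathrm{Safe}(a) := \{s \in S : g(s) \cap a = \emptyset\}$ therefore meets every $V$-branch of $S$.

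The main obstacle is to convert this branch-wise safety into a single $V'$-set to adjoin to the filter at this stage. My plan is to apply a further Ramsey-type thinning to the ``first-safe-level'' function on the $V$-branches of $S$ in order to extract a $\mathcal{D}$-positive set $D_{(\dot H, a)} \in V'$ of safe witnesses; adjoining $D_{(\dot H, a)}$ to $\mathcal{D}'$ at this stage, together with the preservation property from the rank argument of the previous paragraph, should force every $T' \in \mathbb{L}(\mathcal{D}')$ to admit an extension landing in $\mathrm{Safe}(a) \cap S$, and any such extension forces $\dot H_{n(s)} \cap a = \emptyset$ at the corresponding safe node. Verifying that $D_{(\dot H,a)}$ is FIP-compatible with all previously adjoined sets, and that the extension procedure actually produces a condition in $\mathbb{L}(\mathcal{D}')$ forcing the desired statement, is where the bookkeeping has to be performed most carefully; I expect the hardest step to be the simultaneous coordination of the ``safe witness'' adjunctions across the $|V'|$-many threats handled during the recursion.
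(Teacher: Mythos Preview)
Your proposal has the right instincts—fusion, rank descent, and using thinness along branches—but the step you yourself flag as the hardest is in fact a genuine gap: you never say what the set $D_{(\dot H,a)}\subset\mathbb N$ actually is. The ``first-safe-level'' function lives on the space of $V$-branches of $S$, not on $\mathbb N$, and there is no evident way to Ramsey-thin it into a single $\mathcal D$-positive subset of $\mathbb N$ whose presence in $\mathcal D'$ forces the generic branch through a safe node. Safe nodes are scattered across varying levels of $S$, and at any fixed node $t$ the set $\{k\in\Succ_S(t):g(t^\frown k)\cap a=\emptyset\}$ need not be $\mathcal D$-positive. So the recursion has nothing concrete to adjoin, and the coordination problem you anticipate is not merely bookkeeping—it is the whole difficulty.

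The paper sidesteps this entirely by defining $\mathcal D'$ in one stroke rather than by a threat-by-threat recursion. Let $\mathcal O$ be the set of strictly increasing $V$-functions $f\in\mathbb N^D$ for some $D\in\mathcal D$, and for $D\in\mathcal D$, $f\in\mathcal O$, $a$ in the ideal generated by $\mathcal A$, set
\[
E(D,f,a)=\{\,n\in D\cap\dom(f):f(n)\notin a\,\}.
\]
The substantive lemma is that this family has the finite intersection property; this is proved directly from thinness and Ramseyness of $\mathcal D$ (one lines up the finitely many $f_k$'s along a $\mathcal D$-selector so that the values $\{f_k(j_n):k<\ell\}$ form a pairwise disjoint $V$-sequence of bounded-size sets, then applies thinness). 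One then takes $\mathcal D'$ to be \emph{any} ultrafilter in $V'$ extending this family. The verification of thinness over $V[G'\cap\mathbb L(\mathcal D)]$ is a rank descent much like what you sketch, but with the crucial difference that at a node $t$ the ``next decided element(s)'' of $\dot H_n$ are recorded by finitely many $V$-functions $f_j:D_t\to\mathbb N$ (this is where Ramseyness of $\mathcal D$ is used, via Proposition~\ref{rankR}); since each $E(D_t,f_j,a)\in\mathcal D'$, one can always pick a successor $k\in\Succ_{T'}(t)\cap\bigcap_j E(D_t,f_j,a)$ at which the newly decided elements all miss $a$. After at most $\ell$ such steps the full $\dot H_n$ is decided and disjoint from $a$.

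The conceptual point you are missing is that the ``threats'' need not be indexed by names $\dot H$; they are uniformly captured by $V$-functions $f$ (the coordinate functions of decided finite sets at a node), and those can all be handled by the single explicit family $\{E(D,f,a)\}$.
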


\begin{proof}
 Let $\mathcal O$ denote the set of strictly increasing functions 
    $f\in V$ such that $ f\in {\mathbb N}^D$ for some $D\in \mathcal D$.
     By the definition
    of thin over $V$, we may assume
    that $\mathcal A$ is closed under finite unions. 
 For each $D\in \mathcal D$, $a\in \mathcal A$ and  
    $f\in \mathcal O$, let $E(D,f,a) = 
    \{ n\in D\cap \dom(f) :  f(n) \notin a\}$.  
    We show that the family
      $\{ E(D,f,a) : f\in \mathcal O, D\in \mathcal D, a\in \mathcal A\}$ has the finite
      intersection property. It suffices to prove that if $\{ f_k : k< \ell \} $
      is  a finite subset of $\mathcal O$, $D\in \mathcal D$, and $a\in \mathcal A$, 
      then there is an $n\in D$ such that $f_k(n)\notin a$ for all $k<\ell$. 
      By shrinking $D$ we can assume that $D\subset\dom(f_k)$ for each $k<\ell$.
Choose any stictly increasing
 function $f\in V$ satisfying that for all $n\in \mathbb N$, %(
   $[f(n),f (n+1))\cap D\neq\emptyset$, %]
 and for all $j\in D$ with
  $j\leq f(n)$, $f_k(j) < f(n+1)$ for each $k<\ell$. Therefore, for each $n\in \mathbb N$ %(
and   $j\in D\cap  [f(n), f(n+1))$, we have that %]
   $f(n-1) < f_k(j) < f(n+1)$ for all $k<\ell$. %(
   By re-indexing, we can assume that $\bigcup \{ [f(3n), f(3n+1)) : n\in \mathbb N\}$ %]
   is in $\mathcal D$. Since $\mathcal D$ is Ramsey, we may choose $D_1\subset D$
   so that $ D_1 = \{ j_n : n\in \mathbb N\}$ and $f(3n)\leq j_n <   f(3n+1)$ for all $n\in
   \mathbb N$. Now define  $H_n = \{ f_k(j_n) : k < \ell\}$ and observe %(
   that   $H_n \subset [f(3n-1), f(3n+2))$ %]
   and so the sequence $\{ H_n :  n\in \mathbb N\}$ consists of pairwise disjoint
   sets. Since $\mathcal A$ is thin over $V$, there is an $n$ such
   that $H_n\cap a$ is empty. It follows that $j_n\in D$ and 
     $f_k(j_n) \notin a$ for each $k<\ell$ as required. 
     
     Let $\mathcal D'$ be any ultrafilter in $V'$ extending the family
     $\{ E(D,f,a) : f\in \mathcal O, D\in \mathcal D, a\in \mathcal
     A\}$.  Now we let $\{ \dot H_n : n \in \omega\}$ be a sequence in
     $V$ of $\mathbb L(\mathcal D)$-names that are forced by some
     $T_0 \in \mathbb L(\mathcal D)$ to be pairwise disjoint and of
     cardinality at most $\ell\in\omega$.  Let $a$ be any element of
     $\mathcal A$ and $ T_0' \in \mathbb L(\mathcal D')$ be any
     condition stronger than $T_0$.  We prove there is an extension
     $T_0'\supset T_1'\in \mathbb L(\mathcal D')$ and an $n\in \omega$
     such that $T_1'\Vdash \dot H_n\cap a$ is empty.  Let
     $t_0 = \sakne(T_0)$ and for each $1<n\in \omega$, let $H_{n,0}$
     be the maximal set such that there is a
     $T_n \in \mathbb L(\mathcal D)$ with
     $T_n\Vdash H_{n,0}\subset \dot H_n$ and $\sakne(T_n) = t_0$.
     There is a $D_0\in \mathcal D$ so that each element of the
     sequence $\{ H_{n,0} : n\in D_0\}$ has the same
     cardinality. Since we can assume that
     $D_0\subset \Succ_{T_0}(t_0)$, it follows that the elements of
     $\{ H_{n,0} : n \in D_0\}$ are pairwise disjoint. Choose any
     $1<n\in D_0$ so that $H_{n,0}\cap a $ is empty.  If
     $T_n \Vdash \dot H_n = H_{n,0}$, then we are done because $T_n$
     and $T_0'$ have the same stem, and so are compatible.  Let
     $\ell' \leq \ell$ be the value such that
     $T_n\Vdash | \dot H_n\setminus H_{n,0}| = \ell'$ and let
     $E_0 = \{ T\in \mathbb L(\mathcal D): \sakne (T)\notin T_n \
     \mbox{or} (\exists j) T \Vdash j\in \dot H_n\setminus H_{n,0}\}$.
     Since $E_0$ is a dense subset of $\mathbb L(\mathcal D)$, we have
     the associated rank function $\rho_{E_0}$ where for $t\in T_n$,
     $\rho_{E_0}(t) = 0$ implies that there is a $T\in E_0$ with
     $\sakne(T)=t$ and $j\in \mathbb N\setminus H_{n,0}$ such that
     $T\Vdash j\in \dot H_n$.  By the maximality assumption on $T_n$,
     we have that $\rho_{E_0}( t_0) > 0$. If $\rho_{E_0}(t_0)>1$, then
     by Proposition \ref{rankR}, there is a $k_0$ such that
     $1\leq \rho_{E_0}(t_0^\frown k_0) < \rho_{E_0}(t_0)$ and
     $t_0^\frown k_0 \in T_0'$. By repeating this step finitely many
     times, we can find a $t_1\in T_0'$ such that $\rho_{E_0}(t_1)$ is
     equal to $1$.  We may assume that $\rho_{E_0}(t_1^\frown k) = 0$
     for all $k\in \Succ_{T_0}(t_1)$.  For each
     $k\in \Succ_{T_0}(t_1)$, let $H_{n}(t_1^\frown k)$ be the maximal
     (non-empty) set of $j$ such that there is some condition in
     $\mathbb L(\mathcal D)$ with stem equal to $t_1^\frown k$ that
     forces $H_{n}(t_1^\frown k)\subset \dot H_n\setminus
     H_{n,0}$. There is some $D_{t_1}\in \mathcal D$ and
     $\ell_t\in\omega$ such that
     $\{ H_n(t_1^\frown k) : k\in D_{t_1}\}$ all have cardinality
     $\ell_t$.  For each $j <\ell_t$, define the function $f_j $ with
     domain $ D_{t_1} $ such that $f_j(k)$ is in $H_n(t_1^\frown k)$
     and, for each $k\in D_{t_1}$, $\{ f_j(k) : j<\ell_t\}$ enumerates
     $H_n(t_1^\frown k)$ in increasing order. By shrinking $D_{t_1}$
     we can assume that each $f_j\restriction D_{t_1}$ is either
     constant or is strictly increasing.  Since $\rho_{E_0}(t_1) > 0$,
     $f_0$ is not constant. To see this, assume that $f_0(k) = m$ for
     each $k\in D_{t_1}$. For each $k\in D_{t_1}$, choose a condition
     $T^k\in \mathbb L(\mathcal D)$ so that
     $\sakne(T^k) = t_1^\frown k$ and $T^k\Vdash m\in \dot H_n$. But
     now, the contradiction is that $\bigcup\{ T^k : k\in D_{t_1}\}$
     can be shown to be a condition with stem equal to $t_1$ that
     forces that $m\in \dot H_n$.  Choose any $k$ in the non-empty set
     $\Succ_{T_0'}(t_1) \cap \bigcap \{ E(D_{t_1}, f_j, a) :
     j<\ell'\}$ and set $t_2 = t_1^\frown k\in T_0'$.  Now define
     $H_{n,1} = H_{n,0}\cup H_n(t_1^\frown k)$ and choose
     $T_2\subset T_0$ as above so that $\sakne(T_2) = t_2$ and
     $T_2\Vdash H_{n,1}\subset\dot H_n$.  Define $E_2$ analogous to
     how we defined $E_0$ so that for $t\in (T^k)_{t_2}$,
     $\rho_{E_2}(t) = 0$ if and only if there is a condition with stem
     $t$ that forces some $j$ to be in $\dot H_n\setminus H_{n,1}$.
     We again note that when we proved that each $f_j$ ($j<\ell'$)
     above was strictly increasing, we have also shown that
     $\rho_{E_2}(t_2)>0$.  Since $H_{n,1}$ is a proper extension of
     $H_{n,0}$ and is also disjoint from $a$, we can repeat this
     argument finitely many (at most $\ell$) times until we have found
     an element $t\in T_0'$ which has a stem preserving extension that
     forces $\dot H_n$ is disjoint from $a$.
\end{proof}

\begin{corollary}
 Suppose that $\az \in \AP$ and let\label{laverstep}
   $|P^{\az}_\kappa|\leq \theta=\theta^{\aleph_0} < \lambda$,
   then there is a $\bz\in \AP$ and a sequence
  $\{ {\mathcal D_i} : i<\kappa\}$ satisfy that, for $i<j<\kappa$,
 \begin{enumerate}
\item $\az \lap 0\bz$,
 \item $  {\mathcal D_i}\subset \wp(~\mathbb N, P_i*
\Fn(i{+}1\times\theta,2)~)$,
 \item ${\mathcal D_i}\subset {\mathcal D_j}$,
 \item   
    $P_i*\Fn(i{+}1\times\theta,2)$ forces that $\mathcal D_i$ 
    is   a Ramsey ultrafilter   on $\mathbb N$, 
\item 
    $P^{\bz}_i = P^{\az}_i * \Fn(i\times\theta,2)*\mathbb L({\mathcal D_i  })$.
   \end{enumerate}
\end{corollary}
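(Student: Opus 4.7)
The plan is to build $\bz$ and the sequence $\{\mathcal D_i : i<\kappa\}$ simultaneously by transfinite recursion on $i<\kappa$, keeping $\mathcal A^{\bz}_i = \mathcal A^{\az}_i$ throughout so that clauses~(1), (2), and~(5) are true by design and the only real work is the thinness clause of Definition~\ref{ap}. The central engine will be Lemma~\ref{Laverthin}, which is calibrated exactly to take a Ramsey ultrafilter $\mathcal D$ lying in a small model $V$, enlarge it to an ultrafilter $\mathcal D'$ in a larger model $V'$, and guarantee that $\mathbb L(\mathcal D')$ over $V'$ does not destroy the thinness over $V[\mathbb L(\mathcal D)]$ of any family that was thin over $V$.

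Concretely, I would first absorb the auxiliary Cohen bits into the chain: by Definition~\ref{cohenstep} combined with Proposition~\ref{cohen}, replacing $\az$ with its Cohen$^{\theta}$-extension is a harmless $\lap 0$-move that places $P^{\az}_i * \Fn(i\times\theta,2)$ into the chain and preserves thinness of each $\mathcal A^{\az}_i$ over the corresponding extension. I then construct $\mathcal D_i$ recursively. At the base $i=0$, the proposition just before Definition~\ref{deflaver} supplies a Ramsey ultrafilter in the extension by $P^{\az}_0 * \Fn(1\times\theta,2)$, extending the cofinite filter; call this $\mathcal D_0$. At a successor step, with $\mathcal D_i$ already Ramsey in the extension by $P^{\az}_i * \Fn(i{+}1\times\theta,2)$, I pass to the extension by $P^{\az}_{i+1}*\Fn(i{+}2\times\theta,2)$; since $\mathcal A^{\az}_i$ is thin over the smaller model, Lemma~\ref{Laverthin} yields an ultrafilter $\mathcal D_i'\supset \mathcal D_i$ preserving that thinness through the corresponding Laver extension, and the fresh Cohen block at coordinate $i{+}1$ then promotes $\mathcal D_i'$ to a Ramsey ultrafilter $\mathcal D_{i+1}$ via the same proposition preceding Definition~\ref{deflaver}. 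At a limit $i$, I take $\mathcal D_i = \bigcup_{j<i} \mathcal D_j$ and use the fresh Cohen block at coordinate $i$ to upgrade this (merely) filter to a Ramsey ultrafilter.

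The verification that $\bz\in\AP$ then assembles pieces I have already deployed: continuity and $\cprec$ of the chain $\{P^{\bz}_i : i<\kappa\}$ follow from Lemma~\ref{successorMatrix}, from Lemma~\ref{laver} applied with $\mathcal D_i\subset\mathcal D_{i+1}$ (its side condition is automatic because each $\mathcal D_i$ is an ultrafilter in its own ambient model), and from continuity of the $P^{\az}$- and Cohen-chains together with the union definition of $\mathcal D_i$ at limits; each $\mathbb L(\mathcal D_i)$ is ccc because it is $\sigma$-centered. Thinness of $\mathcal A^{\bz}_i$ over the extension by $P^{\bz}_i$ is precisely what was arranged by Lemma~\ref{Laverthin} at the successor stage $i\to i+1$. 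The main obstacle is the three-way coordination at that successor step: the new $\mathcal D_{i+1}$ must simultaneously extend $\mathcal D_i$ (to keep the $\cprec$-chain via Lemma~\ref{laver}), preserve thinness of $\mathcal A^{\az}_i$ through Laver forcing (to keep $\bz\in\AP$), and be Ramsey itself (so the recursion continues). These three demands are compatible precisely because the $(i{+}1)$-th Cohen block adds genericity that is fresh relative to the Laverthin-output ultrafilter $\mathcal D_i'$, which explains and requires the offset of one between the indices in clauses~(2) and~(5).
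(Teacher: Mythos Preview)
The paper states this corollary without proof; it is meant to be an immediate consequence of the machinery just developed (Proposition before Definition~\ref{deflaver} for producing Ramsey ultrafilters from Cohen reals, Lemma~\ref{laver} for the $\cprec$-chain, and Lemma~\ref{Laverthin} for preserving thinness at the successor step). Your recursion is exactly the intended argument, and your identification of the key bookkeeping point---that the $(i{+}1)$st Cohen block must remain fresh so that the Laverthin output $\mathcal D_i'$ can be upgraded to Ramsey---is correct.

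One small point worth making explicit: when you apply Lemma~\ref{Laverthin} at the successor step you need $\mathcal A^{\az}_i$ to be thin over $V[P^{\az}_i*\Fn(i{+}1\times\theta,2)]$ in the model $V[P^{\az}_{i+1}*\Fn(i{+}1\times\theta,2)]$ (i.e.\ before adjoining the fresh block), not merely in $V[P^{\az}_{i+1}*\Fn(i{+}2\times\theta,2)]$ as Proposition~\ref{cohen} literally gives. This intermediate thinness is exactly the content of Lemma~\ref{smallccc} (or the remark following it) applied with $\dot Q=\Fn(i{+}1\times\theta,2)$, so the gap is only cosmetic.
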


\begin{definition}
 For any $\az\in \AP$ we will say\label{laverdef}
  that $\bz$ is an $\mathbb L(\vec{\mathcal D})$-extension
 of $\az$ if there is a sequence $\{ \mathcal D_i : i<\kappa\}$ such that for all $i<j<\kappa$, 
 
\begin{enumerate}
\item  $\az \lap 0 \bz$,
 \item $\mathcal D_i \subset \wp(\mathbb N,P^{\az}_i)$,
 \item $\mathcal D_i$ is forced by $P^{\az}_i$ to be a Ramsey ultrafilter
 on $\mathbb N$, 
 \item $\mathcal D_i \subset \mathcal D_j$, 
 \item $P^{\bz}_i = P^{\az}_i * \mathbb L(\mathcal D_i)$.
\end{enumerate}
\end{definition}

\section{$\pi p(\mathfrak U) \leq \kappa $ and 
$\mathfrak b =   \mathfrak s = \lambda$} 

Fix a 1-to-1 function $h$ from $\lambda$ onto
 $H(\lambda)$.
Recall that $\{ X_\alpha : \alpha \in E\}$ is the
 $\diamondsuit$-sequence on $\lambda$ as
 in Hyp$(\kappa,\lambda)$.

\begin{theorem}
Assume Hyp$(\kappa,\lambda)$.
 There\label{mainAP} is a sequence $\{ \az_\alpha , \zeta_\alpha :
 \alpha\in \lambda\}$ 
 such that for each limit $\delta\in \lambda$
 
\begin{enumerate}
\item the sequence $\{ \az_\alpha :\alpha < \delta\}$ is  $\lap
  *$-increasing, 
  $\{ \zeta_\alpha : \alpha <\delta\}$ is non-decreasing,
and   $\zeta_\delta \in \lambda$ is the supremum of $\{ \zeta_\alpha :
\alpha <\delta\}$, 
 \item if $\delta\notin E$, 
 the sequence $\{ \az_\alpha : \alpha\in \acc(C_\delta) \cup
 \{\delta\}\}$ is   a
    $\lap 0$-increasing continuous chain,
    \item if $\delta\in E$ and\label{iultra}
     $\mathcal E_\delta = \{ h(\xi) : \xi\in X_\delta\}$ is
    a maximal 
   subset of $\wp(\mathbb N,  P^{\az_\delta}_\kappa ) $ that is forced
   by $P^{\az_\delta}_\kappa$ to be a free
   ultrafilter on $\mathbb N$, then $\mathcal E_\delta\cap 
    \mathcal A^{\az_\delta}_i$ is not empty for all $i<\kappa$,
    \item $\az_{\delta+1}$ is the Cohen$^{\omega_1}$-extension of $\az_{\delta}$
    and $\zeta_{\delta+1} = \zeta_\delta$,

    \item if $\alpha = \delta+1$ then
     $\zeta_{\alpha+1} = \zeta_{\alpha}$ 
   and $\az_{\alpha+1} $ is the
      Cohen$^{\theta_\alpha}$-extension of $\az_{\alpha}$ where
   $\theta_\alpha= |P^{\az_{\alpha}}_\kappa|^{\aleph_0}$
 \item if $\alpha = \delta+2$, then $\zeta_{\alpha+1}=\zeta_\alpha$
      and $\az_{\alpha+1}$ is an\label{ilaver} 
      $\mathbb L(\mathcal D)$-extension of $\az_{\alpha}$,
      \item if $\alpha \in (\delta+2,\delta+\omega)$, then 
       $\zeta_{\alpha+1}$ is the minimal value strictly above $\zeta_\alpha$
       such that $\dot Q_{\alpha+1} = h(\zeta_{\alpha+1}-1)$
      has cardinality less than $\kappa$ and\label{iccc}   is a 
         $P^{\az_{\alpha}}_\kappa$-name of a poset that is forced to be ccc,
         and 
      $\az_{\alpha+1} = \az_{\alpha}*\dot Q_{\alpha+1}$ as in Definition
       \ref{cccstep}. 

\end{enumerate}
\end{theorem}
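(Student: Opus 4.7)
I proceed by transfinite recursion on $\alpha<\lambda$, starting with $\az_0\in\AP$ the trivial structure (all $P^{\az_0}_i$ trivial and every $\mathcal A^{\az_0}_i=\emptyset$) and $\zeta_0=0$. At each stage I either read off the recipe from clauses (4)--(7) or handle a limit via the $\square$- and $\diamondsuit$-machinery.

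\textbf{Successor stages.} At a successor $\alpha+1<\lambda$, follow the prescribed recipe: if $\alpha$ is a limit, apply Definition~\ref{cohenstep} to form the Cohen$^{\omega_1}$-extension; if $\alpha=\delta+1$ take the Cohen$^{\theta_\alpha}$-extension (the bound $\theta_\alpha<\lambda$ follows from $\lambda^{<\lambda}=\lambda$); if $\alpha=\delta+2$ apply Corollary~\ref{laverstep} to obtain an $\mathbb L(\vec{\mathcal D})$-extension; and if $\alpha\in(\delta+2,\delta+\omega)$, let $\zeta_{\alpha+1}$ be the minimal ordinal above $\zeta_\alpha$ with $h(\zeta_{\alpha+1}{-}1)$ a $P^{\az_\alpha}_i$-name (some $i<\kappa$) of a ccc poset of size $<\kappa$, and set $\az_{\alpha+1}=\az_\alpha*\dot Q_{\alpha+1}$ via Definition~\ref{cccstep} (in the other three cases put $\zeta_{\alpha+1}=\zeta_\alpha$). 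Each of these operations witnesses $\az_\alpha\lap 0\az_{\alpha+1}$, so transitively the chain remains $\lap 0$-increasing between consecutive limits.

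\textbf{Limit stages.} At a limit $\delta<\lambda$, exploit the $\square$-sequence: $C_\delta$ is a club in $\delta$ disjoint from $E$ and $C_\alpha=C_\delta\cap\alpha$ for every $\alpha\in\acc(C_\delta)$. A side induction using the uniqueness clause of Lemma~\ref{ctschain} (for $i=0$) shows that $\{\az_\beta:\beta\in C_\delta\}$ is already a $\lap 0$-increasing continuous chain. If $\delta\notin E$, apply Lemma~\ref{ctschain} with $i=0$ and $C=C_\delta$ to obtain $\az_\delta$, and put $\zeta_\delta=\sup_{\alpha<\delta}\zeta_\alpha$. If $\delta\in E$ (so $\cf(\delta)=\kappa$), first form the continuous closure $\az_\delta^{\star}$ via the same Lemma~\ref{ctschain}, and then test whether $\mathcal E_\delta=\{h(\xi):\xi\in X_\delta\}$ is a subset of $\wp(\mathbb N,P^{\az_\delta^{\star}}_\kappa)$ that is forced to be a maximal free ultrafilter. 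If yes, apply Lemma~\ref{APkappa} (whose Cohen$^{\omega_1}$-successor hypothesis is supplied by clause~(4) at every earlier limit) to this $\mathcal E_\delta$ to produce $\az_\delta$ satisfying clause~(3); otherwise set $\az_\delta=\az_\delta^{\star}$. Note that when $\delta\in E$, $\delta$ never becomes an accumulation point of any future $C_{\delta'}$ (since $C_{\delta'}\cap E=\emptyset$), so this choice need not mesh with the $\square$-coherence beyond $\delta$.

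\textbf{Main obstacle.} The principal delicacy is the coherence bookkeeping at limit stages: for Lemma~\ref{ctschain} to apply at $\delta$, the sequence $\{\az_\beta:\beta\in C_\delta\}$ constructed earlier must already be a $\lap 0$-increasing continuous chain. This holds only because the $\square$-coherence $C_\alpha=C_\delta\cap\alpha$ forces the stage-$\alpha$ construction (for $\alpha\in\acc(C_\delta)$) to have used exactly an initial segment of the very club employed at $\delta$, whereupon the uniqueness clause of Lemma~\ref{ctschain} pins down $\az_\alpha$ as the unique continuous closure of $\{\az_\xi:\xi\in C_\delta\cap\alpha\}$. With this coherence in place, $\diamondsuit(E)$ ensures that every candidate maximal free-ultrafilter name in the final extension is guessed on a stationary set of $\delta\in E$, at which Lemma~\ref{APkappa} fires to secure clause~(3)---ultimately delivering $\pi p(\mathcal U)\le\kappa$ for every ultrafilter in the extension.
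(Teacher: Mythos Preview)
Your outline follows the paper's approach, but there is a genuine gap in the limit-stage argument. You claim that a side induction shows $\{\az_\beta:\beta\in C_\delta\}$ is a $\lap 0$-increasing continuous chain, and then apply Lemma~\ref{ctschain} with $i=0$. This fails. The $\square$-coherence $C_\alpha=C_\delta\cap\alpha$ is only available for $\alpha\in\acc(C_\delta)$, so your uniqueness argument controls the chain only along the accumulation points. At a \emph{successor} point $\beta'$ of $C_\delta$ (that is, $\beta'\in C_\delta\setminus\acc(C_\delta)$) with predecessor $\beta$ in $C_\delta$, there may be an ordinal $\eta\in E$ with $\beta<\eta<\beta'$. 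At stage $\eta$ you invoked Lemma~\ref{APkappa}, whose output $\bz=\az_\eta$ satisfies $P^{\bz}_i=P^{\az_{\alpha_i}}_i$ for the enumeration $\{\alpha_i:i<\kappa\}$ of $\acc(C_\eta)$; hence for any $\xi>\alpha_0$ one has $P^{\az_\xi}_0\not\subset P^{\az_\eta}_0$, so $\az_\xi\lap 0\az_\eta$ fails and only $\lap *$ survives. Consequently you cannot conclude $\az_\beta\lap 0\az_{\beta'}$, and your application of Lemma~\ref{ctschain} with $i=0$ and $C=C_\delta$ is unjustified. (Your closing remark that $\delta\in E$ ``need not mesh with the $\square$-coherence beyond $\delta$'' is correct but insufficient: the problem is that $E$-stages contaminate the $\lap 0$-relation \emph{between} later non-$E$ ordinals as well.)

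The paper repairs this in two moves. First, it runs the induction along $\acc(C_\delta)$ rather than $C_\delta$: every $\alpha\in\acc(C_\delta)$ is a limit not in $E$, and by the inductive instance of clause~(2) at $\alpha$ together with $\acc(C_\alpha)=\acc(C_\delta)\cap\alpha$, the chain $\{\az_\alpha:\alpha\in\acc(C_\delta)\}$ is genuinely $\lap 0$-increasing continuous. Second, when $\acc(C_\delta)$ is not cofinal in $\delta$ (so $\delta$ has countable cofinality), one takes the countable tail $\{\alpha_n:n\in\omega\}$ of $C_\delta$ above $\alpha_0=\max(\acc(C_\delta))$, chooses $i<\kappa$ with $\az_{\alpha_n}\lap i\az_{\alpha_{n+1}}$ for all $n$ (possible since $\kappa>\omega$), and applies Lemma~\ref{ctschain} with this $i$; item~(2) of that lemma still yields $\az_{\alpha_0}\lap 0\az_\delta$, which is exactly what clause~(2) of the theorem demands.
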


\begin{proof}
 The proof is by induction on limit $\delta<\lambda$. We can define
  $\az_0$ so that $P^{\az_0}_i = \Fn(i\times \mathbb N, 2)$ 
  and $\mathcal A^{\az_0}_i = \emptyset$ for all
    $i<\kappa$.  Similarly, for $n\in \omega$, let $\az_{n+1}$ be
    the Cohen$^\omega$-extension of $\az_n$.  For all $n\in \omega$,
     set $\zeta_n = 0$.
     If $\delta$ is a limit
    ordinal not in $E$ and $\acc(C_\delta)$ is cofinal in $\delta$,
     then  $\{ \az_\alpha : \alpha \in \acc(C_\delta)\}$ is a $\lap 0$-increasing
     continuous chain, and so $\az_\delta$ is defined as in
     Proposition \ref{ctschain}. 
     If $\acc(C_\delta)$ is not cofinal in $\delta$, then let $\alpha_0$ be the 
     maximum element of $\acc(C_\delta)$, and let $C= \{ \alpha_n : n\in \omega\}$
     enumerate $C_\delta\setminus \alpha_0$. There is an $i<\kappa$
 such that $\{ \az_{\alpha_n} : n\in \omega\}$ is a $\lap i$-increasing continuous
 chain.  Again applying Proposition \ref{ctschain} produces $\az_\delta$ so
 that $\az_{\alpha_0}\lap 0\az_{\delta}$. Case (5) is handled
 by Proposition \ref{cohen} per Definition \ref{cohenstep}.  Similarly,
  per Definition \ref{laverdef} and by inductive assumption (5), Corollary
  \ref{laverstep} handles inductive step (6).  Inductive step (7)
  is handled by Definition \ref{cccstep} and Lemma \ref{smallccc}.
  
  Now we consider inductive step (3) when $\delta\in E$. By induction
  hypothesis (1), we have that $\{  \az_\alpha : \alpha \in \acc(C_\delta)\}$
  is a $\lap 0$-increasing chain. Let $P$ denote the poset
    $\bigcup \{ P^{\az_\alpha}_\kappa : \alpha\in \acc(C_\delta)\}$. 
    We recall from Lemma \ref{nopseudo}
    that $P^{\az_\beta}_\kappa\cprec P$ for all $\beta <\delta$.
   If 
  $\{ h(\xi) : \xi \in X_\delta\} $ is a subset of $\wp(\mathbb N, P)
  $ and is forced to have the finite intersection property, 
  we can use Zorn's Lemma to enlarge it to a maximal such family.
  Otherwise,  choose $\mathcal E_\delta\subset \wp(\mathbb N,P)$
   to be any maximal family which is forced to have the 
   finite intersection property.
   Since $\wp(\mathbb N,P)$ consists only of sets that are forced to be
   infinite, $\mathcal E_\delta$ is forced to be a free filter.  We
   prove that $\mathcal E_\delta$ is forced to be an ultrafilter.
   Assume that 
   $\dot b$ is any canonical $P$-name of a subset of $\mathbb N$
   and that $p\in P$ is any condition that forces $\dot b$ meets
   every member of $\mathcal E_\delta$. 
There is a $\dot e\in \wp(\mathbb N, P)$
   such that $p\forces{P}{\dot b=\dot e}$ and, for all $q\in P$
   that are incomparable with $p$, $q\forces{P}{\dot e = \mathbb N}$. 
   Clearly then $1_P$ forces that $\mathcal E_\delta\cup \{ \dot e\}$
   has the finite intersection property.  By the maximality of $\mathcal E_\delta$,
     $\dot e\in \mathcal E_\delta$. This proves that 
     $p\Vdash \dot b\in \mathcal E_\delta$.   
     Now apply Lemma \ref{APkappa}.
     
 This completes the proof. 
\end{proof}

\begin{theorem} 
  Assume Hyp$(\kappa,\lambda)$. There is a ccc poset $P$
  forcing\label{b=s=lambda}  
  that $\mathfrak s = \mathfrak b = \lambda$, $MA(\kappa)$, and
  $\pi p(\mathcal U) \leq \kappa$ for all free ultrafilters
  $\mathcal U$ on $\mathbb N$.
\end{theorem}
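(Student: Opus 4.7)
The plan is to set $P = \bigcup_{\alpha<\lambda} P^{\az_\alpha}_\kappa$, the direct limit of the $\lap *$-chain produced in Theorem~\ref{mainAP}; as an increasing $\cprec$-union of ccc posets, $P$ is itself ccc. Let $G$ be $V$-generic for $P$ and put $G_\alpha = G\cap P^{\az_\alpha}_\kappa$. Three of the four conclusions will fall out by cardinality-capture, and the fourth is the essential content, carried by the $\diamondsuit(E)$-sequence.

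First I would note that every countable $P$-name is a $P^{\az_\alpha}_\kappa$-name for some $\alpha<\lambda$ (by ccc), and that any family of fewer than $\lambda$ such names lies below a common stage (by regularity of $\lambda$). The Cohen$^{\theta_\alpha}$-steps of clause (5) force $2^{\aleph_0}\geq\lambda$ while $|P|\leq\lambda$ caps $\mathfrak c$, so $\mathfrak c=\lambda$. For $\mathfrak b=\lambda$, given any name $\dot g$ for an element of $\mathbb N^{\mathbb N}$, capture $\dot g$ in some $V[G_\alpha]$; by Proposition~\ref{sandb} the Laver real added at the next $\mathbb L(\vec{\mathcal D})$-step (clause (6)) dominates $\dot g$, so no family of fewer than $\lambda$ functions is unbounded. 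For $\mathfrak s=\lambda$, given $\mathcal A\subset[\mathbb N]^{\aleph_0}$ with $|\mathcal A|<\lambda$, capture $\mathcal A$ in some $V[G_\alpha]$; the Ramsey ultrafilter $\mathcal D$ used at the next Laver step decides each $A\in\mathcal A$, so its pseudo-intersection (the Laver real) is almost contained in or almost disjoint from each such $A$, showing $\mathcal A$ is not splitting. $\mathrm{MA}(\kappa)$ follows from the bookkeeping of clause (7): every ccc $P$-name of size $<\kappa$ equals $h(\xi)$ for some $\xi<\lambda$ and is therefore forced with at some later stage, whose generic meets any prescribed family of $\leq\kappa$ dense sets (captured at a common intermediate stage by regularity of $\lambda$).

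For the crucial inequality $\pi p(\mathcal U)\leq\kappa$, let $\dot{\mathcal U}$ be a $P$-name forced to be a free ultrafilter on $\mathbb N$. By a mixing argument I would select $\mathcal W\subseteq\wp(\mathbb N,P)$ such that $1_P$ forces every element of $\dot{\mathcal U}$ to equal some element of $\mathcal W$. Put $X=h^{-1}[\mathcal W]\subseteq\lambda$; then $S=\{\delta\in E:X_\delta=X\cap\delta\}$ is stationary by $\diamondsuit(E)$. A standard closing-off argument (using that $h$ is a bijection onto $H(\lambda)$ and $\lambda^{<\lambda}=\lambda$) provides a club $C^*$ such that $h[\delta]\supseteq\wp(\mathbb N,P^{\az_\delta}_\kappa)$ for every $\delta\in C^*$. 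For $\delta\in S\cap C^*$, the family $\mathcal E_\delta=\{h(\xi):\xi\in X_\delta\}$ coincides with $\mathcal W\cap\wp(\mathbb N,P^{\az_\delta}_\kappa)$; combining the mixing with $P^{\az_\delta}_\kappa\cprec P$ shows that this is a maximal subfamily of $\wp(\mathbb N, P^{\az_\delta}_\kappa)$ forced by $P^{\az_\delta}_\kappa$ to be a free ultrafilter. Clause (3) of Theorem~\ref{mainAP} then yields $\mathcal E_\delta\cap\mathcal A^{\az_\delta}_i\neq\emptyset$ for every $i<\kappa$, and Lemma~\ref{nopseudo} shows $\mathcal E_\delta$ has no pseudo-intersection in $V[G]$. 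Since $|\mathcal E_\delta|\leq\kappa$ and $\mathcal E_\delta\subseteq\mathcal U$, this gives $\pi p(\mathcal U)\leq\kappa$.

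The main obstacle is the verification that $\mathcal E_\delta$ really is a \emph{maximal} family in $\wp(\mathbb N,P^{\az_\delta}_\kappa)$ forced by $P^{\az_\delta}_\kappa$ (rather than just by $P$) to be a free ultrafilter, for stationarily many $\delta$. This requires reconciling three facts: the diamond prediction anticipates all of $\mathcal W\cap h[\delta]$; the club $C^*$ makes $h[\delta]$ absorb every $P^{\az_\delta}_\kappa$-name in $\wp(\mathbb N,P^{\az_\delta}_\kappa)$; and the continuity of the chain at $\delta\in E$ (of cofinality $\kappa$), together with $P^{\az_\delta}_\kappa\cprec P$, lets us transfer the ultrafilter property from $V[G]$ down to the intermediate model $V[G_\delta]$.
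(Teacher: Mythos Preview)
Your proposal is correct and follows essentially the same route as the paper: both set $P=\bigcup_\alpha P^{\az_\alpha}_\kappa$, derive $\mathfrak b=\mathfrak s=\lambda$ from the Laver steps via Proposition~\ref{sandb}, obtain $MA(\kappa)$ from the bookkeeping in clause~(7), and for $\pi p(\mathcal U)\leq\kappa$ code the ultrafilter as a subset of $\lambda$, capture it at some $\delta\in E$ via $\diamondsuit(E)$ intersected with a suitable closing-off club, and invoke clause~(3) of Theorem~\ref{mainAP} together with Lemma~\ref{nopseudo}. One small correction: your claim $|\mathcal E_\delta|\leq\kappa$ is unjustified (since $|P^{\az_\delta}_\kappa|$ may well exceed $\kappa$), but this is harmless---clause~(3) hands you one element of $\mathcal E_\delta\cap\mathcal A^{\az_\delta}_i$ for each $i<\kappa$, and it is this $\kappa$-sized subfamily of $\mathcal U$ (rather than all of $\mathcal E_\delta$) that Lemma~\ref{nopseudo} shows has no pseudo-intersection, exactly as in the paper's conclusion.
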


\begin{proof}
 Let $\{ \az_\alpha , \zeta_\alpha : \alpha \in \lambda\}$ be the sequence 
 constructed in Theorem \ref{mainAP}. Let $P$ be the poset
  $\bigcup \{ P^{\az_\alpha}_\kappa : \alpha \in \lambda\}$. Since
  $\{ P^{\az_\alpha}_\kappa : \alpha \in \lambda\}$ is a strongly continuous
   $\cprec$-increasing chain of ccc posets, it follows that $P$ is ccc.
   Furthermore $\wp(\mathbb N , P)$ is equal to the union of the
   increasing sequence $\{\wp(\mathbb N, P^{\az_\alpha}_\kappa)
  :\alpha < \lambda\} $. It then follows immediately from condition (\ref{ilaver})
  that $\mathfrak s$ is forced to be $\lambda$. Similarly 
  by Proposition \ref{sandb}, $P$ forces that   $\mathfrak b = \lambda$. 
 Now we check that $P$ forces that $MA(\kappa)$ holds: that is,
  if $Q$ is any $P$-name of a ccc poset of cardinality less than $\kappa$
  and $\{ A_\xi : \xi < \mu\}$ is a family of maximal antichains of $Q$
  with $\mu<\kappa$, then there is a filter $G$ on $Q$ that meets each
   $A_\xi$. To show that this is forced by $P$, we may choose a
  $P$-name $\dot Q$ for $Q$ as well as $P$-name $\{ \dot A_\xi : \xi<\kappa\}$
  for the maximal antichains. We may assume that $1_P$ forces
  that $\dot Q$ is ccc.  Since $\dot Q$ has cardinality less than $\lambda$,
      there is an $\alpha < \lambda$ such that $\dot Q$, and each $\dot A_\xi$
        is a $P^{\az_\alpha}_\kappa$-name.  Choose any $\gamma<\lambda$
        so that $\dot Q*\Fn((\gamma,\gamma+\omega),2)$ is not in the list
          $\{ h(\zeta) : \zeta \leq \zeta_\alpha\}$ and let $h(\zeta') = \dot Q*\Fn((\gamma,
          \gamma+\omega),2)$. It follows from inductive conditions (1)
          and (\ref{iccc}), that the sequence $\{ \zeta_{\alpha} : \alpha\in \lambda\}$ 
          is unbounded in $\lambda$. So we may choose limit $\delta < \lambda$
          maximal so that $\zeta_\delta \leq \zeta'$. Since $\zeta_{\delta+3}=\zeta_\delta$
          and  
          $\zeta_{\delta+\omega}$ 
          is greater than $\zeta'$, there is a minimal $n\geq 3$ such that
            $\zeta'<\zeta_{\delta+n+1}$. 
            It should be clear that by inductive condition (\ref{iccc})
             that $\zeta'+1$ is equal to $\zeta_{\delta+n+1}$ and so
            $\dot Q_{\alpha_{\delta+n+1}}$ was chosen to be 
            $\dot Q*\Fn((\gamma,\gamma+\omega),2)$.  This ensures
            that $P^{\az_{\delta+n+1}}_\kappa$ forces that there is a filter
on $\dot Q$  that meets each $\dot A_\xi$. 

Now let $\dot {\mathcal  U}$ be a $P$-name of an ultrafilter on $\mathbb N$.
As we did in the inductive step (\ref{iultra}), we can let $\mathcal E$
be the set of all $\dot e\in \wp(\mathbb N, P)$ that are forced by 
 $1_P$ to be an element of $\dot {\mathcal U}$. 
   There is a cub $C\subset\lambda$
 such that for all $\delta\in S^\lambda_\kappa$, $\mathcal E\cap
  \wp(\mathbb N, P^{\az_\delta}_\kappa)$ is a maximal set
  that   is   forced by
   ${P^{\az_\delta}_\kappa}$ to be an ultrafilter on $\mathbb N$. 
   Now let $X = \{ \xi \in \lambda : h(\xi)\in \mathcal E\}$.
 We can pass to a cub subset $C'$ of $C$ so that for
 all $\delta\in C'\cap S^\lambda_\kappa$, 
 the set $\{ h(\xi) : \xi\in X\cap \delta\}$ is equal to 
     $\mathcal E\cap \wp(\mathbb N, P^{\az_\delta}_\kappa)$.   
     Now choose a $\delta\in E\cap C'$
      so that $X_\delta = X\cap \delta$.       It follows
      from inductive step (\ref{iultra}), that $\mathcal E\cap \mathcal
       A^{\az_\delta}_i$ is not empty for all $i<\kappa$.         
       By Lemma \ref{nopseudo},  $P$ forces that $\dot {\mathcal U}$
       has a subset of size $\kappa$ with no pseudo-intersection.
\end{proof}

\section{another ccc poset for raising $\mathfrak s$}

The proper 
poset $\mathcal Q_{Bould}$ is introduced in \cite{Boulder}
(also Sh:207 in the Shelah archive)
to establish the consistency of $\mathfrak b < \mathfrak s = \mathfrak
a$.  For special directed subfamilies $\mathcal D$ of $\mathcal
Q_{Bould}$,  there is a ccc poset denoted $Q(\mathcal D)$ that is
analogous to $\mathbb L(\mathcal E)$ for filters $\mathcal E$ on
$\mathbb N$ (see Definition \ref{bigQ}).
Let us note the important properties of $\mathcal Q_{Bould}$ shown
to hold in \cite{Boulder}. 
 The first is that it adds an unsplit real.

\begin{proposition}
If $\dot L$ is the generic subset of $\mathbb N$ added by\label{3.1}
 $\mathcal  Q_{Bould}$, then the set
 $\{ A \subset \mathbb N : A \in V \mbox{\ and\ } 
| \dot L\setminus A | < \aleph_0 \}$ is a free ultrafilter over
 $V\cap \mathcal P(\mathbb N)$.
\end{proposition}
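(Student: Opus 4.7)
The plan is to verify two properties. First, $\dot L$ must be forced to be infinite, so the resulting filter contains no cofinite complements of finite sets and is free. Second, for every $A\in V\cap\mathcal P(\mathbb N)$, one of $\dot L\setminus A$ or $\dot L\cap A$ must be forced to be finite, so the filter is maximal relative to $V\cap\mathcal P(\mathbb N)$. The first claim should follow by a routine density argument: since conditions in $\mathcal Q_{Bould}$ are of Laver/Mathias type and admit stem extensions to arbitrarily large integers, for each $n$ the set of conditions forcing some $k>n$ into $\dot L$ is dense.

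For the unsplitting property, the plan is to adapt the rank-function argument used in Definition \ref{rank} and Proposition \ref{rankR}. Given $A\in V\cap\mathcal P(\mathbb N)$ and a condition $q\in\mathcal Q_{Bould}$, let $E_A$ be the collection of extensions $q'\leq q$ whose branching past the stem lies entirely in $A$ or entirely in $\mathbb N\setminus A$. One defines a rank function on the tree of $q$, assigning rank $0$ to nodes realized as stems of members of $E_A$, and, for a node $t$, taking $\rho(t)$ to be minimal such that $\{k : \rho(t^\frown k)<\alpha\}$ belongs to the family of allowed branching sets above $t$. The key combinatorial input, supplied by \cite{Boulder}, is that these families satisfy a Ramsey-style partition property; induction on countable rank then yields that every node has rank $0$, so $q$ itself is compatible with some $q'\in E_A$, and $q'$ decides which side of the split captures $\dot L$ mod finite.

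The main obstacle is that the argument depends essentially on the detailed combinatorial structure of $\mathcal Q_{Bould}$ — the Ramsey-like properties of the directed families that index its branching sets — which is not reproduced in this excerpt. Since the statement is precisely the defining feature of $\mathcal Q_{Bould}$ as designed in \cite{Boulder}, the proposal effectively reduces to invoking the main structural theorem proved there; the role of this proposition in the present paper is simply to isolate the consequence needed in later sections.
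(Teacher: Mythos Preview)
The paper gives no proof of this proposition; it is stated as one of the properties ``shown to hold in \cite{Boulder}'', and your final paragraph correctly recognizes that the intended role here is simply a citation.

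That said, your sketched argument misidentifies the structure of $\mathcal Q_{Bould}$. You treat it as a Laver-type tree forcing with stems, branching nodes $t^\frown k$, and a rank function in the style of Definition~\ref{rank} and Proposition~\ref{rankR}. But $\mathcal Q_{Bould}$ is a Mathias-type poset: a condition is a pair $(u,T)$ with $u$ a finite set and $T=\{t_\ell:\ell\in\omega\}$ an $\omega$-sequence of logarithmic measures, not a tree. There is no node structure on which to run your rank induction. Moreover, your claim that the combinatorial structure ``is not reproduced in this excerpt'' is inaccurate: the paper gives the full definition of $\mathcal Q_{Bould}$ (via logarithmic measures, following \cite{Abraham}) immediately after stating the two cited propositions.

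With that definition in hand the unsplitting argument is direct and requires no rank machinery. Given $A\in V$ and a condition $(u,T)$ with $T=\{t_\ell:\ell\in\omega\}$, the defining property of a logarithmic measure yields, for each $\ell$, that either $h_{t_\ell}(\innt(t_\ell)\cap A)\geq h_{t_\ell}(\innt(t_\ell))-1$ or $h_{t_\ell}(\innt(t_\ell)\setminus A)\geq h_{t_\ell}(\innt(t_\ell))-1$. One side wins for infinitely many $\ell$; restricting the corresponding $t_\ell$'s to that side produces a condition $(u,T')\geq (u,T)$ with $\innt(u,T')$ contained in $A$ or in $\mathbb N\setminus A$, and by Proposition~\ref{centered} this forces $\dot L\setminus A$ (respectively $\dot L\cap A$) to be finite. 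The infinitude of $\dot L$ is immediate since any $(u,T)$ can be extended by adjoining to $u$ an element of $\innt(t_\ell)$ for arbitrarily large $\ell$.
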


The second is that the forcing does not add a dominating real. By
Lemma \ref{bkappa}, this property is needed if such a
$Q(\mathcal D)$ is to replace $\mathbb L(\mathcal E)$ in
constructing $\az$ in $\APv$.

\begin{proposition}
If $\dot f$ is a $\mathcal Q_{Bould}$-name of a
function in $\mathbb N ^\mathbb N$ and if $\dot L$ is the generic subset of
 $\mathbb N$ added by $\mathcal Q_{Bould}$, then there is a 
(ground model)
 $h\in \mathbb N^\mathbb N$ so that, for every infinite set
 $A\subset \mathbb N$ in the ground model, 
the  set $\{ n \in A : \dot f(n) < h(n)\}$ will
be forced to be infinite.
\end{proposition}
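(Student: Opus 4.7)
The plan is to exploit the fusion/pure-decision structure of $\mathcal Q_{Bould}$ developed in \cite{Boulder}, which is exactly what underpins the fact that this poset adds no dominating real. The central combinatorial input I would invoke is a pure-decision lemma for $\mathcal Q_{Bould}$: for every condition $p$ with stem $\sakne(p)$ and every $m \in \mathbb N$, there is a stem-preserving strengthening $p^{*}$ of $p$ (same stem, thinned body) such that $p^{*}$ decides $\dot f(m)$ to lie in a finite set $F(p,m) \subseteq \mathbb N$. Such a lemma is proved by a Ramsey-style induction on the tree-like body of $p$ and is implicit in the arguments of \cite{Boulder} showing that $\mathcal Q_{Bould}$ does not add a dominating real.

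With the pure-decision lemma in hand, I would construct $h \in \mathbb N^{\mathbb N} \cap V$ by diagonalization. Fix in the ground model a countable family $\{p_i : i \in \mathbb N\}$ of canonical conditions that is cofinal among stem-equivalence classes below any given rank, and set
\[
h(m) \;=\; 1 \;+\; \max\bigl\{ \max F(p_i,m) : i \leq m,\ |\sakne(p_i)| \leq m \bigr\}.
\]
Finiteness of the right-hand side follows from the pure-decision lemma applied to each of the finitely many $p_i$ relevant to stage $m$, so $h$ is a well-defined ground-model function.

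To verify the conclusion, fix a ground-model infinite $A \subseteq \mathbb N$, a condition $p \in \mathcal Q_{Bould}$, and $N \in \mathbb N$; I would produce a further extension $q \geq p$ and some $n \in A \setminus N$ with $q \Vdash \dot f(n) < h(n)$. By Proposition \ref{3.1}, $\dot L$ meets $A$ in an infinite set, so the collection of extensions of $p$ whose stem ends at an element of $A$ beyond $N$ is dense below $p$. Pick such a $q'$ with $n = \max\sakne(q') \in A \setminus N$; it is a stem-preserving refinement of some $p_i$ in the cofinal family, and a further stem-preserving strengthening decides $\dot f(n)$ to a value in $F(p_i,n)$, which by construction is strictly less than $h(n)$. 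Since $p$, $N$, and $A$ were arbitrary, $\{ n \in A : \dot f(n) < h(n)\}$ is forced to be infinite.

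The main obstacle is the pure-decision lemma itself: establishing it uniformly enough to guarantee both finiteness of the $F(p_i,m)$ and that the cofinal family $\{p_i\}$ covers (up to stem-preserving refinement) every extension that can arise in the final density argument. This is the combinatorial heart of \cite{Boulder}, and it is what distinguishes $\mathcal Q_{Bould}$ from posets such as Laver forcing (which add a dominating real) and explains why the full proper-forcing version $\mathcal Q_{Bould}$ is needed here rather than only the ccc variants $Q(\mathcal D)$.
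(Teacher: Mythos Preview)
The paper does not prove this proposition; it is quoted as a known property of $\mathcal Q_{Bould}$ from \cite{Boulder} (the almost-$\omega^\omega$-bounding property, treated also in \cite{Abraham}). So there is no argument in the paper to compare against, but your sketch has a genuine gap that is worth naming.

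The problem is the countable family $\{p_i\}$. You need every condition $q'$ arising in the final density step to be, up to stem-preserving refinement, below some $p_i$, so that the finite bound $F(p_i,n)$ applies. But $\mathcal Q_{Bould}$ is proper and \emph{not} $\sigma$-centered: two conditions $(u,T_1),(u,T_2)$ with the same first coordinate may be incompatible (let $\innt(T_1)$ lie in the even integers and $\innt(T_2)$ in the odds; by the definition of the ordering any common extension would need its $\innt$ contained in both). Hence no countable family of conditions is cofinal in the sense you require, and the bound $F(p,m)$ depends on the body $T_p$, not merely on the stem. In fact a single ground-model $h$ working below \emph{every} condition cannot exist in general: put a size-$\mathfrak c$ antichain of pure conditions (obtained from an almost disjoint family as above) in bijection with $\mathbb N^{\mathbb N}$ and let $\dot f$ equal the corresponding function below each antichain element; then every $h\in V$ is defeated below some condition. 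The intended content of the proposition is that below each condition $p$ there is an extension $q\geq p$ and an $h\in V$ (depending on $q$) with the stated property --- this is precisely almost-$\omega^\omega$-bounding.

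The standard argument is a fusion below a fixed $p$: iterate the pure-decision step along the body of $p$ and amalgamate using the logarithmic-measure calculus (Definition~\ref{Pmeas} and Proposition~\ref{afterPmeas}; compare also the $\dot f$-ready conditions of Lemma~\ref{getready}), producing a single $q\geq p$ together with $h$. Your diagonalization over a fixed countable list is the right shape for the $\sigma$-centered subposets $Q(\mathcal D)$ of Proposition~\ref{centered}, where conditions sharing a first coordinate \emph{are} compatible, but it does not transfer to the full $\mathcal Q_{Bould}$. (Minor point: $\sakne(\cdot)$ in this paper is defined only for the Laver trees of Definition~\ref{deflaver}; for $q\in\mathcal Q_{Bould}$ the stem is $u_q$.)
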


We adopt the elegant representation of this poset from
 \cite{Abraham}. Also 
many of the technical details 
for constructing ccc subposets 
 of this poset,
sharing the above mentioned properties,
are similar to the results in   \cite{FischerSteprans}.
The main tool is to utilize logarithmic measures.

\begin{definition}
A function $h$ is a logarithmic measure on a set
 $S\subset \mathbb N $ if $h$ is a function from $[S]^{<\aleph_0}$ into
 $\omega$ with the property that whenever $\ell\geq 0$ and
$h(a\cup b) \geq \ell+1$, then either
 $h(a)\geq\ell$ or $h(b)\geq\ell$. A pair $(s,h)\in \mathcal L_n$ 
if $s\in [\mathbb N]^{<\aleph_0}$ and $h$ is a logarithmic measure on $s$
 with $h(s) \geq n$.  The elements $e$ 
of $[\mathbb N]^{<\aleph_0}$ such
that $h(e)>0$ are called the positive sets. 
\end{definition}

\newcommand{\innt}{\mathop{int}}

When we discuss $t\in \mathcal L_1$, we use $\innt(t)$ and
$h_t$ to denote the pair where $t = (\innt(t) , h_t)$.
We say that a subset $e$ of $\innt(t) $ is $t$-positive
to mean that $h_t(e) > 0$. 
Note that if $(s,h)\in \mathcal L_n$ and $\emptyset \neq e\subset s$, 
then $(e,h\restriction [e]^{<\aleph_0})\in \mathcal L_{h(e)}$.

\begin{definition} The poset $\mathcal Q_{Bould}$ consists of
all pairs $(u,T)$ where
\begin{enumerate}
\item $u\in [\mathbb N]^{<\aleph_0}$,
\item $T = \{ t_\ell : \ell \in \omega\}$  is a sequence of 
members of $\mathcal L_1$ where for each $\ell$,
$\max(\innt(t_\ell)) < \min(\innt(t_{\ell+1}))$, and 
the sequence $\{  h_{t_\ell}(\innt(t_\ell)) : \ell \in \omega\}$
is monotone increasing and unbounded.
\end{enumerate}
 For each $(u,T)\in \mathcal Q_{Bould}$,
 let $\ell_{u,T}$ be the minimal $\ell$ 
 such that $\max(u)<  \min(\mathop{int}(t_\ell))$ and
 let   $\mathop{int}(u,T) = \bigcup \{ \mathop{int}(t_\ell) : 
\ell_{u,T}\leq  \ell \}$.

For $T_1 =  \{  t^1_\ell : \ell \in \omega\} $
and    $T_2 = \{   t^2_\ell : \ell \in \omega\}$ 
 with $(u_1,T_1), (u_2,T_2)\in \mathcal Q_{Bould}$,
the extension relation is
 defined by 
$(u_2, T_2)    \geq
 (u_1,  T_1) $ (stronger) 
 providing 
\begin{enumerate}
\item $u_2\supset u_1$ and $u_2\setminus u_1 $ is contained in 
     $\mathop{int}(u_1,T_1)$,
\item  $\mathop{int}(u_2,T_2) \subset \mathop{int}(u_1,T_1)$ 
\item there is a sequence of finite subsets of $\omega$,
 $\langle B_k : k\in \omega\rangle$, such that
for each $k \geq \ell_{u_2,T_2}$,
 $\max(B_k) <  \min(B_{k+1})$  and
 $\mathop{int}(t^2_k) \subset \bigcup \{\mathop{int}(t^1_\ell ) : \ell
 \in B_k\}$,
\item for every $k\geq\ell_{u_2,T_2}$ 
and every $t^2_k$-positive $e\subset
  \mathop{int}(t^2_k )$ there is a $j\in B_k$ such
that $e\cap \mathop{int}(t^1_j)$  is $t^1_j$-positive.
\end{enumerate}
For a finite subset $\mathcal D$ of $\mathcal Q_{Bould}$ and an
element   $t$ of $\mathcal L_1$, we say that $t$ is built from
$\mathcal D$ if there is a $q= (\emptyset,T_q)\in \mathcal Q_{Bould}$
with  $t\in T_q$ such that $q\geq (\emptyset, T_{q'})$
 for each $q'=(u_{q'},T_{q'})\in \mathcal D$.
\end{definition}

\newcommand{\Pure}{{\mathcal P}_{Bould}}

\begin{definition}
 The elements $q \in \mathcal Q_{Bould}$ of the form $(\emptyset, T_q)$
 are called pure conditions.  We let $\Pure$ denote
 the set of all pure conditions in $\mathcal Q_{Bould}$.
 A family $\mathcal D\subset \Pure$ is
 finitely compatible if each finite subset of $\mathcal D$ has an
 upper  bound  in $\mathcal Q_{Bould}$.  The family $\mathcal D$ is finitely
 directed if each finite subset has  an upper bound in $\mathcal D$.
\end{definition}

For an element $q \in \mathcal Q_{Bould}$,
we let $u_q$ and $T_q$ denote the elements with $q = (u_q,T_q)$.
We also use $\innt(q)$ for $\innt(u_q,T_q)$. 
The fact
that the elements of $T_q$ are enumerated by $\omega$ is unimportant.
 It will be convenient to adopt the convention that for an infinite set
  $L\subset \omega$, 
 the
  sequence $(\emptyset , \{ t_\ell : \ell\in L\})$ is a pure condition
so long as $(\emptyset , \{ t'_n : n\in \omega\})\in\mathcal Q_{Bould}$
 where $\{ \ell_n : n\in \omega\}$ is the increasing enumeration of $L$
 and $t_n' = t_{\ell_n}$ for each $n\in \omega$.

\begin{definition}
If $\mathcal D$ is a finitely directed set of pure conditions,
 then define $Q(\mathcal D)$ to be the subposet
  $\{ (u,T)  : u\in [\mathbb N]^{<\aleph_0}, (\emptyset , T)
  \in \mathcal D\}$\label{bigQ}  of $\mathcal Q_{Bould}$.
  We let  $\dot L_{\mathcal D}$ denote the $Q(\mathcal D)$-name
  $\{ (n,q) : q\in Q(\mathcal D)\ ,\ n\in u_q\}$.
\end{definition}

\begin{proposition} If $\mathcal D\subset \Pure$ is  finitely
  directed, then  $Q(\mathcal D) $  is\label{centered}
  a   $\sigma$-centered poset. Each $q\in Q(\mathcal D)$ forces
  that $\dot L_{\mathcal D}\setminus \innt(T_q)\subset u_q$.
\end{proposition}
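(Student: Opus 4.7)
The plan is to establish $\sigma$-centeredness by partitioning $Q(\mathcal D)$ according to the finite stem: for each $u \in [\mathbb N]^{<\aleph_0}$, set $Q_u = \{ (u, T) : (\emptyset, T) \in \mathcal D \}$. Since $[\mathbb N]^{<\aleph_0}$ is countable and $Q(\mathcal D) = \bigcup_u Q_u$, it suffices to prove each $Q_u$ is centered, i.e., every finite subset of $Q_u$ has a common upper bound in $Q(\mathcal D)$.

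Given $(u, T_1), \dots, (u, T_n) \in Q_u$, finite directedness of $\mathcal D$ furnishes $(\emptyset, T) \in \mathcal D$ with $(\emptyset, T) \geq (\emptyset, T_i)$ for every $i \leq n$; let $\{B^i_k : k \in \omega\}$ be the associated witness sequence. The naive guess that $(u, T)$ is already a common upper bound can fail, because an element of $\innt(u, T)$ may land in the ``boundary'' interval $\innt(t^i_{\ell_{u, T_i} - 1})$ that straddles $\max(u)$, and such elements are excluded from $\innt(u, T_i)$. To repair this, choose $M \in \innt(T)$ with $M > \max(u)$ and $M > \max\innt(t^i_{\ell_{u, T_i} - 1})$ for every $i \leq n$ with $\ell_{u, T_i} > 0$ (possible because $\min\innt(t_k) \to \infty$), and set $r = (u \cup \{M\}, T)$. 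Since $(\emptyset, T) \in \mathcal D$, we have $r \in Q(\mathcal D)$.

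To verify $r \geq (u, T_i)$, observe that every $\innt(t^i_\ell)$ with $\ell < \ell_{u, T_i}$ lies at or below $\max\innt(t^i_{\ell_{u, T_i} - 1}) < M$, so any $x \in \innt(T) \subset \innt(T_i)$ with $x > \max\innt(t^i_{\ell_{u, T_i} - 1})$ must sit in some $\innt(t^i_\ell)$ with $\ell \geq \ell_{u, T_i}$. This yields clause~(1) of the extension relation ($M \in \innt(u, T_i)$) and clause~(2) ($\innt(r) \subset \innt(u, T_i)$, since every $\innt(t_k)$ contributing to $\innt(r)$ has minimum above $M$). For clauses~(3) and~(4), take $B_k = B^i_k \cap [\ell_{u, T_i}, \infty)$ for $k \geq \ell_{u \cup \{M\}, T}$; this sequence still covers $\innt(t_k)$ and still witnesses positivity, because any $t_k$-positive $e \subset \innt(t_k)$ lies above $M$ and is therefore disjoint from the discarded intervals.

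For the second assertion, fix $q = (u_q, T_q) \in Q(\mathcal D)$ and a generic filter $G \ni q$. For any $q' \in G$, upward directedness of $G$ yields $q^* \in G$ with $q^* \geq q$ and $q^* \geq q'$; clause~(1) applied to $q^* \geq q$ gives $u_{q^*} \setminus u_q \subset \innt(u_q, T_q) \subset \innt(T_q)$, hence $u_{q'} \subset u_{q^*} \subset u_q \cup \innt(T_q)$. Taking the union over $q' \in G$ yields $\dot L_{\mathcal D}[G] \subset u_q \cup \innt(T_q)$, which is the stated inclusion. The main obstacle lies in the first part: recognizing that $\innt(u, T) \subset \innt(u, T_i)$ can fail for the naive choice $(u, T)$, and that enlarging $u$ by a single well-chosen $M$ suffices to push all further action safely above the boundary intervals of every $T_i$, after which the four clauses of the extension relation follow directly from the corresponding clauses for $(\emptyset, T) \geq (\emptyset, T_i)$.
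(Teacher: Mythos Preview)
The paper states this proposition without proof, so there is nothing to compare against directly. Your argument is correct. The partition $Q(\mathcal D)=\bigcup_u Q_u$ is the natural one, and your observation that the naive upper bound $(u,T)$ can fail clause~(2) of the extension relation is a genuine point: if $\max(u)$ lies strictly inside some $\innt(t^i_\ell)$, then the first block $\innt(t_0)$ of $T$ may sit above $\max(u)$ yet still inside that same $\innt(t^i_\ell)$, which is excluded from $\innt(u,T_i)$. Your repair---enlarging $u$ by a single point $M\in\innt(T)$ chosen above every such boundary block---is exactly right, because it keeps the pure part $T$ unchanged (so the new condition stays in $Q(\mathcal D)$) while pushing $\ell_{u\cup\{M\},T}$ far enough that all remaining blocks of $T$ land in $\innt(u,T_i)$ for every $i$. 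The verification of clauses~(3) and~(4) via $B_k=B^i_k\cap[\ell_{u,T_i},\infty)$ is fine: disjointness of the $\innt(t^i_\ell)$ guarantees these restricted sets are nonempty and still witness positivity. The second assertion follows immediately from clause~(1) of the extension relation as you wrote.
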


It follows from the results in 
\cite{FischerSteprans} 
 that there is a  $\Fn(2^\omega,2)$-name
$\dot{\mathcal D}$ that is  forced to be a
 finitely directed subset of 
 $\Pure$ with the property that $\Fn(2^\omega,2)*Q(\dot{\mathcal
   D})$ will add an unsplit real and not add a dominating real
 (see Lemma \ref{ultrasplit} and Lemma \ref{getready}).
These
 will be the  factor   posets we will use in
 place of $\mathbb L(\mathcal D)$ in the construction of members of
 $\APv$.
 We will also need an analogue of Lemma \ref{laver} and we now
introduce
 a condition on $\mathcal D$ that will ensure
 that $Q(\mathcal D)  <_V Q(\mathcal D_2)$ when $\mathcal D_2\supset
 \mathcal D$  in a forcing extension of $V$
 (see Proposition \ref{q207}).

\begin{definition}
Let $\mathcal D\subset \Pure$ be  directed mod finite. 
For a set
$E=  \{ (u_n, T_n) : n\in \omega\}\subset  Q(\mathcal D)$
we  say that 
$(\emptyset, \{  t_\ell :  \ell\in \omega\})\in\Pure$, 
is a mod finite meet of $E$ if,  
for each $0<\ell\in \omega$, $w\subset \max(\innt(t_{\ell-1}))$, 
 and   $h_\ell$-positive $e\subset 
 \mathop{int}(t_\ell)$, there is an $n <\min{\innt(t_{\ell+1})}
 $ and a $w_e\subset e$
such that
$(w\cup w_e ,\{t_m: m>\ell\})\geq (u_n, T_k)$ for each $  k\leq \max\{n
,\ell,\max(\innt(t_{\ell-1}))\}$.

A set of pure conditions
$\mathcal D$ is $\aleph_1$-directed mod finite\label{direct}
if it is directed mod finite and each predense subset
of $Q(\mathcal D)$ has a mod finite meet in $\mathcal D$.
\end{definition}

\begin{lemma}
Suppose that   $\{(u_n,T_n) : n \in \omega\}$ is a
 subset   of $\mathcal Q_{Bould}$
 and let $T$ be a mod finite\label{whymeet}
 meet. 
 Then, for each $w\in [\mathbb N]^{<\aleph_0 }$, 
     $\{ (u_n, T_n ) : n\in \omega\}$ is 
  predense below   
  $(w,T)   $ in all of $\mathcal Q_{Bould}$.
\end{lemma}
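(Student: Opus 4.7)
The plan is to establish predensity by an explicit construction: given an arbitrary $(v,S) \geq (w,T)$ in $\mathcal Q_{Bould}$ with $S = \{s_k : k\in\omega\}$, I will locate an index $n$ and build a common extension $(v^{*},T^{*})$ of $(v,S)$ and $(u_n,T_n)$. The mod finite meet property supplies an extension of $T$ dominating some $(u_n,T_n)$; the task is to make that extension also lie below $(v,S)$, which I plan to arrange by taking $T^{*}$ to be a tail of $S$ rather than a tail of $T$.

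First I choose the relevant indices. Since $v$ is finite and $v\setminus w \subset \innt(w,T)$, I can fix $\ell_0$ large enough that $v \subset \max(\innt(t_{\ell_0 - 1}))$. Let $\{B_k\}_k$ be the finite blocks witnessing $(v,S) \geq (w,T)$, so that each $\innt(s_k) \subset \bigcup_{m \in B_k}\innt(t_m)$ with positivity preserved. Pick $k \geq \ell_{v,S}$ with $\min(B_k) \geq \ell_0$, and apply clause (4) of the extension relation to the $s_k$-positive set $\innt(s_k)$ to obtain $\ell \in B_k$ such that $e := \innt(s_k) \cap \innt(t_\ell)$ is $t_\ell$-positive. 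Now apply the mod finite meet hypothesis with the set $v$ (valid since $v \subset \max(\innt(t_{\ell-1}))$) and this positive $e$, obtaining $n < \min(\innt(t_{\ell+1}))$ and $w_e \subset e$ with $(v \cup w_e,\{t_m : m > \ell\}) \geq (u_n,T_n)$, taking the allowed choice $k=n$ in Definition \ref{direct}.

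Finally I set $v^{*} = v \cup w_e$ and $T^{*} = \{s_{k'} : k' > k\}$, and verify that $(v^{*},T^{*})$ dominates both $(v,S)$ and $(u_n,T_n)$. The relation $(v^{*},T^{*}) \geq (v,S)$ is immediate, since $w_e \subset \innt(s_k) \subset \innt(v,S)$ and $T^{*}$ is a sub-enumeration of $S$. For $(v^{*},T^{*}) \geq (u_n,T_n)$, the containments $u_n \subset v^{*}$ and $v^{*}\setminus u_n \subset \innt(u_n,T_n)$ come directly from the mod finite meet step; the required block decomposition is obtained by composing the two existing ones: for each $k' > k$ set $B_{k'}^{*} := \bigcup_{m \in B_{k'}} B_m^{T_n}$, where $\{B_m^{T_n}\}_{m > \ell}$ witnesses $(v \cup w_e,\{t_m : m > \ell\}) \geq (u_n,T_n)$. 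This makes sense because $m \in B_{k'}$ and $k' > k$ force $m > \max(B_k) \geq \ell$, and the compatibility of the orderings $\max(B_{k'}^{*}) < \min(B_{k'+1}^{*})$ follows from $\max(B_{k'}) < \min(B_{k'+1})$ together with the analogous property for the $B_m^{T_n}$.

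The step I expect to require the most care, though it should remain routine, is the verification of clause (4) for the composed decomposition: given an $s_{k'}$-positive $e' \subset \innt(s_{k'})$, the positivity clause of $(v,S) \geq (w,T)$ first yields $m \in B_{k'}$ with $e' \cap \innt(t_m)$ being $t_m$-positive, and then the positivity clause of $(v \cup w_e,\{t_m : m > \ell\}) \geq (u_n,T_n)$ yields $j \in B_m^{T_n}$ with $e' \cap \innt(t_j^{T_n})$ being $t_j^{T_n}$-positive. This chaining delivers $(v^{*},T^{*})$ as the desired common extension and completes the argument.
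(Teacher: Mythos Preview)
Your argument is correct and follows essentially the same route as the paper: pick a positive set inside some $t_\ell$ (you use $\innt(s_k)$ and then extract $e=\innt(s_k)\cap\innt(t_\ell)$), apply the mod finite meet clause to obtain $n$ and $w_e$, and conclude that $(v\cup w_e,\,\text{tail})$ is a common extension. The only difference is cosmetic: the paper invokes transitivity of $\geq$ in $\mathcal Q_{Bould}$ directly to pass from the $T$-tail extension to the $S$-tail extension, whereas you spell this transitivity out by explicitly composing the block decompositions $B^{*}_{k'}=\bigcup_{m\in B_{k'}}B^{T_n}_m$.
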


\begin{proof} 
  Let $(w,T')$ be an arbitrary member of $\mathcal Q_{Bould}$
  that is  compatible with $(w,T)= (w,\{t_\ell : \ell \in \omega\})
$ in $\mathcal Q_{Bould}$.
  By extending $(w,T')$, we may assume that $(w,T')
   <(w,\{ t_\ell : \ell>\ell_{w}\})$, where
 $w\subset \min(\mathop{int}(t_{\ell_w}))$.
  Choose any $T'$-positive $e$
   so that $(w\cup e, T') < (w,T')$ and $\max(w)  < \min(e)$.
   Therefore there is an $\ell >\ell_w$  
   such 
   that $h_\ell(e\cap \mathop{int}(t_\ell)) >0$,
and so, by Definition \ref{direct},
 there is an $n<\min(\mathop{int}(t_{\ell+1}))$ 
and a $w_e\subset e$
so that 
    $(w\cup w_e,T) < (u_n,T_n)$ and we have
   that $(w\cup w_e,T')<(w,T) < (u_n,T_n)$. 
\end{proof}

\begin{definition} 
 $\mathbb Q_{207}$ is  the set of $Q(\mathcal D)$ 
 where $\mathcal D$ is an $\aleph_1$-directed mod finite
 set of pure conditions. 
\end{definition}

\begin{proposition}
Assume% 
\label{q207}
   $\{ P_i : i<\kappa \}$ is a continuous $\cprec$-chain 
   of ccc posets  and that 
$\{  \dot{  Q}_{i} : i < \kappa\}$  is a
chain such that, for each $i<\kappa$, $\dot {Q}_i$
is a 
 $  P_{ i}$-name of   a member of $\mathbb Q_{207}$,
  then $\{ P_i * \dot Q_i : i<\kappa\}$ is a 
  continuous $\cprec$-chain of ccc posets. 
 \end{proposition}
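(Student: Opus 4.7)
The plan is to establish three properties of the sequence $\{P_i * \dot Q_i : i<\kappa\}$: each iterate is ccc, the sequence is $\cprec$-increasing, and continuity holds at every limit of uncountable cofinality. The ccc property is immediate from Proposition \ref{centered}, which forces each $\dot Q_i = Q(\mathcal D_i)$ to be $\sigma$-centered; ccc composed with forced ccc remains ccc, so $P_i * \dot Q_i$ is ccc for every $i<\kappa$.

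For the $\cprec$-step, fix $i<j<\kappa$ and invoke Lemma \ref{successorMatrix}: it suffices that $P_j$ forces $\dot Q_i <_{V[G_i]} \dot Q_j$, where $G_i$ denotes the generic for $P_i$. In the $P_j$-extension we have $\mathcal D_i \subseteq \mathcal D_j$, both $\aleph_1$-directed mod finite, and the inclusion $Q(\mathcal D_i) \subseteq Q(\mathcal D_j)$ with inherited order is transparent. The real content is to show that every maximal antichain $A = \{(u_n, T_n) : n \in \omega\} \subset Q(\mathcal D_i)$ lying in $V[G_i]$ remains maximal in $Q(\mathcal D_j)$. Here the $\aleph_1$-directed mod finite hypothesis on $\mathcal D_i$ is exactly what is needed: the predense set $A$ admits a mod finite meet $T_A \in \mathcal D_i \subseteq \mathcal D_j$.

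Given any $(u', T') \in Q(\mathcal D_j)$, use directedness of $\mathcal D_j$ to find $T'' \in \mathcal D_j$ extending both $T'$ and $T_A$ mod finite. Then $(u', T'')$ extends $(u', T')$ inside $Q(\mathcal D_j)$ and also extends $(u', T_A)$ as a condition of $\mathcal Q_{Bould}$. By Lemma \ref{whymeet}, $A$ is predense below $(u', T_A)$ in all of $\mathcal Q_{Bould}$, so $(u', T'')$ is compatible in $\mathcal Q_{Bould}$ with some $(u_n, T_n) \in A$; a further use of directedness of $\mathcal D_j$ promotes this common extension to one whose pure part lies in $\mathcal D_j$, witnessing compatibility inside $Q(\mathcal D_j)$. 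This paragraph is where the work lies; the main obstacle is the bookkeeping around the mod finite condition, but it is routine once Lemma \ref{whymeet} is in hand, because $\aleph_1$-directedness was designed precisely to supply the meets that make Lemma \ref{whymeet} applicable.

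Continuity at a limit $j<\kappa$ of uncountable cofinality follows from ccc reflection together with the chain hypothesis on $\{\dot Q_i\}$. A condition $(p, \dot q) \in P_j * \dot Q_j$ is determined by countably many pieces of data; by continuity of $\{P_i\}$ and the ccc, $p$ and the antichains deciding $\dot q$ all lie in some $P_{i_0}$ with $i_0 < j$. Moreover $\dot q$ names a pair $(u, T)$ with $T$ in the $P_j$-name for $\mathcal D_j$, which at the limit stage is the union of the earlier $\mathcal D_i$'s, so $T$ is in fact in some $\mathcal D_{i_1}$ with $i_1 < j$. Taking $i^* = \max(i_0, i_1)$ places $(p, \dot q)$ inside $P_{i^*} * \dot Q_{i^*}$, establishing $P_j * \dot Q_j = \bigcup_{i<j} P_i * \dot Q_i$.
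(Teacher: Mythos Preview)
Your argument for the $\cprec$-increasing part is correct and is precisely the paper's approach: reduce via Lemma~\ref{successorMatrix} to preservation of predense subsets, then invoke the mod finite meet provided by $\aleph_1$-directedness together with Lemma~\ref{whymeet}. You spell out the directedness bookkeeping that the paper leaves implicit (the paper simply observes that $[\mathbb N]^{<\aleph_0}\times\{T\}$ is predense in the larger poset for each pure $T$ in the smaller one, and then cites Lemma~\ref{whymeet}), but the skeleton is identical.

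Your continuity paragraph, however, assumes something not in the hypothesis: you write that at a limit stage $\mathcal D_j$ ``is the union of the earlier $\mathcal D_i$'s'', but the Proposition only says $\{\dot Q_i\}$ is a chain, with no limit condition. Without that assumption, $P_j * \dot Q_j$ need not equal $\bigcup_{i<j} P_i * \dot Q_i$: a condition whose second coordinate names a pure $T\in \mathcal D_j\setminus\bigcup_{i<j}\mathcal D_i$ does not lie in the union. In the intended application (Corollary~\ref{boulderstep}) the family $\mathcal D_j$ at a limit $j$ is in fact strictly larger than the union --- one first takes the union and then extends via Lemma~\ref{Qstep} using fresh Cohen coordinates indexed by $j$. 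The paper's own proof does not address continuity at all, so this is arguably an imprecision in the statement rather than a flaw in your reasoning; in practice the continuity needed for $\bz\in\APv$ is arranged directly in the construction rather than derived from this Proposition.
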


\begin{proof}
  By Lemma \ref{successorMatrix}, it suffices to prove
  that each   $  P_{j}$-name 
  of a predense subset of $\dot {  Q}_{ j}$
  is forced by $ P_{ i}$ to be predense
  in $\dot { Q}_{ i}$.  Since
   $\dot {Q}_{ j}$ is forced to be 
   a subset of $\dot { Q}_{ i}$, 
   it is immediate that $[\mathbb N]^{<\aleph_0}\times
    \{T\}$ is a predense subset of 
      $\dot { Q}_{ i}$ for each
       $(\emptyset,T)\in \dot{ Q}_{ j}$.
   Now the  Proposition follows by Lemma \ref{whymeet}.
\end{proof}

\begin{definition} Say that a subset $\tilde {\mathcal L}$ of
  $\mathcal L_1$ is\label{Dpositive}
 $\mathcal D$-positive if for each finite
  $\mathcal D'\subset \mathcal D$ and each $n\in \omega$,
  there is a $t\in \tilde {\mathcal L}\cap \mathcal L_n$ 
  that is built from $\mathcal D'$.
\end{definition}

\begin{proposition} If $\tilde{\mathcal L}\subset \mathcal L_1$ is
  $\mathcal D$-positive for some $\mathcal D\subset\Pure$, then for
  each\label{directpositive}
 finite ${\mathcal D}'\subset \mathcal D$,
  the set $\{ t\in \tilde {\mathcal L} : t \ \mbox{is built from}\
{  \mathcal D}'\}$ is $\mathcal D$-positive.
\end{proposition}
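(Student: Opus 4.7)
The plan is to exploit the obvious monotonicity of the relation ``built from'': if $t$ is built from a set $\mathcal{D}''\subset \Pure$ and $\mathcal{D}'\subset \mathcal{D}''$, then $t$ is also built from $\mathcal{D}'$, because the same witness $q=(\emptyset,T_q)$ satisfies $q\ge (\emptyset,T_{q'})$ for every $q'\in \mathcal{D}'\subset \mathcal{D}''$. With this in hand, the statement is essentially immediate from the definition of $\mathcal D$-positivity.

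Fix a finite $\mathcal{D}'\subset\mathcal{D}$ and set $S=\{t\in\tilde{\mathcal L}: t\text{ is built from }\mathcal{D}'\}$. To verify $S$ is $\mathcal{D}$-positive, take an arbitrary finite $\mathcal{D}''\subset\mathcal{D}$ and arbitrary $n\in\omega$; we must produce $t\in S\cap\mathcal{L}_n$ built from $\mathcal{D}''$.

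Form the finite set $\mathcal{D}'\cup\mathcal{D}''\subset\mathcal{D}$. By the assumed $\mathcal{D}$-positivity of $\tilde{\mathcal L}$ applied to this finite subfamily and to $n$, there is some $t\in\tilde{\mathcal L}\cap\mathcal{L}_n$ built from $\mathcal{D}'\cup\mathcal{D}''$. By the monotonicity observation, $t$ is built from $\mathcal{D}'$ (so $t\in S$) and also built from $\mathcal{D}''$, as required.

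There is no real obstacle in this argument: the only content is to notice that the definition of ``built from $\mathcal{D}$'' is preserved under subsets of $\mathcal{D}$ (using the same witness $q$), so positivity with respect to a family of the form $\{t\in\tilde{\mathcal L} : t\text{ is built from }\mathcal{D}'\}$ reduces to positivity of $\tilde{\mathcal L}$ against the single enlarged finite family $\mathcal{D}'\cup\mathcal{D}''$.
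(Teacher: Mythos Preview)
Your argument is correct and is exactly the intended one; the paper states this proposition without proof because the monotonicity of ``built from'' under shrinking the finite family makes it immediate, just as you explain.
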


The poset $\Fn(\mathbb N,2)$ is forcing isomorphic to the poset
$\omega^{<\omega}$ ordered by extension. Similarly,
each  infinite branching (non-empty) 
 subset $S\subset\omega^{<\omega}$ is forcing isomorphic to
 $\omega^{<\omega}$; we say that $S$ is infinite branching
 if,  for each $s\in S$, the 
$\{n \in\omega : s^{\frown} n\in  S\}$ is infinite.
For such infinite branching  $S\subset\omega^{<\omega}$ 
 and each $k\in \omega$,
let $\dot n_k^S$ denote the $S$-name
 $\{ (s(k) , s) : s\in S\ \mbox{and}\ k\in \dom(s)\}$. 

\begin{definition}
Fix an enumerating  function $\lambda$ from $\omega$ 
onto $ \mathcal L_1$.  For $\mathcal D\subset \Pure$, 
say that $S\subset \omega^{<\omega}$ is
   ${\mathcal D}^+$-branching if $\emptyset\in S$ and, for each $s\in
S$,\label{Dbranch} 
\begin{enumerate}
\item for each $k\in\dom(s)$, $\lambda(s(k))\in \mathcal L_{k}$,
\item  $\max(\innt(\lambda(s(j)))) < \min(\innt(\lambda(s(k))))$
  for $j<k\in\dom(s)$, and
\item   the set $\{ \lambda(n) : s^{\frown} n\in S\}$ is a
  $\mathcal D$-positive set~.
\end{enumerate}
For each $k\in \omega$, 
define the $S$-name $\dot r^S_k $ to be $\lambda(\dot n^S_k)$. For
each finite ${\mathcal D}'\subset \Pure$,
let $\dot I^S_{{\mathcal D}'}$ be the $S$-name for the set
 $\{ k\in \omega : \dot r^S_k \ \mbox{is built from \ } {\mathcal D}'\}$.
\end{definition}

\begin{lemma} If $\mathcal D\subset \Pure$ is finitely compatible
  and
 if $S\subset\omega^{<\omega}$ is ${\mathcal    D}^+$-branching
 then\label{makedirected} 
$ \Vdash_{S} \dot{\mathcal D}^S = \mathcal D \cup
  \{ (\emptyset , \{\dot r^S_k : k\in \dot I^S_{{\mathcal D}'}\} )
  : {\mathcal D}'\in [\mathcal D]^{<\aleph_0}\}$
  is finitely directed. 
\end{lemma}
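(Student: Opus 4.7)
The plan is to show that given any finite $\mathcal F\subset \dot{\mathcal D}^S$, a common upper bound can be exhibited explicitly inside $\dot{\mathcal D}^S$. Each element of $\mathcal F$ is either a member of $\mathcal D$ or has the form $(\emptyset,\{\dot r^S_k : k\in \dot I^S_{\mathcal D'_j}\})$ for some finite $\mathcal D'_j\subset \mathcal D$. Setting $\mathcal F_0=\mathcal F\cap\mathcal D$ and $\mathcal D''=\mathcal F_0\cup\bigcup_j \mathcal D'_j\in [\mathcal D]^{<\aleph_0}$, the candidate upper bound will be
\[
q^{*}=\bigl(\emptyset,\{\dot r^S_k : k\in\dot I^S_{\mathcal D''}\}\bigr),
\]
which lies in $\dot{\mathcal D}^S$ by construction.

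First I will verify that $q^{*}$ is forced to be a genuine pure condition. Clauses (1) and (2) of Definition \ref{Dbranch} put $\lambda(s(k))$ in $\mathcal L_k$ and give strict increase of the $\innt(\lambda(s(k)))$ in $k$; passing to indices in $\dot I^S_{\mathcal D''}$ preserves both, yielding the monotone unboundedness of the logarithmic measures together with the required separation of interiors. The remaining issue is that $\dot I^S_{\mathcal D''}$ is forced to be infinite: by clause (3) of Definition \ref{Dbranch} the set of successor-labels at each $s\in S$ is $\mathcal D$-positive, and by Proposition \ref{directpositive} its subcollection of labels built from $\mathcal D''$ is still $\mathcal D$-positive, hence meets every $\mathcal L_n$. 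This gives the density in $S$ needed to force infinitude.

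It remains to check $q^{*}\ge q$ for every $q\in\mathcal F$. For a \emph{new} $q=(\emptyset,\{\dot r^S_k:k\in\dot I^S_{\mathcal D'_j}\})$, being built from $\mathcal D''\supset \mathcal D'_j$ is a strictly stronger requirement, so $\dot I^S_{\mathcal D''}\subset \dot I^S_{\mathcal D'_j}$ and the tree of $q^{*}$ is literally a subsequence of that of $q$; singleton blocks $B_k$ witness that a subsequence of a pure condition strengthens it in $\mathcal Q_{Bould}$. For an \emph{old} $q=(\emptyset,T_q)\in\mathcal F_0\subset\mathcal D''$ and any $k\in\dot I^S_{\mathcal D''}$, the defining witness that $\dot r^S_k$ is built from $\mathcal D''$ produces a pure $(\emptyset,T)\ge(\emptyset,T_q)$ with $\dot r^S_k\in T$; the block $B_k\subset\omega$ that this witness uses to cover $\innt(\dot r^S_k)$ by a union of $\innt(t^q_\ell)$'s then inherits clauses (2)--(4) of the extension relation. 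Disjointness of successive interiors $\innt(\dot r^S_k)$ forces $\max(B_k)<\min(B_{k+1})$, completing the verification.

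The main obstacle I foresee is the faithful transfer of the positivity clause (4): given a $\dot r^S_k$-positive $e\subset\innt(\dot r^S_k)$, one must locate a $j\in B_k$ for which $e\cap\innt(t^q_j)$ is $t^q_j$-positive. This is inherited directly from the positivity clause in the witnessing extension $(\emptyset,T)\ge(\emptyset,T_q)$ applied to the single coordinate $\dot r^S_k$. All other steps are essentially bookkeeping once Proposition \ref{directpositive} and Definition \ref{Dbranch} are in hand.
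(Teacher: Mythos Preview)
The paper states this lemma without proof, so there is no argument to compare against; your approach---taking $q^*=(\emptyset,\{\dot r^S_k:k\in\dot I^S_{\mathcal D''}\})$ with $\mathcal D''=\mathcal F_0\cup\bigcup_j\mathcal D'_j$---is the natural one and is almost certainly what the authors intend.

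Your verifications that $q^*$ is a genuine pure condition and that $q^*\ge q^*(\mathcal D'_j)$ for the ``new'' members of $\mathcal F$ are correct. The gap is in the last step, where you argue $q^*\ge q$ for $q\in\mathcal F_0$. You assert that ``disjointness of successive interiors $\innt(\dot r^S_k)$ forces $\max(B_k)<\min(B_{k+1})$'', but this is not quite right: from $\max(\innt(\dot r^S_k))<\min(\innt(\dot r^S_{k'}))$ one only gets $\max(C_k)\le\min(C_{k'})$ for the canonical blocks $C_k=\{\ell:\innt(t^q_\ell)\cap\innt(\dot r^S_k)\ne\emptyset\}$, not the strict inequality. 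Nothing in the definitions of ``built from'' or of $\mathcal D^+$-branching prevents a single $\innt(t^q_\ell)$ from meeting both $\innt(\dot r^S_k)$ and $\innt(\dot r^S_{k'})$ for consecutive $k<k'$ in $\dot I^S_{\mathcal D''}$; when this happens the shared index $\ell$ cannot be dropped from either block without risking a failure of clause~(4) of the extension relation. The witness $(\emptyset,T)\ge q$ that certifies $\dot r^S_k$ is built from $\mathcal D''$ is chosen separately for each $k$, so you get no coordination between the block for $\dot r^S_k$ and the block for $\dot r^S_{k'}$.

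This is a genuine technical point that your write-up glosses over. Whether it is repairable within the statement as written (for arbitrary $\mathcal D^+$-branching $S$), or whether one needs either a mod-finite reading of ``directed'' or an additional separation clause on $S$ (as is imposed, for instance, in the construction of $S$ in the proof of Lemma~\ref{fuse}), deserves explicit attention.
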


\begin{lemma}
  If $\mathcal D\subset \Pure$ is finitely directed,\label{ultrasplit}  
  then there is a $\Fn(\mathbb N,2)$-name $\dot {\mathcal D}_1$ such
  that \begin{enumerate}
\item $\Vdash_{\Fn(\omega,2)}  \mathcal D\subset
  \dot{\mathcal D}_1\subset \Pure\ \ \mbox{and}\ \dot{\mathcal D}_1$
 is finitely directed,
\item for each $A\subset \mathbb N$,
  $\Vdash_{\Fn(\mathbb N,2)} ~(\exists q\in \dot{\mathcal D}_1)~
  (   \innt(q)\subset A \ \ \mbox{or}\ \ \innt(q)\cap A=\emptyset)$
  \end{enumerate}
\end{lemma}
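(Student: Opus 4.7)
The plan is to take $\dot{\mathcal{D}}_1 = \dot{\mathcal{D}}^S \cup \{q_{F, \mathcal{D}'} : F \in [\mathcal{P}(\mathbb{N})^V]^{<\omega}, \mathcal{D}' \in [\mathcal{D}]^{<\omega}\}$, where $\dot{\mathcal{D}}^S$ comes from Lemma \ref{makedirected} for an appropriate $\mathcal{D}^+$-branching tree $S$, and each $q_{F, \mathcal{D}'}$ is a pure condition refining the canonical element of $\dot{\mathcal{D}}^S$ associated with $\mathcal{D}'$, with interior deciding every $A \in F$. In $V$, construct an infinite-branching, $\mathcal{D}^+$-branching tree $S \subseteq \omega^{<\omega}$ (possible since $\mathcal{D}$ is finitely directed): at each $s \in S$ of depth $k$, include $\mathcal{D}$-positively many successors (through $\lambda$) in $\mathcal{L}_k$ with interiors above all previous $\innt(\lambda(s(j)))$. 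Since $S$ is infinite-branching, it is forcing-isomorphic to $\Fn(\mathbb{N}, 2)$, and Lemma \ref{makedirected} supplies the finitely directed $\dot{\mathcal{D}}^S \supseteq \mathcal{D}$.

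The key log-measure observation is: for $t \in \mathcal{L}_{k+1}$ and $A \subseteq \mathbb{N}$, applying the axiom to $\innt(t) = (\innt(t) \cap A) \cup (\innt(t) \setminus A)$ yields $\epsilon \in \{0, 1\}$ with $(\innt(t) \cap A^\epsilon, h_t) \in \mathcal{L}_k$, and this refinement preserves being built from $\mathcal{D}'$. Fix in $V$ an enumeration $\mathcal{P}(\mathbb{N})^V = \{A_\xi\}$. In $V[G]$, I inductively define $\epsilon^* \colon \mathcal{P}(\mathbb{N})^V \to \{0, 1\}$ together with, for each pair $(F, \mathcal{D}')$, a pure condition $q_{F, \mathcal{D}'}$ with index set $I_{F, \mathcal{D}'} \subseteq \dot I^S_{\mathcal{D}'}$, maintaining that (i) $\innt(q_{F, \mathcal{D}'}) \subseteq \bigcap_{A \in F} A^{\epsilon^*(A)}$, and (ii) $F \subseteq F'$ and $\mathcal{D}' \subseteq \mathcal{D}''$ imply $I_{F', \mathcal{D}''} \subseteq I_{F, \mathcal{D}'}$. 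At stage $\xi$, for each finite pair $(F_0, \mathcal{D}')$, the set $W_{F_0, \mathcal{D}'} \subseteq \{0, 1\}$ of sides $\epsilon$ for which $q_{F_0, \mathcal{D}'}$ refines by $A_\xi^\epsilon$ is non-empty by log measure and pigeonhole, and the family $\{W_{F_0, \mathcal{D}'}\}$ is directed under reverse inclusion. Since $\{0, 1\}$ is finite, $\bigcap W_{F_0, \mathcal{D}'}$ is non-empty; pick $\epsilon^*(A_\xi)$ there and define the new index sets accordingly. Property (2) is then immediate via $q_{\{A\}, \emptyset}$; finite directedness follows because $q_{F_1 \cup \cdots \cup F_n, \mathcal{D}'_1 \cup \cdots \cup \mathcal{D}'_n}$ is an upper bound of $q_{F_1, \mathcal{D}'_1}, \ldots, q_{F_n, \mathcal{D}'_n}$ by the nesting invariant.

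The main obstacle is verifying clauses (3)--(4) of the $\mathcal{Q}_{Bould}$ extension relation for $q' \geq q$ (with $q = q_{F, \mathcal{D}'}$ and $q' = q_{F', \mathcal{D}''}$, $F \subseteq F'$, $\mathcal{D}' \subseteq \mathcal{D}''$) beyond interior containment. These follow from the nesting $I_{q'} \subseteq I_q$: take singleton $B_k = \{k'\}$ where $k' \in I_q$ corresponds to $k \in I_{q'}$; then $\innt(t^{q'}_k) \subseteq \innt(t^q_{k'})$ by construction, and $t^{q'}_k$-positivity of any $e$ implies $t^q_{k'}$-positivity since $h_{t^q_{k'}}$ restricts to $h_{t^{q'}_k}$ on subsets of the refined interior.
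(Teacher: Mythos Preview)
Your approach diverges substantially from the paper's, and it carries a real gap. The paper never refines the interiors of individual pieces $\dot r^S_k$. Instead it observes that the property ``$\mathcal L(A)=\{t\in\mathcal L_1:\innt(t)\subset A\}$ is $\mathcal D$-positive'' is stable under finite partitions (some cell always has it, by the logarithmic-measure axiom), and then applies Zorn's Lemma \emph{in $V$} to get a free ultrafilter $\mathcal U$ with $\mathcal L(U)$ $\mathcal D$-positive for every $U\in\mathcal U$. After that one simply takes the maximal $\mathcal D^+$-branching $S$ and adds to $\dot{\mathcal D}^S$ the conditions $(\emptyset,\{\dot r^S_k:k\in\dot I^S_U\cap\dot I^S_{\mathcal D'}\})$ for $U\in\mathcal U$, $\mathcal D'\in[\mathcal D]^{<\aleph_0}$. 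No pieces are shrunk, so directedness is immediate from the filter structure of $\mathcal U$ and of $[\mathcal D]^{<\aleph_0}$, and clause~(2) holds because each $A$ lies on one side of $\mathcal U$.

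Your construction tries to manufacture the analogue of $\mathcal U$ inside $V[G]$ by transfinitely choosing $\epsilon^*(A_\xi)$ while maintaining the nesting invariant~(ii) on the index sets $I_{F,\mathcal D'}$. The difficulty is that your pieces $t^{(F,\mathcal D')}_k$ are obtained by \emph{intersecting} $\innt(\dot r^S_k)$ with $\bigcap_{A\in F}A^{\epsilon^*(A)}$ and restricting $h_{\dot r^S_k}$. For the natural choice $I_{F\cup\{A_\xi\},\mathcal D'}=\{k\in I_{F,\mathcal D'}:h_{\dot r^S_k}(\innt^{(F)}_k\cap A_\xi^{\epsilon^*})>0\}$, the inclusion $I_{F'\cup\{A_\xi\},\mathcal D''}\subseteq I_{F\cup\{A_\xi\},\mathcal D'}$ (for $F\subseteq F'$, $\mathcal D'\subseteq\mathcal D''$) requires that $h_{\dot r^S_k}$ be monotone: from $h(\innt^{(F')}_k\cap A_\xi^{\epsilon^*})>0$ and $\innt^{(F')}_k\subseteq\innt^{(F)}_k$ you want $h(\innt^{(F)}_k\cap A_\xi^{\epsilon^*})>0$. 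The paper's definition of logarithmic measure does not include monotonicity, and your phrase ``define the new index sets accordingly'' does not explain how to secure~(ii) without it. (Your directedness argument for the $W_{F_0,\mathcal D'}$ is fine existentially---any refinement of $q_{F_1,\mathcal D''}$ is already a refinement of $q_{F_0,\mathcal D'}$---but that does not produce coherent index sets.) The paper's ultrafilter-in-$V$ argument sidesteps this entirely by never shrinking $\innt(\dot r^S_k)$; you should either adopt that route or explicitly justify why the logarithmic measures in play may be taken monotone.
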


\begin{proof}
Let $S\subset \omega^{<\omega}$ be the maximal ${\mathcal
  D}^+$-branching set.  That is, $S$ is the set of all 
$s\in \omega^{<\omega}$ that satisfy  properties (1) and (2) of
Definition \ref{Dbranch}. For finite subsets ${\mathcal D}'$ of
$\mathcal D$, let $\dot I^S_{{\mathcal D}'}$ be an $S$-name for
the set $\{ k \in \omega : \dot r^S_k
\ \mbox{is built from}\  {\mathcal D}'\}$.

 For a subset
  $A$ of $\mathbb N$, define
   $\mathcal L(A)$ to be 
  $\{ t\in \mathcal L_1 : \innt(t)\subset  A\}$.
  If $\mathcal L(A)$ is not $\mathcal D$-positive, there is a finite
  ${\mathcal D}_A\subset \mathcal D$ and an integer $m$ 
such that there is no
$t\in \mathcal L_{m}\cap \mathcal L(A)$
 that is built from  ${\mathcal D}_A$.
Since, for each $t\in \mathcal L_{n+1}$, there is an $e\subset
\innt(t)$
such that $h_t(e) \geq n$  and either $e\subset A$
or $e\subset (\mathbb N\setminus A)$, it follows
that  if $\mathcal A$ is  a finite
  partition of $\mathbb N$, then $\mathcal L(A)$ is $\mathcal
  D$-positive for some $A\in \mathcal A$.  Therefore, by Zorn's Lemma,
  there is  a free ultrafilter $\mathcal U$ on $\mathbb N$
  so that $\mathcal L(U)$ is $\mathcal D$-positive for all $U\in
  \mathcal U$.   For each $U\in \mathcal U$, let 
  $\dot I^S_{U}$ denote an
$S$-name that will evaluate to
  $\{ k\in \omega : \innt(\dot r^S_k) \subset U\}$. 
It
  follows  from the fact
  that  $\mathcal L(U) $ is $\mathcal D$-positive, that
  $\Vdash_S \dot I^S_U \cap \dot I^S_{{\mathcal D}'}$ is infinite
for each ${\mathcal D}'\in[\mathcal D]^{<\aleph_0}$.
 It is also clear
  that $\Vdash_S \{ \dot I^S_U \cap
\dot I^S_{{\mathcal D}'}: U\in \mathcal U\}$ is closed under
  finite intersections. It then follows that
$ \Vdash_{S} \dot{\mathcal D}^S = \mathcal D \cup
  \{ (\emptyset , \{\dot r^S_k : k\in \dot I^S_{{\mathcal D}'}\} )
  : {\mathcal D}'\in [\mathcal D]^{<\aleph_0}\}$
is the desired finitely directed subset of $\Pure$. 
\end{proof}

In order to produce extensions of finitely directed
$\mathcal D\subset \Pure$ that are $\aleph_1$-directed mod
finite, we will need the following tools for   constructing members
of $\mathcal L_n$ for arbitrarily large $n$.
A family $L\subset [\mathbb N]^{<\aleph_0}$  naturally induces a
logarithmic measure. 

\begin{definition}
Let $L\subset [\mathbb N]^{<\aleph_0}$ and define\label{Pmeas}
 the relation $h(s) \geq \ell$ for $s\in [\mathbb N]^{<\aleph_0}$
 by induction
 on $|s|$ and $\ell$ as follows:
\begin{enumerate}
\item $h(e) \geq 0$ for all $e\in [\mathbb N ]^{<\aleph_0}$,
\item $h(e) > 0$ if $e$ contains some non-empty element  of $L$,
\item for $\ell >0$, $h(e)\geq \ell+1$ if and only if,
 $|e|>1$ and
  whenever $e_1,e_2\subset e$ are such that
$e = e_1\cup e_2$ then $h(e_1)\geq \ell$ or $h(e_2)\geq \ell$. 
\end{enumerate}
The definition of $h(e)$ is the maximum $\ell$ such 
that $h(e)\geq \ell$. 
\end{definition}

\begin{proposition}[\cite{Abraham}*{Lemma 4.7}]
Let $L\subset [\mathbb N]^{<\aleph_0}$ be\label{afterPmeas}
 an upward closed family of
non-empty sets and let $h$ be the associated logarithmic measure.
 Assume that whenever 
$\mathbb N$ is partitioned into finitely many sets $\mathcal A$,
 there is some $A\in \mathcal A$ such that $L\cap [A]^{<\aleph_0}$ is
 non-empty. Then, for any partition $\mathcal A$ of $\mathbb N$,
 and any integer $n$, there is an $A\in \mathcal A$ and an
 $e\subset A$ such that $h(e)\geq n$.
\end{proposition}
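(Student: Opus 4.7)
The plan is to prove by induction on $n \geq 1$ that the family $\mathcal{M}_n := \{e \in [\mathbb{N}]^{<\aleph_0} : h(e) \geq n\}$ is \emph{partition-regular}, meaning every finite partition of $\mathbb{N}$ has some part containing a member of $\mathcal{M}_n$. The conclusion of the proposition is exactly this. To keep the induction honest, I would actually establish the relativised form ``$\mathcal{M}_n$ is partition-regular in every $B \subset \mathbb{N}$ in which $L$ is already partition-regular''. The base case $n = 1$ is immediate, since $h(e) \geq 1$ iff $e$ contains a non-empty element of $L$, so $\mathcal{M}_1$ coincides with the upward closure of $L \setminus \{\emptyset\}$.

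The inductive step rests on two lemmas. First, an \emph{inheritance lemma}: if $\mathcal{F}$ is partition-regular in $B$ and $B = B_1 \cup \cdots \cup B_k$, then some $B_i$ has $\mathcal{F} \cap [B_i]^{<\aleph_0}$ still partition-regular in $B_i$. This is a routine diagonal argument: if each $B_i$ admitted a bad finite sub-partition, combining them over $i$ would contradict partition-regularity of $\mathcal{F}$ in $B$. The second and main lemma is a \emph{growth lemma}: if $\mathcal{M}_n$ is partition-regular in an infinite $B$, then some finite $e \subset B$ has $h(e) \geq n+1$.

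To prove the growth lemma I would first verify that $h$ is monotone in its set argument, by a short induction on the rank $\ell$ in Definition~\ref{Pmeas}. Assuming for contradiction that $h(e) \leq n$ for every finite $e \subset B$, the recursion forces every $e \subset B$ with $|e| \geq 2$ to admit a \emph{bad} bipartition $e = e_1 \cup e_2$ with $h(e_1), h(e_2) < n$; monotonicity guarantees that restrictions of such bad bipartitions remain bad. Enumerating $B = \{b_0, b_1, \dots\}$ and letting $B_k = \{b_0, \dots, b_{k-1}\}$, the bad bipartitions of the $B_k$ form a finitely branching tree, so K\"onig's lemma yields a $2$-colouring $c : B \to \{1,2\}$ each of whose initial segments is bad. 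Applying partition-regularity of $\mathcal{M}_n$ in $B$ to the partition $B = c^{-1}(1) \cup c^{-1}(2)$ produces a finite $e \subset c^{-1}(i)$ with $h(e) \geq n$; for $k$ large enough that $e \subset B_k$, $e$ lies inside one piece of the bad bipartition $c\restriction B_k$, and monotonicity forces that piece to have $h$-value $\geq n$, contradicting its badness.

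Putting the pieces together: given a partition of the ambient $B$, inheritance locates a part $A$ in which $L$ (and hence, by inductive hypothesis, $\mathcal{M}_n$) is partition-regular, and the growth lemma promotes this to an element of $\mathcal{M}_{n+1}$ inside $A$; iterating from $n=1$ yields the proposition. The main obstacle I anticipate is the compactness step in the growth lemma, specifically arranging the tree so that restrictions of bad bipartitions remain bad, which is precisely where monotonicity of $h$ does the real work.
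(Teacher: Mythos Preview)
The paper does not supply a proof; the proposition is quoted from \cite{Abraham} without argument, so there is no in-paper proof to compare against. Your inductive strategy (an inheritance lemma plus a K\"onig-style growth lemma, run on the relativised statement) is the standard route, and your growth lemma is correct --- in fact for finite $B$ with $|B|\geq 2$ no compactness is needed: if $h(B)\leq n$ then $B$ itself has a bad bipartition $B=B_1\cup B_2$, and partition-regularity of $\mathcal M_n$ in $B$ applied to that very bipartition gives some $e\subset B_i$ with $h(e)\geq n$, whence $h(B_i)\geq n$ by monotonicity, contradicting badness.

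There is, however, a genuine gap at the step $n=1\to n=2$: the inheritance lemma may hand you a singleton $A=\{a\}$ (this happens precisely when $\{a\}\in L$), and then no $e\subset A$ can satisfy $h(e)\geq 2$ since Definition~\ref{Pmeas}(3) requires $|e|>1$. This is not merely a gap in your argument but a defect in the proposition as literally stated. Take $L=\{e\in[\mathbb N]^{<\aleph_0}:1\in e\}$: this family is upward closed, consists of non-empty sets, and satisfies the partition hypothesis (the part containing $1$ always contains $\{1\}\in L$), yet the bipartition $e=\{1\}\cup(e\setminus\{1\})$ witnesses $h(e)\leq 2$ for every finite $e$, so the conclusion fails already for $n=3$. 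The version in \cite{Abraham} presumably carries a slightly stronger hypothesis ruling out such $L$ (for instance, that $L$ contain no singletons, or equivalently that $L$ remain partition-regular after deleting any single point); under any such hypothesis the inherited $A$ always satisfies $|A|\geq 2$, the growth lemma applies, and your induction goes through unchanged. In the paper's one application (Claim~\ref{large} in the proof of Lemma~\ref{fuse}) the relevant family $L_{q,\ell}$ visibly has this stronger property.
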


\begin{lemma}
If $\mathcal D \subset \Pure$
 is  finitely directed and  $E = \{ (u_n, T_n) : n\in \omega\}$
is a  subset of $Q(\mathcal D)$\label{fuse},
then in the forcing extension by $\Fn(\mathbb N,2)$,
 there is a finitely directed 
 $\mathcal D\subset \mathcal D_1 \subset \Pure $ 
 such that either $E$ is not predense in $Q(\mathcal D_1)$
 or there is a
 condition $(\emptyset, T)   \in \mathcal D_1$ such that 
$(\emptyset,T)$  is the mod finite meet of $E$.
\end{lemma}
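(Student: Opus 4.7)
The plan is to work in $V[G]$ with $G$ an $\Fn(\mathbb N, 2)$-generic, and attempt a recursive construction of $T = \{t_\ell : \ell < \omega\}$ so that $(\emptyset, T)$ will be a mod finite meet of $E$. If the recursion succeeds, I take $\mathcal D_1$ to be the finitely directed closure of $\mathcal D \cup \{(\emptyset, T)\}$ in $V[G]$; if it fails at some stage $\ell$, the failure will be converted (via Lemma \ref{ultrasplit}) into a finitely directed $\mathcal D_1$ witnessing that $E$ is not predense in $Q(\mathcal D_1)$.

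Fix an enumeration $\{q_i : i < \omega\}$ of $\mathcal D$; since each $(\emptyset, T_n)$ lies in $\mathcal D$, assume it is included. At stage $\ell$, having chosen $t_0, \ldots, t_{\ell-1}$ with $M_\ell = \max(\innt(t_{\ell-1}))$, consider the family $L_\ell \subset [\mathbb N \setminus M_\ell]^{<\aleph_0}$ of finite sets $e$ such that for every $w \subset M_\ell$ there exist $n$, bounded by a fixed function of $\min(e)$, and $w_e \subset e$ satisfying $u_n \subset w \cup w_e$, $(w \cup w_e) \setminus u_n \subset \innt(u_n, T_n)$, and such that any tail $\{t_m : m > \ell\}$ built from a sufficiently expanding enumeration of $\mathcal D$ will yield $(w \cup w_e, \{t_m : m > \ell\}) \geq (u_n, T_k)$ for each $k \leq \max\{n, \ell, M_\ell\}$. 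If $L_\ell$ is $\mathcal D$-positive in the sense of Definition \ref{Dpositive}, the logarithmic measure $h$ induced by the upward closure of $L_\ell$ (Definition \ref{Pmeas}), together with Proposition \ref{afterPmeas}, furnishes a finite $s \subset \mathbb N \setminus M_\ell$ built from $\{q_i : i \leq \ell\}$ with $h(s) \geq \ell$, and I set $t_\ell = (s, h \restriction [s]^{<\aleph_0})$. By construction the positive subsets of $\innt(t_\ell)$ lie in $L_\ell$, which is exactly what is required to realize the mod finite meet clause at level $\ell$.

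If instead $\mathcal D$-positivity of $L_\ell$ fails at some $\ell$, then by Definition \ref{Dpositive} there is a finite $\mathcal D' \subset \mathcal D$ and some $w \subset M_\ell$ such that no $\mathcal D'$-built $t \in \mathcal L_m$ (for some fixed $m$) has $\innt(t) \in L_\ell$. Applying Lemma \ref{ultrasplit} in $V[G]$, I obtain a finitely directed $\mathcal D_1 \supset \mathcal D \cup \mathcal D'$ containing a pure condition $(\emptyset, T^*)$ built from $\mathcal D'$ whose interior avoids the set of ``good'' positions (those $j \in \mathbb N \setminus M_\ell$ that could participate in a witness $w_e$ for an admissible $n$ given $w$). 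Then $(w, T^*) \in Q(\mathcal D_1)$ is incompatible with every $(u_n, T_n) \in E$, so $E$ is not predense in $Q(\mathcal D_1)$, yielding the first alternative.

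The principal obstacle is this translation step: verifying that the combinatorial failure of $\mathcal D$-positivity for $L_\ell$ does concretely yield, via the Cohen-generic ultrafilter-like selection of Lemma \ref{ultrasplit}, a pure condition $T^*$ whose interior is disjoint from the ``good'' set, so that no $(u_n, T_n)$ from $E$ has a common extension with $(w, T^*)$. This careful bookkeeping (in particular, keeping the bound on $n$ controlled by $\min(e)$ and ensuring that the tail-refinement requirement at each $T_k$ can be met uniformly over $k \leq \max\{n,\ell,M_\ell\}$) is where $\Fn(\mathbb N, 2)$ is used essentially; the remainder of the construction is a standard fusion argument in the framework of logarithmic measures.
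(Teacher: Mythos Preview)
Your overall strategy—build $T$ stage by stage, and if a stage fails convert the failure into a $\mathcal D_1$ witnessing non-predensity—is the contrapositive of the paper's argument, and in principle could work. But the failure case as you describe it does not go through, and this is where the real content of the lemma lies.

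The problem is the step where you say that failure of $\mathcal D$-positivity of $L_\ell$ yields a finite $\mathcal D'$ and \emph{a specific} $w\subset M_\ell$, and then that Lemma~\ref{ultrasplit} produces a pure $T^*$ whose interior avoids a set of ``good positions,'' making $(w,T^*)$ incompatible with every $(u_n,T_n)$. None of these moves is justified. Your $L_\ell$ already quantifies universally over $w$, so its failure gives only $\mathcal D'$ and $m$; the bad $w$ varies with the candidate $t$. Lemma~\ref{ultrasplit} only lets you put $\innt(T^*)$ inside or outside a \emph{fixed} $A\subset\mathbb N$, not avoid a family-dependent notion of ``good position.'' And even if $\innt(T^*)$ missed $\bigcup_n(u_n\setminus w)$, you would still have to rule out those $n$ with $u_n\subset w$, for which $(w,T^*)$ and $(u_n,T_n)$ remain compatible (the pure parts lie in the directed family $\mathcal D_1$). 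What actually makes the argument work is a partition-property computation: for each $q\in\mathcal D$ one forms the family $L_{q,\ell}=\bigcap_{w\subset\{1,\dots,\ell\}}L_w\cap\{\text{$q$-positive sets}\}$, and shows that for every finite partition $\mathcal A$ of $\mathbb N$ some piece $A$ meets $L_{q,\ell}$. This is exactly where predensity is used: apply Lemma~\ref{ultrasplit} to get $q_1\geq q$ in some $\mathcal D_1$ with $\innt(q_1)\subset A$, then predensity of $E$ in $Q(\mathcal D_1)$ gives for each $w$ a common extension of $(w,T_{q_1})$ with some $(u_{n_w},T_{n_w})$, and the union of these first coordinates is an element of $L_{q,\ell}$ inside $A$. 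Proposition~\ref{afterPmeas} then yields arbitrarily high values of the induced measure, hence the $(E,\ell)$-large elements are $\mathcal D$-positive. Your invocation of Proposition~\ref{afterPmeas} skips this: its hypothesis is the partition property, not $\mathcal D$-positivity, and you never verify it.

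There is also a circularity in your definition of $L_\ell$: membership of $e$ is phrased in terms of what ``any tail $\{t_m:m>\ell\}$'' would do, but those $t_m$ have not been constructed. The paper resolves this by separating the requirements: at stage $\ell$ one asks that $t_\ell$ be $(E,\ell_s)$-large (which fixes a bound $N_{t_\ell}$ on the witnessing $n$'s, independent of the future), and \emph{additionally} that $t_\ell$ be built from $\{(\emptyset,T_n):n<\max_k N_{t_k}\}$ so that later pieces automatically refine all the $T_n$'s needed so far. Encoding both demands as branching conditions on a tree $S\subset\omega^{<\omega}$ and forcing with $S\cong\Fn(\mathbb N,2)$ is what produces the generic meet.
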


\begin{proof}
  We assume that in the forcing extension by $\Fn(\mathbb N,2)$, 
  $E$ is a predense subset of $Q(\mathcal D_1)$ for each
  finitely directed $\mathcal D_1$ with $\mathcal D\subset
  \mathcal D_1\subset \Pure$. We will prove there is a
  ${\mathcal D}^+$-branching $S\subset \omega^{<\omega}$ satisfying
  that $\Vdash_{S} (\emptyset, \{\dot r^S_k : k\in \omega\})$ is
  a mod finite meet of $E$. By  Lemma \ref{makedirected},
  $S$ forces that there is a $\mathcal D_1$ as required. Since $S$ will
  be   forcing isomorphic to $\Fn(\mathbb N,2)$, this will complete the
  proof.

Let 
$L
 = \{ w\in [\mathbb N]^{<\aleph_0} : 
(\exists n\in \omega) ~~
u_n \subset w  \}~$, and for
 each $w\in [\mathbb N]^{<\aleph_0}$,
 let $$L_w =
 \{ w_1 \in [\mathbb N]^{<\aleph_0} :
\max(w)<\min(w_1)\ \mbox{and}\ 
 w\cup w_1 \in L \}.$$
Say that $t\in \mathcal L_1$ is $(E,\ell)$-large (for $\ell\in
\mathbb N$) if $\ell<\min(\innt(t))$ and
for each $w\subset\{1,\ldots, \ell\}$ each $t$-positive 
set contains an element of $L_w$.  For each $(E,\ell)$-large $t$,
let $N_t\geq \max(\innt(t))$
 denote a sufficiently large integer such that each
$t$-positive set contains an element of $\{ u_n : n < N_t\}$. 

\begin{claim}
  For\label{large}
 each $\ell\in\mathbb N$, the
  set $\{ t\in \mathcal L_\ell : t \ \mbox{is}\
   (E,\ell)\mbox{-large}\} $ is $\mathcal D$-positive.
\end{claim}

\bgroup
\def\proofname{Proof of Claim \ref{large}.}

\begin{proof}
  Let  $\ell\in \mathbb N$.
  Since $\mathcal D$
  is finitely directed, in order to show
  that the set of $(E,\ell)$-large
  elements is $\mathcal D$-positive, it suffices
  to show that for each  $q\in \mathcal D$
  there is a $(E,\ell)$-large $t$ that is built from
  $q$. 
Note $t\in \mathcal L_1$ is built from
  $q$ if $\innt(t)\subset \innt(q)$ and,
  for each $t$-positive set $e$, there is a 
$t_e\in T_q$ 
  such that $e\cap \innt(t_e)$ is $t_e$-positive.
  Say that a finite set $e$ is $q$-positive if
  there is a $t_e\in T_q$ such
  that $e\cap \innt(t_e)$ is $t_e$-positive.
  Note also that if $q_1,q\in \Pure$ and $q_1\geq q$,
then each $q_1$-positive set is also $q$-positive.

Define  $L_{q,\ell}$ to be the elements of 
$\bigcap_{ w\subset \{1,\ldots, \ell\}}   L_w $
that are also $q$-positive. Let $h_{q,\ell}$ denote the associated
logarithmic measure as in Definition \ref{Pmeas}. If $e\in
L_{q,\ell}$ is a subset of $\innt(q)$, then
 $(e, h_{q,\ell}\restriction [e]^{<\aleph_0})$ is built from
$q$. Therefore,
to  finish the proof
of the claim it will suffice to prove that there is an $e\in
L_{q,\ell}$ with $h_{q,\ell}(e) > \ell$. 
We prove this using Proposition
\ref{afterPmeas}; so let $\mathcal A$ be a finite partition of
$\mathbb N$. Pass to the forcing extension by $\Fn(\mathbb N,2)$
and choose, by Corollary \ref{ultrasplit}, a finitely directed
$\mathcal D_1\subset \Pure$ that contains $\mathcal D$
and satisfies that there is a $q_1\geq q$ in $Q(\mathcal D_1)$
and an $A\in \mathcal A$
such that $\innt(q_1)\subset A$. We may arrange  that
$\ell < \min(\innt(q_1))$. 
By assumption, $E$ is a predense
subset of $Q(\mathcal D_1)$. Let $ \{ t_k : k\in\omega\}$ be
the standard enumeration of $T_{q_1}$. 
For each $w\subset \{1,\ldots,\ell\}$,
there is a $q_w \geq (w,T_{q_1})$ and an $n_w\in\omega$ 
such that $q_w \geq (u_{n_w}, T_{n_w})$.
There is a $K_w\in  \omega$ such that
$(u_{q_w}\setminus w) \subset \bigcup\{ \innt(t_k) : k\in K_w\}$.
Since $u_{q_w}\supset u_{n_w}$, we have that any finite set containing
$u_{q_w}$ is in $L_w$.  This shows that, for some $K\in\omega$,
$e = \bigcup_{k<K} \innt(t_k) $ is in $L_{w}$ for
each $w\subset \{ 1, \ldots, \ell\}$. Since $e$ is $q_1$-positive
and $q_1\geq q$, it follows that $e$ is $q$-positive.  This completes
the proof that $e\in L_{q,\ell}$. 
\end{proof}
\egroup

For each $s\in \omega^{<\omega}$,
define $\ell_s$ to be the maximum element of the set
$\{1\}\cup \bigcup_{k\in\dom(s)}  \innt(\lambda(s(k)))$.
Now define the infinite branching $S\subset\omega^{<\omega}$ by the
recursive rule that, for each $s\in S$ 
  and $n\in\omega$,
$s^{\frown }n \in S$ if and only if
\begin{enumerate}
  \item 
$\lambda(n) $ is $(E,\ell_s)$-large, and
\item $\lambda(n)$ is built from $\{ (\emptyset, T_n ) : n
  < \max \{ N_{s(k)}: k\in \dom(s)\} \}$
  and $(\emptyset , T_{\ell_s})$.
\end{enumerate}
Claim 1  and Proposition \ref{directpositive}
show that $S$ is ${\mathcal D}^+$-branching. 
The definition of the notion of being $(E,\ell)$-large and the second
criterion of being an element of $S$, ensures that
 $\Vdash_S (\emptyset, \{ \dot r^S_k : k\in \omega\})$ is the mod
finite meet of $E$. Since $\Fn(\mathbb N,2)$ is forcing isomorphic to
$S$,
 the proof of the Lemma now follows from Lemma \ref{makedirected}.
\end{proof}

The next result follows by 
first applying Lemma \ref{makedirected}
to obtain  directed mod finite  extension of $\mathcal D$,
next  applying Lemma 
\ref{ultrasplit}, 
and finally   repeatedly applying Lemma \ref{fuse}
in a recursive construction of length $\mathfrak c$.  

\begin{lemma}
If $\mathcal D $ is a finitely compatible\label{Qstep}   set of pure
conditions   
then
there is a $\Fn( {\mathfrak c}\times\mathbb N ,2)$-name
$\dot {\mathcal D}_1$ such that $\Fn({\mathfrak c}\times\mathbb N,2)$
forces 
 that
\begin{enumerate}
 \item $\dot {\mathcal D}_1\subset\Pure$ 
is finitely directed and includes
 $\mathcal D$,
\item  $Q(\dot {\mathcal D}_1)$ is in  $\mathbb Q_{207}$,  and
\item   the ground model subsets of $\mathbb N$ is not a splitting
  family in the  further forcing extension by $Q(\dot{\mathcal D}_1)$.
\end{enumerate}
\end{lemma}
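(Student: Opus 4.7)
The plan is to identify the forcing $\Fn(\mathfrak{c}\times\mathbb{N},2)$ with a finite-support iteration of length $\mathfrak{c}$ whose $\alpha$-th stage is (forcing-equivalent to) a copy of $\Fn(\mathbb{N},2)$, and to build $\dot{\mathcal{D}}_1$ as an increasing union $\bigcup_{\alpha<\mathfrak{c}}\dot{\mathcal{D}}^\alpha$ of names, using one Cohen step at each stage to take care of one requirement. At stage $0$, I would choose (in the ground model) a $\mathcal{D}^+$-branching $S\subset\omega^{<\omega}$ for the finitely compatible $\mathcal{D}$; since $S$ is forcing-isomorphic to $\Fn(\mathbb{N},2)$, Lemma \ref{makedirected} applied to $S$ yields a $\Fn(\{0\}\times\mathbb{N},2)$-name $\dot{\mathcal{D}}^0\supset\mathcal{D}$ that is forced to be finitely directed in $\Pure$. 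At stage $1$, applied over $\dot{\mathcal{D}}^0$, Lemma \ref{ultrasplit} produces a $\Fn(\{1\}\times\mathbb{N},2)$-name $\dot{\mathcal{D}}^1\supset\dot{\mathcal{D}}^0$, finitely directed and witnessing the splitting property in clause (3): for every $A\subset\mathbb{N}$ in the stage-$0$ model there is a $q\in\dot{\mathcal{D}}^1$ with $\innt(q)\subset A$ or $\innt(q)\cap A=\emptyset$.

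For stages $2\leq\alpha<\mathfrak{c}$, I would use Lemma \ref{fuse} to diagonalize against predense subsets. By Proposition \ref{centered}, every $Q(\dot{\mathcal{D}}^\beta)$ is forced to be $\sigma$-centered, so all its maximal antichains are countable; thus it suffices to handle countable sequences $E=\{(u_n,T_n):n\in\omega\}$ drawn from $Q(\dot{\mathcal{D}}^\beta)$ for $\beta<\mathfrak{c}$. Fix a bookkeeping function $\alpha\mapsto(\beta_\alpha,\dot E_\alpha)$ on $[2,\mathfrak{c})$, with $\beta_\alpha<\alpha$, enumerating every pair (stage, countable subset of that stage's poset) cofinally often. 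At stage $\alpha$, apply Lemma \ref{fuse} inside the model produced by stages below $\alpha$, with the $\alpha$-th Cohen factor as the generic, to obtain a finitely directed $\dot{\mathcal{D}}^\alpha\supset\bigcup_{\beta<\alpha}\dot{\mathcal{D}}^\beta$ such that \emph{either} $\dot E_\alpha$ is not predense in $Q(\dot{\mathcal{D}}^\alpha)$ \emph{or} some $(\emptyset,T)\in\dot{\mathcal{D}}^\alpha$ is a mod finite meet of $\dot E_\alpha$. Take unions at limit stages, and set $\dot{\mathcal{D}}_1=\bigcup_{\alpha<\mathfrak{c}}\dot{\mathcal{D}}^\alpha$.

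Clauses (1) and (3) of the conclusion are then immediate from stages $0$ and $1$ together with Proposition \ref{centered} (which gives $\dot L_{\dot{\mathcal{D}}_1}\setminus\innt(T_q)\subset u_q$, so any splitting-witness $q$ for $A$ forces $\dot L_{\dot{\mathcal{D}}_1}\subset^* A$ or $\dot L_{\dot{\mathcal{D}}_1}\cap A=^*\emptyset$). The nontrivial clause is (2), the $\aleph_1$-directedness mod finite. If $\dot E$ names a countable predense subset of $Q(\dot{\mathcal{D}}_1)$, then by ccc of $\Fn(\mathfrak{c}\times\mathbb{N},2)$ there is a bounded $\gamma<\mathfrak{c}$ with $\dot E\subset Q(\dot{\mathcal{D}}^\gamma)$, and the bookkeeping hands us some $\alpha>\gamma$ with $(\beta_\alpha,\dot E_\alpha)=(\gamma,\dot E)$. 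Since incomparability is preserved under $\cprec$-extensions, non-predensity at any stage persists to the end, so the first alternative of Lemma \ref{fuse} is ruled out by the assumed predensity of $\dot E$ in $Q(\dot{\mathcal{D}}_1)$; hence $\dot{\mathcal{D}}^\alpha\subset\dot{\mathcal{D}}_1$ already contains a mod finite meet of $\dot E$.

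The main obstacle is the bookkeeping and the reflection argument in the last paragraph: one must ensure that any predense subset of the full eventual $Q(\dot{\mathcal{D}}_1)$ is already a subset of some $Q(\dot{\mathcal{D}}^\gamma)$ that the diagonalization catches, and one must be careful that non-predensity at an intermediate stage genuinely rules out predensity later (which is why working with $\cprec$-extensions via Proposition \ref{q207} and Lemma \ref{whymeet}, rather than arbitrary enlargements, is important). Everything else is a routine assembly of the preceding lemmas along a finite-support Cohen iteration of length $\mathfrak{c}$.
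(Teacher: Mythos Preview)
Your outline is exactly the scheme the paper indicates: apply Lemma~\ref{makedirected} once to pass from finitely compatible to finitely directed, apply Lemma~\ref{ultrasplit} once to arrange the non-splitting clause, and then run a length-$\mathfrak{c}$ recursion applying Lemma~\ref{fuse} at each step to handle one countable candidate predense set. The paper's own argument is the single sentence preceding the lemma, so you have filled in precisely the details it leaves to the reader.

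There is, however, one point where your justification is circular. You write that ``incomparability is preserved under $\cprec$-extensions'' and invoke Proposition~\ref{q207} to conclude that non-predensity of $E$ in $Q(\dot{\mathcal D}^\alpha)$ persists to $Q(\dot{\mathcal D}_1)$. But Proposition~\ref{q207} applies only when the intermediate families are already $\aleph_1$-directed mod finite, which is exactly clause~(2) --- the thing you are trying to establish. So you cannot quote $\cprec$ here. The correct (and simpler) argument is that for any finitely directed $\mathcal{E}\subset\Pure$, compatibility of two conditions $(u_1,T_1),(u_2,T_2)\in Q(\mathcal{E})$ depends only on the finite data $u_1,u_2,\innt(T_1),\innt(T_2)$: if the $u$-parts are mutually compatible with the $\innt$-parts, then any common $\geq$-bound $(\emptyset,T_3)\in\mathcal{E}$ of $(\emptyset,T_1),(\emptyset,T_2)$ yields a common extension $(u_1\cup u_2,T_3')$ for a suitable tail $T_3'$ of $T_3$ (this is the content behind Proposition~\ref{centered}). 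Hence compatibility is absolute between $Q(\dot{\mathcal D}^\alpha)$ and $Q(\dot{\mathcal D}_1)$, and a witness $q\in Q(\dot{\mathcal D}^\alpha)$ to non-predensity of $E$ remains such a witness in $Q(\dot{\mathcal D}_1)$. With this correction your reflection argument goes through, and in fact a single (not cofinally-often) enumeration already suffices.
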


We now establish notation that will be useful when preserving
that a family of names is forced to be 
very thin.

\begin{definition} Let $Q\in \mathbb Q_{207}$ and let $\dot f$ be a
  $Q$-name such that $\Vdash_{Q} \dot f\in \mathbb N^{\mathbb N}$. A condition
  $q\in Q$ is $\dot f$-ready if, for each integer $\ell>0$, 
  $w\subset \{1,\ldots, \max(\innt(t^q_{\ell-1}))\}$, and 
  $t^q_\ell$-positive $e$, there is a $w_e\subset e$ such that
  $(w\cup w_e, \{t^q_k : k>\ell\})$
decides the value of
$\dot f(j) < \min(\innt(t^q_{\ell+1}))$
for each $j\in \{1,\ldots, \max(\innt(t^q_{\ell-1}))\} $. 
\end{definition}

\begin{lemma}
For each  $ Q\in \mathbb Q_{207}$ and 
  $  Q$-name  $\dot f$   such that $\Vdash_Q \dot f\in \mathbb
  N^{\mathbb N}$, 
  the set\label{getready} 
 of $\dot f$-ready conditions is a dense subset
  of $Q$.
\end{lemma}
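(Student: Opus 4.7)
The plan is to construct an $\dot f$-ready extension $q\geq q_0$ by applying the $\aleph_1$-directed mod finite property of the family $\mathcal D$ underlying $Q = Q(\mathcal D)$. The key observation is that the definition of a mod finite meet is already very close in form to the definition of $\dot f$-readiness, so the proof reduces to identifying the right predense set and checking that the bookkeeping lines up.

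Let $E\subset Q$ be the set of conditions $(u,T)$ that decide $\dot f(j)$ for every $j\leq\max(u)$; this is dense in $Q$, since any condition can be extended finitely many times to decide the finite list of values $\dot f(1),\dots,\dot f(\max(u))$. Enumerate $E=\{(u_n,T_n):n\in\omega\}$ so that $\max(u_n)\geq n$ and the forced value of $\dot f(j)$ for each $j\leq \max(u_n)$ is bounded by $n$; both are arrangeable because in $E$ one may always adjoin dummy stem elements above the already-decided values. Since $\mathcal D$ is $\aleph_1$-directed mod finite, $E$ has a mod finite meet $(\emptyset,S)\in \mathcal D$ with $S=\{s_j:j\in\omega\}$. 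By finite directedness of $\mathcal D$, pick $(\emptyset,T^*)\in\mathcal D$ stronger than both $(\emptyset,S)$ and $(\emptyset,T_{q_0})$. The plan is to arrange the extension so that consecutive levels of $T^*$ sandwich consecutive levels of $S$: whenever $\innt(t^*_\ell)\cap\innt(s_{j_0})\neq\emptyset$, we have $\max(\innt(t^*_{\ell-1}))<\min(\innt(s_{j_0}))$ and $\min(\innt(s_{j_0+1}))\leq\min(\innt(t^*_{\ell+1}))$. Set $q=(u_0,T^*)$; this is in $Q$ and extends $q_0$ in $\mathcal Q_{Bould}$.

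To check readiness, fix $\ell\geq 1$, $w\subset\{1,\ldots,\max(\innt(t^*_{\ell-1}))\}$, and a $t^*_\ell$-positive $e$. Since $T^*$ refines $S$, there is $j_0$ with $e\cap\innt(s_{j_0})$ being $s_{j_0}$-positive; by the alignment, $w\subset\{1,\ldots,\max(\innt(s_{j_0-1}))\}$. The mod finite meet definition applied at level $j_0$ of $S$ yields $n<\min(\innt(s_{j_0+1}))$ and $w_e\subset e$ such that $(w\cup w_e,\{s_m:m>j_0\})$ extends the enumerated conditions with index up through $\max(\innt(t^*_{\ell-1}))$. The enumeration then supplies a condition deciding $\dot f(j)$ for all $j\leq\max(\innt(t^*_{\ell-1}))$, with decided values bounded by $n<\min(\innt(s_{j_0+1}))\leq\min(\innt(t^*_{\ell+1}))$. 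Since $\{t^*_m:m>\ell\}$ strengthens $\{s_m:m>j_0\}$ in $\mathcal Q_{Bould}$, the condition $(w\cup w_e,\{t^*_m:m>\ell\})$ inherits the same decisions, establishing readiness at level $\ell$.

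The main obstacle is threading together the indexings of the mod finite meet $\{s_j\}$ and of the constructed $\{t^*_\ell\}$, while simultaneously delivering the numerical bound $\dot f(j)<\min(\innt(t^*_{\ell+1}))$ required by the definition of $\dot f$-readiness. This is resolved by the enumeration of $E$ with $\max(u_n)\geq n$ and forced values bounded by $n$ (which converts the mod finite meet's index bound on $n$ into a concrete bound on decided values), together with the level alignment between $S$ and $T^*$ (which propagates this bound through to the interior of $t^*_{\ell+1}$).
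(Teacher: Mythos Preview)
There is a genuine gap in your argument, and it lies in the enumeration step. You claim one can enumerate $E$ as $\{(u_n,T_n):n\in\omega\}$ so that $\max(u_n)\geq n$ and the forced values of $\dot f(j)$ for $j\leq\max(u_n)$ are all bounded by $n$. But take $\dot f$ to be (a name for) the constant function with value $100$. Then any $(u,T)\in E$ with $u\neq\emptyset$ forces $\dot f(1)=100$, so the second constraint requires $n\geq 100$; for $n<100$ you would need $u_n=\emptyset$, which violates $\max(u_n)\geq n$. The justification ``adjoin dummy stem elements above the already-decided values'' does not help: enlarging $u$ raises $\max(u)$, which in turn forces more values of $\dot f$ to be decided, with no control on how large they are. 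So no such enumeration exists.

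The deeper issue is a misreading of what the mod finite meet delivers. At level $j_0$ of $S$ it produces a \emph{single} index $n<\min(\innt(s_{j_0+1}))$ and a $w_e$ with $(w\cup w_e,\{s_m:m>j_0\})\geq (u_n,T_n)$; it gives no lower bound on $n$, and it certainly does not make $(w\cup w_e,\ldots)$ extend \emph{all} $(u_k,T_k)$ with index up through $\max(\innt(t^*_{\ell-1}))$ --- those conditions are typically pairwise incompatible, so no common extension exists. With only an upper bound on $n$, you cannot conclude that $(u_n,T_n)$ decides $\dot f(j)$ for $j$ up to $\max(\innt(t^*_{\ell-1}))$.

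The paper circumvents this by a two-stage construction. For each $k$, it first takes a mod finite meet $(\emptyset,T^k)$ of a predense set of conditions that \emph{all} decide $\dot f\restriction\{1,\ldots,k\}$ (with values below $\min(\innt(T^k_n))$); here the single $n$ from the meet is harmless because every enumerated condition decides the same block of values. It then takes a second mod finite meet of the family $\{(\emptyset,T^k):k\in\omega\}$. At level $\ell$ of this outer meet, the ``tail refines $T_k$ for $k$ up to $\max(\innt(t_{\ell-1}))$'' clause of the definition (the pure conditions $(\emptyset,T_k)$ \emph{are} mutually compatible) guarantees that $t_\ell$ is built from $T^{\bar\ell}$ with $\bar\ell=\max(\innt(t_{\ell-1}))$, and the inner property of $T^{\bar\ell}$ then yields the required decision of $\dot f\restriction\{1,\ldots,\bar\ell\}$ with the correct numerical bound. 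Your single predense set cannot play both roles at once.
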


\begin{proof}
  Let $Q$ and $\dot f$ be as in the statement of the Lemma and let
  $q$ be any element of $Q$. 
 For each $k\in \mathbb N$, there is a pre-dense set
  $\{ (u^k_n , T^k_n) : n\in \omega\}\subset  Q$
  satisfying that $(u_q, T^k_n)\geq q$ and
$(u^k_n, T^k_n)$ forces a 
  value on $\dot f{\restriction}\, \{1,\ldots,k\}$
  for each $n\in\omega$.  By choosing a cofinite subset
  of $T^k_n$ we may assume also that
$(u^k_n, T^k_n)$ forces that
the range of $\dot f{\restriction \,}\{1,\ldots,k\}$ is contained
in    $\min(\innt(T^k_n))$.
  Since $  Q\in 
   \mathbb Q_{207}$, there is, for each $k\in \mathbb N$, 
   a condition $
   (\emptyset , \{ t^k_\ell : \ell\in \omega\})\in 
 Q$ which is the mod finite meet of the predense
set $\{ (u^k_n, T^k_n) : n\in \omega\}$. We recall
that this means that for each $\ell\in \omega$,
 $w\subset\{1,\ldots
 \max({\mathop{int}(t^k_{\ell-1})} )\}$ and $t^k_\ell$-positive $e$,
there is a $w_e\subset e$, such that
$(w\cup w_e, \{ t^k_m : \ell<m \in \omega\})$
decides the
value of $\dot f \restriction \{1,\ldots,k\} $. Also,
if $n$ was the value  witnessing that
$(w\cup w_e, \{ t^k_m : \ell< m\in \omega\})\geq (u^k_n, T^k_n)$
then  the range of $\dot f {\restriction}\, \{1,\ldots,k\} $
is also forced to be contained in $\min(\innt(T^k_n))\leq
\min(\innt(t^k_{\ell+1}))$.

Let $T_0 = \{ t^q_\ell : \ell\in \omega\}$ and
 for each $k\in\mathbb N$,
 let $T_k = \{ t^k_\ell : \ell \in \omega\}$. 
Choose $(\emptyset, \{ t_\ell : \ell \in \omega\})$ to be
 a mod finite meet of the family
 $\{ (\emptyset,T_k) : k\in \omega \}$.
It follows easily from the definition that $(\emptyset,
 \{t_\ell : \max(u_q)<\ell\in \omega\})$ is also a mod finite meet
of this family, and so by re-indexing, we can assume
that
  $t_\ell$ is built from a finite subset of
 $T_{\max(\innt(t_{\ell-1}))}$ for each $\ell \in\omega$,
 and that $\max(u_q) < \min(\innt(t_0))$.
We check that $(u_q, \{ t_\ell : \ell\in \omega\})$ is
$\dot f$-ready. Consider  any $\ell > 0$ and
let $\bar\ell$ denote $ \max\innt(t_{\ell-1})$.
Choose any
$w\subset \{ 1, \ldots,\bar\ell \}$
and $t_{\ell}$-positive $e$.
 Choose any 
 $k$ so that  $e\cap \innt(t^{\bar\ell}_k)$ is
 $t^{\bar\ell}_k$-positive. Choose $w_e\subset (e\cap
 \innt(t^{\bar\ell}_k))$ so 
 that
 $(w\cup w_e, \{t^{\bar\ell}_m : k< m\in \omega\})$ decides
the value of $\dot f\restriction\{1,\ldots, \bar\ell\}$.
Since $(\emptyset,\{ t_m : m\in \omega\})$ is a mod finite meet of the
sequence $\{ (\emptyset, T_m ) : m\in \omega\}$, we also have
that $(\emptyset , \{ t_m : \ell < m\in \omega\}) \geq
 (\emptyset , \{ t^{\bar\ell}_m : k < m\in \omega\})$. Therefore
 $(w\cup w_e , \{t_m : \ell < m\in \omega\})\geq
  (w\cup w_e , \{ t^{\bar\ell}_m : k < m\in \omega\})$. 
This proves that $(w\cup w_e, \{ t_m : \ell < m\in \omega\})$ 
decides the value of $\dot f\restriction\{1,\ldots, \ell\}$.
   \end{proof}

\begin{corollary}
 Suppose that $\az \in \APv$ and let\label{boulderstep}
 $|P^{\az}_\kappa|\leq \theta=\theta^{\aleph_0} < \lambda$
 and let $\az_1$ denote the Cohen${}^{\theta}$-extension of $\az$.
 Then there is a $\bz\in \APv$ and a sequence
  $\{ {\mathcal D_i} : i<\kappa\}$ satisfy that, for $i<j<\kappa$,
 \begin{enumerate}
\item $\az_1 \lap 0\bz$ and $\mathcal A^{\bz}_i = \mathcal A^{\az}_i$,
\item $  {\mathcal D_i}\subset
  \wp(\mathcal L_1, P^{\az_1}_i)$,
 \item ${\mathcal D_i}\subset {\mathcal D_j}$,
 \item   
    $P^{\az_1}_i$      forces that $\{\emptyset\}\times \mathcal D_i=
\{(\emptyset ,T) : T\in 
    \mathcal D_i\}$ is a subset of $\Pure$ and
    is $\aleph_1$-directed mod finite,
\item 
  $P^{\bz}_i = P^{\az_1}_i * Q(\{\emptyset\}\times\mathcal D_i)$,
  \item $P^{\bz}_{j}$ forces that $\wp(\mathbb N, P^{\az}_i)$ is not a
    splitting family.
 \end{enumerate}
 \end{corollary}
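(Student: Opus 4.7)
The plan is to construct the sequence $\{\mathcal D_i : i<\kappa\}$ by induction on $i$, in parallel with Corollary~\ref{laverstep} but using Lemma~\ref{Qstep} in place of the Ramsey-ultrafilter step. At each stage $\mathcal D_i$ will be a $P^{\az_1}_i$-name for an $\aleph_1$-directed mod finite subfamily of $\Pure$, arranged so that the $Q(\mathcal D_i)$-generic real $\dot L_{\mathcal D_i}$ is not split by any subset of $\mathbb N$ in $V^{\az_1}_i$. At successor stages $i\mapsto i+1$, the Cohen factor of $P^{\az_1}_{i+1}/P^{\az_1}_i$ contains a regular copy of $\Fn(\mathfrak c\times\mathbb N,2)$ (since $\theta=\theta^{\aleph_0}\geq\mathfrak c$), so in $V^{\az_1}_i$ I would apply Lemma~\ref{Qstep} to $\mathcal D_i$ inside this copy to obtain $\mathcal D_{i+1}\supseteq\mathcal D_i$ with the required properties. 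At limit stages, set $\mathcal D_i=\bigcup_{j<i}\mathcal D_j$; the union is finitely compatible, so $Q(\mathcal D_i)$ is $\sigma$-centered (Proposition~\ref{centered}), and any predense subset of $Q(\mathcal D_i)$ admits a countable predense refinement supported in some $Q(\mathcal D_j)$ with $j<i$, where the inductive $\aleph_1$-directedness provides a mod finite meet.

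Conditions (1)--(5) follow from this construction together with Proposition~\ref{q207}, which gives that $\{P^{\bz}_i=P^{\az_1}_i*Q(\mathcal D_i):i<\kappa\}$ is a continuous $\cprec$-chain. Condition (6) is obtained from clause~(3) of Lemma~\ref{Qstep}: the generic $\dot L_{\mathcal D_i}$ is forced not to be split by any subset of $\mathbb N$ in $V^{\az_1}_i$, and in particular not by any element of $\wp(\mathbb N,P^{\az}_i)\subseteq V^{\az_1}_i$; this witness to non-splitting is preserved in each further $P^{\bz}_j$-extension.

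The substantive obstacle is to verify $\bz\in\APv$, namely that $\mathcal A^{\bz}_i=\mathcal A^{\az}_i$ stays very thin over $V^{\bz}_i=V^{\az_1}_i[G_{Q(\mathcal D_i)}]$ when viewed in $V^{\bz}_{i+1}$. By Proposition~\ref{cohen} applied to the Cohen${}^{\theta}$-extension, $\mathcal A^{\az}_i$ is already forced by $P^{\az_1}_{i+1}$ to be very thin over $V^{\az_1}_i$, so the task is to reduce a $P^{\bz}_i$-name $\dot g$ of a function in $\mathbb N^{\mathbb N}$ to a dominating function in $V^{\az_1}_i$. Given $(p_0,q_0)\in P^{\bz}_{i+1}$, a $P^{\bz}_{i+1}$-name $\dot a$ of a member of the ideal generated by $\mathcal A^{\az}_i$, and such a $\dot g$, view $\dot g$ in $V^{\az_1}_i$ as a $Q(\mathcal D_i)$-name; by Lemma~\ref{getready} the $\dot g$-ready conditions are dense in $Q(\mathcal D_i)$. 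Using $Q(\mathcal D_i)\cprec Q(\mathcal D_{i+1})$ from Proposition~\ref{q207} and the projection property of Proposition~\ref{projection}, one projects $q_0$ to $\pi(q_0)\in Q(\mathcal D_i)$, extends to a $\dot g$-ready $q^{*}\in Q(\mathcal D_i)$, and finds a common extension $q_1\in Q(\mathcal D_{i+1})$ of $q^*$ and $q_0$. Readiness packages a function $g_{q^*}\in V^{\az_1}_i$ defined by $g_{q^*}(j)=\min(\innt(t^{q^*}_{\ell+1}))$ for $j$ in the $\ell$-th level of $q^*$, with $q^*\Vdash\dot g(j)<g_{q^*}(j)$ for all but finitely many $j$. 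Applying the very-thin property of $\mathcal A^{\az}_i$ over $V^{\az_1}_i$ to $g_{q^*}$, genericity supplies an extension $p_1\geq p_0$ in $P^{\az_1}_{i+1}$ and an integer $n$ with $p_1\Vdash\dot a\cap[n,g_{q^*}(n)]=\emptyset$, from which $(p_1,q_1)\Vdash\dot a\cap[n,\dot g(n)]=\emptyset$ because $q_1\geq q^*$. The delicate point is this coordination: $q^*$ must lie in the smaller $Q(\mathcal D_i)$ so that $g_{q^*}$ is in $V^{\az_1}_i$ (the model against which very thinness is measured), while being compatible with the original $q_0\in Q(\mathcal D_{i+1})$.
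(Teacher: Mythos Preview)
Your outline matches the paper's construction in shape (recursion on $i<\kappa$, Lemma~\ref{Qstep} supplying the $\aleph_1$-directed extensions, Proposition~\ref{q207} for the $\cprec$-chain), but the successor step, where the very-thin property of $\mathcal A^{\az}_i$ must be preserved, contains a genuine gap.

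The claim ``$q^*\Vdash\dot g(j)<g_{q^*}(j)$ for all but finitely many $j$'' is not what $\dot g$-readiness gives. Readiness says only that for each level $\ell$, each $w$ below level $\ell$, and each $t^{q^*}_\ell$-positive $e$, there is some $w_e\subset e$ such that the \emph{particular extension} $(w\cup w_e,\{t^{q^*}_k:k>\ell\})$ forces the bound on $\dot g\restriction\{1,\ldots,\max(\innt(t^{q^*}_{\ell-1}))\}$. The condition $q^*$ itself forces nothing of the sort, and neither does an arbitrary $q_1\geq q^*$: the levels of $q_1$ need not meet level $\ell(n)+1$ of $q^*$ for the specific $n$ that the very-thin property hands you. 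Thus your step~``$(p_1,q_1)\Vdash\dot a\cap[n,\dot g(n)]=\emptyset$ because $q_1\geq q^*$'' fails, and the coordination you flag as ``delicate'' is in fact not achieved.

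The paper resolves this by building extra pure conditions into $\mathcal D_{i+1}$, using a fresh Cohen real. Working in $V[G_{i+1}*H_i]$, let $\{n_m:m\in\omega\}$ enumerate a Cohen subset $\dot x_0$ of $\mathbb N$ added by the first coordinate of $\Fn(\{i{+}1\}\times\theta\times\mathbb N,2)$. For each $a$ in the ideal generated by $\mathcal A^{\az}_i$, set $I(a)=\{m:a\cap[n_m,n_{m+1}]=\emptyset\}$; for each $q\in Q(\mathcal D_i)$ let $J(a,q)$ be the set of levels $\ell$ of $q$ for which some $m\in I(a)$ satisfies $n_m\leq\max(\innt(t^q_{\ell-1}))<\min(\innt(t^q_{\ell+1}))\leq n_{m+1}$. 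One checks that the family $\mathcal E_0=\{(\emptyset,\{t^q_\ell:\ell\in J(a,q)\}):a,q\}$ is finitely directed, and then Lemma~\ref{Qstep} produces $\mathcal D_{i+1}\supset\mathcal D_i\cup\mathcal E_0$. The point is that for each $\dot f$-ready $q\in Q(\mathcal D_i)$ and each $a$, the condition $q(a)=(u_q,\{t^q_\ell:\ell\in J(a,q)\})$ lies in $Q(\mathcal D_{i+1})$ and \emph{forces} $(\exists n)\ a\cap[n,\dot f(n)]=\emptyset$: any $r\geq q(a)$ has a level built from $\{t^q_\ell:\ell\in B\}$ with $B\subset J(a,q)$, so picking $\ell\in B$ with a $t^q_\ell$-positive $e$ inside $r$, readiness supplies an extension forcing $\dot f(n_m)<\min(\innt(t^q_{\ell+1}))\leq n_{m+1}$, and $a\cap[n_m,n_{m+1}]=\emptyset$ by choice of $\ell\in J(a,q)$. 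Since the $\dot f$-ready conditions are predense in $Q(\mathcal D_{i+1})$ (Lemma~\ref{getready} plus Proposition~\ref{q207}), this suffices. The Cohen real is essential here: it lets one thin every $q$ down to levels that are good for \emph{each individual} $a$, so that no matter which level a later extension hits, the gap is already arranged.

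A smaller issue: at limits of countable cofinality the bare union $\bigcup_{j<i}\mathcal D_j$ need not be $\aleph_1$-directed mod finite (a countable predense set need not lie in any $\mathcal D_j$), so the paper re-applies Lemma~\ref{Qstep} there as well, using the fresh factor $\Fn(\{\bar\i\}\times\theta\times\mathbb N,2)$.
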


 \begin{proof}
   Let $\az\in \APv$ and
   $\kappa\leq |P^{\az}_\kappa|^{\aleph_0}\leq\theta<\lambda$. If
   needed, we can 
   first extend $\az$ by applying Lemma
   \ref{cohen}, so as to assume that $\Vdash_{P^{\az}_i } \mathfrak c
   = \theta$ for all $i<\kappa$.  Before proceeding we note that
it follows from Proposition \ref{cohen} and Lemma \ref{q207}, that
condition 
(4) will imply that
 $\{ P^{\bz}_i : i< \kappa\}$ is a continuous $\cprec$-chain of ccc
 posets as required in the definition of $\APv$.   
We construct the sequence
     $\{ \mathcal D_i : i<\kappa\}$ by recursion  on $i<\kappa$. 
 It follows from
    Lemma \ref{Qstep} that there is a set
     $\mathcal D_0\subset \wp(\mathcal L_1,P^{\az}_0*\Fn(\{0\}\times\theta
\times \mathbb
N,2))$  such that $\{\emptyset\}\times \mathcal D_0$ is forced
to be a subset
     of $\Pure$ that is $\aleph_1$-directed mod finite. Assume that
     $\bar\i<\kappa$ and that $\{ \mathcal D_i: i<\bar\i\}$ has been
     constructed 
so that for $i<j<\bar\i$,  properties (2)-(6) hold  and so that
$P^{\bz}_{i+1}$ forces that $\mathcal A^{\az}_i$ is very thin over
the forcing extension by $P^{\bz}_i$. It will be most convenient to
continue the argument in a forcing extension.

Let $G_\kappa\subset P^{\az}_\kappa$ be a generic filter and,
 for each $i<\kappa$, let $G_i = G_\kappa\cap P^{\az}_i$. For each
 $i<\kappa$, let $H_i\subset \Fn(i{+}1\times\theta\times\mathbb N,2)$
 be a filter so that $G_i\times H_i$ is a generic filter for
 $P^{\az}_i*\Fn(i{+}1\times\theta\times\mathbb N,2)$. It follows
 that $\bar G = 
G_{\bar \i}*\left(\bigcup \{H_i : i<\bar\i\}\right)$ is a
 generic filter for $P^{\az}_{\bar\i}*\Fn(\bar\i\times
 \theta\times\mathbb N,2)$.  We work in the forcing extension $V[\bar
 G]$. We first handle the case when $\bar\i$ is a limit ordinal.  By
 the definition of the family $\APv$,  the sequence
 $\{ \mathcal A^{\az}_i : i<\kappa\}$ is not a concern,
 as in Definition \ref{apv}, when defining
 $P^{\bz}_{\bar\i}$ in the limit case.  
 It
 should be clear that the $\bar G$-interpretation of the 
collection   $\mathcal E = \bigcup \{ \{\emptyset\}
\times \mathcal D_i : i< \bar\i\}$ is a finitely directed subset of
$\Pure$. We proceed as in the base case. By Lemma \ref{Qstep}, there
is a $\Fn(\{\bar\i\}\times\theta\times\mathbb N,2)$-name,
$\dot{\mathcal E}'$,
 of
a subset of $\Pure$
that is forced to be an $\aleph_1$-directed mod finite extension of
$\mathcal E$  that further forcing by $Q(\dot{\mathcal E}')$
ensures that the family $[\mathbb N]^{\aleph_0}\cap
V[\bar G]$  is not a splitting family.  The family
 $\mathcal D_{\bar\i}$ is a subset of $\wp(
\mathcal L_1,P^{\az}_{\bar\i}*\Fn(\bar\i{+}1\times\theta
\times \mathbb N,2))$ that contains $\mathcal D_i$ for all $i<\bar\i$
and is forced to satisfy that $\dot{\mathcal E}'$ is equal
to $\{ \{\emptyset\}\times T : T\in \mathcal D_{\bar\i}\}$.

Now we may assume that $\bar\i = i+1$ and we note that $\mathcal
A^{\az}_i$ is a family of $P^{\az}_{i+1}$-names that is forced to be
very thin over the forcing extension by $P^{\az}_i$. It follows from
Lemma \ref{cohen} that $\mathcal A^{\az}_i$ is forced to be very thin
over the model $V[G_i*H_i]$. We again work in the forcing extension
$V[\bar G]$ where $\bar G = G_{i+1} *H_i$.  Let $\mathcal A$ denote
the ideal generated by the $\bar G$ interpretations of the names from
 $\mathcal A^{\az}_i$. 
Let $\mathcal E$ denote the
$\aleph_1$-directed mod finite family
  $\{ \{\emptyset\}\times T : T \in \mathcal D_i\}$.  
Let $\dot x_0$ denote the canonical subset
   of $\mathbb N$ added by $\Fn(\{(i+1,0)\}\times \mathbb N,2)$ over
   the model $V[\bar G]$ as in Definition \ref{cohsequence}. Let $\{
\dot  n_m : m\in \omega\}$ denote the name of the increasing
enumeration of $\dot x_0$. 
For all $a\in \mathcal A$, let
$\dot I(a)$  be a canonical $\Fn(\{(i+1,0)\}\times \mathbb
N,2)$-name for the set $\{ m\in \mathbb N : a\cap [\dot n_{m},
 \dot n_{m+1}] =\emptyset \}$.  For each $q\in Q(\mathcal E)$,
let $\dot J(a,q)$ be a canonical $\Fn(\{(i+1,0)\}\times \mathbb
N,2)$-name for the set 
$ \{ \ell\in \mathbb N:
(\exists m \in \dot I(a))~~
\dot n_m \leq \max(\innt(t_{\ell-1})) < \min(\innt(t_{\ell+1}))
\leq \dot n_{m+1}\}$.

\begin{claim}
  The family $\mathcal E_0 = 
\{ (\emptyset, \{t^q_\ell : \ell\in \dot J(a,q)\}) :
  a\in \mathcal A, q\in Q(\mathcal E)\}$ is forced to be a
finitely directed subset of $\Pure$.
\end{claim}

\bgroup

\def\proofname{Proof of Claim:~}

\begin{proof}
Each $q\in \mathcal E$ is in the model $V[G_i*H_i]$
and each $a\in \mathcal A$ is thin over that model. Therefore
there is an infinite set of $\ell\in\mathbb N$ such
that $a$ is 
disjoint from $[\max(\innt(t_{\ell-1})), \min(\innt(t_{\ell+1}))]$.
  This implies, by a simple genericity argument,
that  $\dot J(a,q)$ is 
forced to be an infinite set
for each $a\in \mathcal A$ and $q\in \mathcal E$.
Let $H$ be the 
generic filter for $\Fn(\{(i+1,0)\}\times \mathbb N,2)$
that is equal to $H_{i+1}\cap \Fn(\{(i+1,0)\}\times \mathbb N,2)$.
For all $a\in \mathcal A$ and $q\in \mathcal E$, let
$I(a)$ and $J(a,q)$ denote the interpretations by $H$
of
$\dot I(a)$ and $\dot J(a,q)$ respectively. Similarly
let $\{ n_m : m\in \mathbb N\}$ denote the increasing enumeration of
the interpretation of $\dot x_0$.

Evidently, if $a_1\subset a_2$ are elements of $\mathcal A$,
 then $I(a_2)$ is  a subset of $ I(a_1)$.
 To prove the claim it suffices to assume that if $q_2\geq q_1$ are in
 $\mathcal E$, then, for each $\ell\in  J(a,q_2)$,
 $t^{q_2}_\ell$ is built from a finite subset of
 $\{ t^{q_1}_k : k\in  J(a,q_1)\}$.  Fix any
  $\ell \in J(a,q_2)$ and choose  minimal finite subsets
  $B_{\ell-1},B_\ell$ and $B_{\ell+1}$  of $ \omega$ 
  such that, for each $r\in\{-1,0,1\}$, 
  $t^{q_2}_{\ell+r}$ is  built from
  $\{ t^{q_1}_k : k\in B_{\ell+r}\}$.  Let $k_{0}$ be the
  maximum element of $B_{\ell-1}$ and let $k_1$ be the minimum element
  of $B_{\ell+1}$. From the definition of $\mathcal Q_{Bould}$,
  we have, for each $k\in B_\ell$, 
  that
  \begin{enumerate}
\item  $k_{0}< k < k_{1}$,
\item  $\max(\innt(t^{q_2}_{\ell-1}))
  \leq \max(\innt(t^{q_1}_{k_{0}}))\leq
  \max(\innt(t^{q_1}_{k-1}))  $, and
  \item $  \min(\innt(t^{q_1}_{k+1}))
\leq \min(\innt(t^{q_1}_{k_{1}}))
    \leq \min(\innt(t^{q_2}_{\ell-1}))
$.
  \end{enumerate}
  Fix the unique $m\in I(a)$ such
  that $n_m\leq \max(\innt(t^{q_2}_{\ell-1})) <
  \max(\innt(t^{q_2}_{\ell+1})) <   n_{m+1}$, and now conclude
  that
  $n_m\leq \max(\innt(t^{q_1}_{k-1})) <
  \max(\innt(t^{q_1}_{k+1})) <   n_{m+1}$. This proves
  that $B_\ell\subset J(a,q_1)$ as required.
\end{proof}

Let
$\widetilde H$ be the generic filter
 $H_{i+1}\cap \Fn(\{i+1\}\times \theta\times\mathbb N,2)$.
For each $q\in Q(\mathcal E)$ and $a\in \mathcal A$,
let $q(a)\in Q({\mathcal E}_0)$ denote the condition
$(u_q, \{ t^q_\ell : \ell\in J(a,q)\})$.  

\begin{claim} In the 
  forcing extension  $V[\bar G*\widetilde H]$
there is a family $\mathcal E_1\subset \Pure$ such that
\begin{enumerate}
\item $\mathcal E_0\cup \mathcal E$ is a subset of $ \mathcal E_1$,
\item $\mathcal E_1$ is $\aleph_1$-directed mod finite,
\item
     the family $[\mathbb N]^{\aleph_0}\cap V[\bar G]$ is not a
     splitting family
      in the further forcing extension by $Q(\mathcal E_1)$,
    \item for each $Q(\mathcal E)$-name $\dot f\in V[G_i*H_i]$
 of an element of
       $\mathbb N^{\mathbb N}$ and 
       each $\dot f$-ready  $q\in Q(\mathcal E)$,
       $q(a)\Vdash_{Q({\mathcal E}_1)} (\exists n)~~ a\cap
       [n, \dot f(n)] = \emptyset$ for
       each $a\in \mathcal A$.
\end{enumerate}
  \end{claim}

  \begin{proof}  We simply apply Lemma \ref{Qstep} to select
    $\mathcal E_1$.  This ensures that conditions (1)-(3) hold.
    Now we verify that (4) holds. Let $\dot f$, $a$ and $q$ be
    as in the statement of (4). Let $r\in Q(\mathcal E_1)$ be any
    condition stronger than $q(a)$. Fix any $k$ so that
     $\max(u_r) < \min(\innt(t^r_k))$. Since $r\geq q(a)$, there is a
     finite subset $B$ of $J(a,q)$ such that $t^r_k$ is built from
     $\{ t^q_\ell : \ell \in B\}$. Choose any $\ell\in B$ such
     that $e = \innt(t^r_k)\cap \innt(t^q_\ell)$ is
     $t^q_\ell$-positive. Since $\ell\in J(a,q)$, there is an 
$m\in I(a)$ so
     that
      $n_m \leq \max(\innt(t^q_{\ell-1})) < \min(\innt(t^q_{\ell+1}))
      < n_{m+1}$. Since $q$ is $\dot f$-ready,  there is a $w_e\subset e$
      such that $(u_r\cup w_e, \{t^q_j : \ell<j\in \omega\})$ forces
      that $\dot f(n_m) < \min(\innt(t^q_{\ell+1}))$.
     Since $m\in I(a)$, this completes
      the proof that $r$ has an extension forcing
      that $a$ is disjoint from $[n_m, \dot f(n_m)]$. 
  \end{proof}

  \egroup

  The proof of the Corollary is completed by choosing a
  subset
  $\mathcal D_{\bar\i}$  of $ \wp(\mathcal L_1,
   P^{\az}_{\bar\i}* \Fn(\bar\i+1 \times \theta\times\mathbb N,2))$
   (recall that $\bar\i = i+1$) so that $\mathcal D_i\subset
   \mathcal D_{\bar\i}$ and
   $\{ \{\emptyset\}\times T : T\in \mathcal D_{\bar\i} \}$ is forced
   to equal $\mathcal E_1$.  We prove that $P^{\bz}_{i+1}$ forces
   that $\mathcal A_i$ is very thin over the forcing extension
   by $P^{\bz}_{i}$. Recall that $G_{i+1}$ is a
   $P^{\az}_{i+1}$-generic filter and, similarly,
   $G_i = G_{i+1}\cap P^{\az}_i$ is $P^{\az}_i$-generic.
   Let $\mathcal E_i$ denote the interpretation of
   $\{\emptyset\}\times \mathcal D_i$, and similarly,
   let $\mathcal E_{i+1}$ denote the interpretation of
   $\{\emptyset\}\times \mathcal D_{i+1}$.
    We already know
   that $\mathcal A_i$ is forced to be very thin over the model
    $V[G_i*H_i]$, so it suffices to consider a 
$Q(    \mathcal E_i)$-name $\dot f$ in $V[G_i*H_i]$ of an element of
     $\mathbb N^{\mathbb N}$. By Lemma     \ref{getready}, the set of
     $\dot f$-ready conditions is a dense subset of
      $Q(\mathcal E_i)$. Since $\mathcal E_i$ is $\aleph_1$-directed
      mod finite, we also have, by Proposition \ref{q207}, that
      the set of $\dot f$-ready conditions from $Q(\mathcal E_i)$
      is a pre-dense subset of $Q(\mathcal E_{i+1})$. The result now
      follows from item (4) of Claim 3. 
 \end{proof}

\begin{definition}
 For any $\az\in \APv$ we will say\label{boulderdef}
  that $\bz\in \APv$ is a $Q(\vec{\mathcal D})$-extension
  of $\az$ if there is  cardinal $\theta<\lambda$ with
   $|P^{\az}_\kappa| \leq \theta = \theta^{\aleph_0}$ and
  a sequence
  $\{ \mathcal D_i : i<\kappa\}$ such that for all $i<j<\kappa$, 
 
\begin{enumerate}
\item  $\az_1 \leq^0_{\AP} \bz$ where $\az_1$ is the
  Cohen${}^\theta$-extension of $\az$,
  \item $\mathcal A^{\az}_i = \mathcal A^{\bz}_i$,
\item $\mathcal D_i \subset \wp(\mathcal L_1,P^{\az_1}_i)$,
 \item $\{\emptyset\}\times 
   \mathcal D_i$ is forced by $P^{\az_1}_i$
 to be an
   $\aleph_1$-directed subset of $\Pure$,
 \item $\mathcal D_i \subset \mathcal D_j$, 
 \item $P^{\bz}_i = P^{\az_1}_i * Q (\{\emptyset\}\times\mathcal
   D_i)$,
   \item $P^{\bz}_{j}$ forces that $\wp(\mathbb N, P^{\az}_i)$ is not
     a splitting family.   
\end{enumerate}
\end{definition}

Now we formulate the $\APv$ version of Theorem \ref{mainAP}.

\section{$\pi p(\mathfrak U) \leq \kappa =
\mathfrak b $ and 
$ \mathfrak s = \lambda$}

Fix, as in Section 4, a
1-to-1 function $h$ from $\lambda$ onto  $H(\lambda)$.
Recall that by our assumption
Hyp$(\kappa,\lambda)$, 
 $E$ is a stationary subset
 of   $  S^{\lambda}_\kappa$
 and 
$\{ C_\alpha : \alpha \in \lambda\}$
is  $\square$-sequence. 
We let  $\{ X_\alpha : \alpha \in E\}$
be the
$\diamondsuit$-sequence on $\lambda$
as postulated in by Hyp$(\kappa,\lambda)$.

\begin{theorem}
Assume Hyp$(\kappa,\lambda)$.
 There\label{mainAPv} is a sequence $\{ \az_\alpha , \zeta_\alpha :
 \alpha\in \lambda\}$ 
 such that for each limit $\delta\in \lambda$:
 
\begin{enumerate}
\item the sequence $\{ \az_\alpha :\alpha < \delta\}$ is  $\lap
  *$-increasing
  subset of $\APv$,
  \item   $\{ \zeta_\alpha : \alpha <\delta\}\subset\lambda$ is
    non-decreasing, 
and   $\zeta_\delta \in \lambda$ is the supremum,
 \item if $\delta\notin E$, 
 the sequence $\{ \az_\alpha : \alpha\in \acc(C_\delta) \cup \{\delta\}\}$ is
  a
    $\lap 0$-increasing continuous chain,
    \item if $\delta\in E$ and\label{iultrav}
     $\mathcal E_\delta = \{ h(\xi) : \xi\in X_\delta\}$ is
    a maximal 
   subset of $\wp(\mathbb N,  P^{\az_\delta}_\kappa ) $ that is forced
   by $P^{\az_\delta}_\kappa$ to be a free
   ultrafilter on $\mathbb N$, then $\mathcal E_\delta\cap 
    \mathcal A^{\az_\delta}_i$ is not empty for all $i<\kappa$,
    \item $\az_{\delta+1}$ is the Cohen$^{\omega_1}$-extension of $\az_{\delta}$
    and $\zeta_{\delta+1} = \zeta_\delta$,

    \item if $\alpha = \delta+1$ then
     $\zeta_{\alpha+1} = \zeta_{\alpha}$ 
   and $\az_{\alpha+1} $ is a
      Cohen$^{\theta_\alpha}$-extension of $\az_{\alpha}$ where
   $\theta_\alpha= |P^{\az_{\alpha}}_\kappa|^{\aleph_0}$
      \item if $\alpha = \delta+2$, then $\zeta_{\alpha+1}=\zeta_\alpha$
      and $\az_{\alpha+1}$ is a\label{iboulder} 
      $Q(\vec{\mathcal D})$-extension of $\az_{\delta}$,
      \item if $\alpha \in (\delta+2,\delta+\omega)$, then 
       $\zeta_{\alpha+1}$ is the minimal value strictly above $\zeta_\alpha$
       such that $\dot Q_{\alpha+1} = h(\zeta_{\alpha+1}-1)$
      has cardinality less than $\kappa$ and\label{ivccc}   is a 
         $P^{\az_{\alpha}}_\kappa$-name of a poset that is forced to be ccc,
         and 
      $\az_{\alpha+1} = \az_{\alpha}*\dot Q_{\alpha+1}$ as in Definition
       \ref{cccstep}. 
\end{enumerate}
\end{theorem}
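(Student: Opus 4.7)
The plan is to carry out the recursive construction in parallel with the proof of Theorem~\ref{mainAP}, with the $\APv$ machinery substituted for the $\AP$ machinery at each step. As before, I proceed by induction on limits $\delta<\lambda$. The base data is $\az_0\in\APv$ with $P^{\az_0}_i=\Fn(i\times\mathbb N,2)$ and $\mathcal A^{\az_0}_i=\emptyset$, with $\zeta_n=0$ for $n\in\omega$, and the finite successor cases for $n\in\omega$ handled by Cohen$^\omega$-extensions (these are vacuously very thin since the $\mathcal A$-sequence is empty so far, and the successor/limit arithmetic in (5)--(8) is shifted accordingly).

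For a limit $\delta\notin E$, I treat the two subcases as in the proof of Theorem~\ref{mainAP}: if $\acc(C_\delta)$ is cofinal in $\delta$, then by the induction hypothesis $\{\az_\alpha:\alpha\in\acc(C_\delta)\}$ is a $\lap 0$-increasing continuous chain in $\APv$, and I apply Lemma~\ref{ctschain} to produce $\az_\delta\in\APv$ with $\az_{\min(\acc(C_\delta))}\lap 0\az_\delta$; otherwise I enumerate the tail of $C_\delta$ past its last accumulation point as a sequence $\{\alpha_n:n\in\omega\}$, pick $i<\kappa$ with $\{\az_{\alpha_n}:n\in\omega\}$ a $\lap i$-increasing continuous chain, and again invoke Lemma~\ref{ctschain}. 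The successor steps (5), (6) and (8) proceed exactly as in Theorem~\ref{mainAP}: step (5) is Cohen$^{\omega_1}$-extension via Proposition~\ref{cohen} (which preserves $\APv$ since $\mathcal A^{\bz}_i=\mathcal A^{\az}_i$ and Cohen forcing does not disturb very thinness), step (6) is a Cohen$^{\theta_\alpha}$-extension for bookkeeping, and step (8) is handled by Lemma~\ref{smallccc} via Definition~\ref{cccstep}, which explicitly handles $\APv$.

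The two places where $\APv$-specific work is needed are steps (7) and (4). For step (7), after $\az_{\delta+2}$ has been obtained (which already incorporates a Cohen$^{\theta}$ step large enough for $\theta=|P^{\az_{\delta+2}}_\kappa|^{\aleph_0}$), I apply Corollary~\ref{boulderstep} to produce a $Q(\vec{\mathcal D})$-extension $\az_{\delta+3}\in\APv$ in the sense of Definition~\ref{boulderdef}; this simultaneously guarantees that $\wp(\mathbb N,P^{\az_{\delta+2}}_i)$ is forced not to be a splitting family in $P^{\az_{\delta+3}}_j$, which is the ingredient that will force $\mathfrak s=\lambda$ in the final model. For step (4), given $\delta\in E$ I construct $\mathcal E_\delta$ exactly as in the proof of Theorem~\ref{mainAP}: if $\{h(\xi):\xi\in X_\delta\}$ is forced to have the finite intersection property as a subset of $\wp(\mathbb N,P)$ where $P=\bigcup\{P^{\az_\alpha}_\kappa:\alpha\in\acc(C_\delta)\}$, use Zorn to extend it to a maximal such family, otherwise pick any maximal one; verify it is forced to be an ultrafilter by the same ``rename names off the support of $p$'' trick; and then invoke Lemma~\ref{APvkappa} (after re-indexing $\acc(C_\delta)$ by $\kappa$, which is legitimate since the induction hypothesis ensures each $\az_{\alpha+1}$ for $\alpha\in\acc(C_\delta)$ is a Cohen$^{\omega_1}$-extension of $\az_\alpha$) to obtain $\az_\delta\in\APv$ satisfying $\mathcal E_\delta\cap\mathcal A^{\az_\delta}_i\neq\emptyset$ for all $i<\kappa$, with $\az_\alpha\lap * \az_\delta$ for all $\alpha<\delta$.

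The main technical obstacle is verifying at step (7) that the $Q(\vec{\mathcal D})$-extension produced by Corollary~\ref{boulderstep} is indeed compatible with the $\lap *$-chain already built, i.e., that the forced very-thinness of $\mathcal A^{\az_\alpha}_j=\mathcal A^{\bz}_j$ over $P^{\bz}_j$ is preserved despite replacing the ccc tail with $Q(\{\emptyset\}\times\mathcal D_i)$. This preservation is exactly what Corollary~\ref{boulderstep}(1),(2) asserts, but to apply it I must have first arranged that $\mathfrak c$ in the relevant intermediate extension is at most the chosen $\theta$; this is precisely why step (6) inserts a Cohen$^{\theta_{\delta+2}}$-extension before step (7). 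The bookkeeping for the $\zeta_\alpha$'s and the dovetailing into step (8) for $MA(\kappa)$-style enumeration of ccc posets is identical to the $\AP$ case and requires no new ideas.
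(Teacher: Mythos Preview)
Your proposal is correct and follows essentially the same approach as the paper, which itself simply states that ``the proof only involves very minor modifications to the proof of Theorem~\ref{mainAP} and can be omitted.'' You have correctly identified and spelled out those modifications: Corollary~\ref{boulderstep} and Definition~\ref{boulderdef} replace Corollary~\ref{laverstep} and Definition~\ref{laverdef} at step~(7), and the $\APv$-version of the ultrafilter step replaces the $\AP$-version at step~(4).

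One small remark: for step~(4) you cite Lemma~\ref{APvkappa}, but as stated that lemma only yields that $\{i<\kappa:\mathcal E\cap\mathcal A^{\bz}_i\neq\emptyset\}$ has cardinality $\kappa$, whereas condition~(4) demands nonemptiness for \emph{every} $i<\kappa$. What you actually need is the $\APv$-analogue of Lemma~\ref{APkappa}, whose proof carries over verbatim because Proposition~\ref{staythin} already handles the ``very thin'' case alongside the ``thin'' case. This is a labeling issue rather than a mathematical gap, and the weaker conclusion of Lemma~\ref{APvkappa} would in any case suffice for the eventual application via Lemma~\ref{nopseudo}.
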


\begin{proof}
The proof only involves very minor modifications to the proof of
Theorem \ref{mainAP} and can be omitted.
\end{proof}

\begin{theorem} 
  Assume Hyp$(\kappa,\lambda)$. There is a ccc poset $P$ forcing
  that $\mathfrak s = \mathfrak b = \lambda$, $MA(\kappa)$, and
  $\pi p(\mathcal U) \leq \kappa$ for all free ultrafilters
  $\mathcal U$ on $\mathbb N$.
\end{theorem}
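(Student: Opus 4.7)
The plan is to parallel the proof of Theorem \ref{b=s=lambda} using the sequence $\{\az_\alpha,\zeta_\alpha:\alpha\in\lambda\}$ produced by Theorem \ref{mainAPv} in place of Theorem \ref{mainAP}. Set $P=\bigcup\{P^{\az_\alpha}_\kappa:\alpha\in\lambda\}$. By clause (1) of Theorem \ref{mainAPv} together with the ccc clause in the definition of $\APv$, $\{P^{\az_\alpha}_\kappa:\alpha<\lambda\}$ is a strongly continuous $\cprec$-increasing chain of ccc posets, so $P$ is ccc and $\wp(\mathbb N,P)=\bigcup\{\wp(\mathbb N,P^{\az_\alpha}_\kappa):\alpha<\lambda\}$. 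For $\mathfrak s=\lambda$, I would use clause (\ref{iboulder}): any $P$-name of a family of fewer than $\lambda$ subsets of $\mathbb N$ lies in some $P^{\az_\alpha}_\kappa$ with $\alpha<\lambda$, and the $Q(\vec{\mathcal D})$-extension inserted at a later stage forces (by Proposition \ref{3.1} and clause (7) of Definition \ref{boulderdef}) that the ground model subsets $\wp(\mathbb N,P^{\az_\alpha}_\kappa)$ do not split $\dot L_{\mathcal D}$, so the captured family fails to split. Thus $\mathfrak s$ is forced to be $\lambda=\mathfrak c$.

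For $MA(\kappa)$ and $\pi p(\mathcal U)\leq\kappa$, I would copy the corresponding parts of the proof of Theorem \ref{b=s=lambda} essentially verbatim. For $MA(\kappa)$: given a $P$-name $\dot Q$ of a ccc poset of cardinality less than $\kappa$ together with $\mu<\kappa$ maximal antichain names $\{\dot A_\xi:\xi<\mu\}$, all captured in some $P^{\az_\alpha}_\kappa$, choose $\gamma<\lambda$ so that $\dot Q*\Fn((\gamma,\gamma+\omega),2)$ does not appear in the list $\{h(\zeta):\zeta\leq\zeta_\alpha\}$ and write it as $h(\zeta')$; clause (\ref{ivccc}) of Theorem \ref{mainAPv} guarantees a minimal $\delta$ and minimal $n\geq 3$ with $\zeta_{\delta+n+1}=\zeta'+1$, at which step $\dot Q_{\alpha_{\delta+n+1}}$ is taken to be this name and $P^{\az_{\delta+n+1}}_\kappa$ produces the required generic filter. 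For $\pi p(\mathcal U)\leq\kappa$: given a $P$-name $\dot{\mathcal U}$ for a free ultrafilter, let $\mathcal E$ be the set of all $\dot e\in\wp(\mathbb N,P)$ forced by $1_P$ into $\dot{\mathcal U}$, find a club $C\subset\lambda$ such that for all $\delta\in C\cap S^\lambda_\kappa$, $\mathcal E\cap\wp(\mathbb N,P^{\az_\delta}_\kappa)$ is forced to be a maximal ultrafilter, pass to a subclub $C'$ making $\{h(\xi):\xi\in X\cap\delta\}=\mathcal E\cap\wp(\mathbb N,P^{\az_\delta}_\kappa)$ for $X=\{\xi<\lambda:h(\xi)\in\mathcal E\}$, invoke $\diamondsuit(E)$ to get $\delta\in E\cap C'$ with $X_\delta=X\cap\delta$, and apply clause (\ref{iultrav}) and Lemma \ref{nopseudo}.

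The main obstacle will be the assertion $\mathfrak b=\lambda$. Unlike in Theorem \ref{b=s=lambda}, where the $\mathbb L(\mathcal D)$-extensions in clause (6) supplied by Proposition \ref{sandb} add reals mod-finite dominating the ground model, in the $\APv$ framework the Boulder $Q(\mathcal D)$-extensions of clause (\ref{iboulder}) are specifically designed not to add dominating reals (this is the content of the second proposition following Proposition \ref{3.1}), and the Cohen$^{\theta_\alpha}$-extensions of clause (6) of Theorem \ref{mainAPv} likewise are bounded by ground model functions. To force $\mathfrak b=\lambda$ I would therefore have to inject, at cofinally many stages, a further $P^{\az_\alpha}_\kappa$-name of a ccc poset that (i) is compatible with the very-thin preservation required by $\APv$ and (ii) forces the existence of a mod-finite dominating real over the previous stage; this would be slotted in through the $\az\mapsto\az*\dot Q$ construction of Definition \ref{cccstep} and Lemma \ref{smallccc}, adapted so that bookkeeping via $h$ and $\zeta_\alpha$ enumerates every small name of an $\mathbb N^{\mathbb N}$-family cofinally often, analogously to the $MA(\kappa)$ bookkeeping. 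Capturing a dominating real while maintaining the very-thin preservation hypothesis is the delicate technical point; once that compatibility is established, a standard capture argument using ccc of $P$ completes the proof exactly as for $\mathfrak s$.
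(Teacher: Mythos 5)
Your treatment of the ccc-ness of $P$, of $\mathfrak s=\lambda$ via clause (\ref{iboulder}), of $MA(\kappa)$, and of $\pi p(\mathcal U)\leq\kappa$ is exactly the paper's argument (which indeed just repeats the corresponding portions of the proof of Theorem \ref{b=s=lambda}). The problem is the clause $\mathfrak b=\lambda$, where you correctly sensed trouble but then headed in a direction that cannot succeed. The statement as printed is a misprint carried over from Theorem \ref{b=s=lambda}: the abstract, the section heading ``$\pi p(\mathfrak U)\leq\kappa=\mathfrak b$ and $\mathfrak s=\lambda$,'' and the paper's own proof all make clear that the intended conclusion here is $\mathfrak b=\kappa$, not $\mathfrak b=\lambda$. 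The paper disposes of it in two lines: $\mathfrak b\leq\kappa$ by Lemma \ref{bkappa}, and $\mathfrak b\geq\kappa$ because $MA(\kappa)$ holds.

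Your proposed repair --- interleaving, at cofinally many stages, ccc posets that add mod-finite dominating reals while preserving very-thinness --- is not merely ``delicate,'' it is impossible, and you should have been able to see this from Lemma \ref{bkappa} itself. That lemma applies to \emph{every} $\lap *$-increasing chain from $\APv$ in which the families $\mathcal A_i$ are populated: a single name $\dot a_i\in\mathcal A_i$ that is very thin over the extension by $P_i$ yields, via its enumeration function, a function not dominated by any $\mathbb N^{\mathbb N}$-name appearing at a level below $i$, regardless of which ccc posets you interleave afterwards (very-thinness is, by Definition \ref{apv}, preserved along the whole chain). Since the $\diamondsuit$-stages must populate $\kappa$-many of the $\mathcal A_i$ (Lemma \ref{APvkappa}) in order for Lemma \ref{nopseudo} to give $\pi p(\mathcal U)\leq\kappa$, the conclusion $\mathfrak b\leq\kappa$ is forced no matter what you add. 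In other words, $\mathfrak b=\lambda$ and the $\APv$-mechanism for $\pi p(\mathcal U)\leq\kappa$ are mutually exclusive by design; the correct reading of the theorem is $\mathfrak b=\kappa$, and the correct final step of the proof is the one sentence invoking Lemma \ref{bkappa} together with $MA(\kappa)$.
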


\begin{proof}
 Let $\{ \az_\alpha , \zeta_\alpha : \alpha \in \lambda\}$ be the sequence 
 constructed in Theorem \ref{mainAP}. Let $P$ be the poset
  $\bigcup \{ P^{\az_\alpha}_\kappa : \alpha \in \lambda\}$. Since
  $\{ P^{\az_\alpha}_\kappa : \alpha \in \lambda\}$ is a strongly continuous
   $\cprec$-increasing chain of ccc posets, it follows that $P$ is ccc.
   Furthermore $\wp(\mathbb N , P)$ is equal to the union of the
   increasing sequence $\{\wp(\mathbb N, P^{\az_\alpha}_\kappa)
   :\alpha < \lambda\} $. It then follows immediately from condition
   (\ref{iboulder})
  that $\mathfrak s$ is forced to be $\lambda$. By Proposition \ref{bkappa}, 
$P$ forces that   $\mathfrak b \leq \kappa$. The fact 
that $P$ forces that $\mathfrak b = \kappa$ follows once we
 we note  that $P$ forces that $MA(\kappa)$ holds.
This is proven exactly as in the proof of Theorem \ref{b=s=lambda}
and so can be omitted.
Similarly, simply repeating that portion of 
the proof from Theorem \ref{b=s=lambda}
also proves
that $P$ forces
that $\pi p(\mathcal U)\leq\kappa$ for all free
ultrafilter $\mathcal U$ on $\mathbb N$.
\end{proof}

\begin{bibdiv}

\def\cprime{$'$} 

\begin{biblist}

\bib{Abraham}{article}{
   author={Abraham, Uri},
   title={Proper forcing},
   conference={
      title={Handbook of set theory. Vols. 1, 2, 3},
   },
   book={
      publisher={Springer, Dordrecht},
   },
   date={2010},
   pages={333--394},
 }
	
\bib{BaumDordal}{article}{
   author={Baumgartner, James E.},
   author={Dordal, Peter},
   title={Adjoining dominating functions},
   journal={J. Symbolic Logic},
   volume={50},
   date={1985},
   number={1},
   pages={94--101},
   issn={0022-4812},
   review={\MR{780528 (86i:03064)}},
   doi={10.2307/2273792},
}

\bib{BlassShelah}{article}{
   author={Blass, Andreas},
   author={Shelah, Saharon},
   title={Ultrafilters with small generating sets},
   journal={Israel J. Math.},
   volume={65},
   date={1989},
   number={3},
   pages={259--271},
   issn={0021-2172},
   review={\MR{1005010 (90e:03057)}},
   doi={10.1007/BF02764864},
}	
	
\bib{BrendleSingular}{article}{
   author={Brendle, J{\"o}rg},
   title={The almost-disjointness number may have countable cofinality},
   journal={Trans. Amer. Math. Soc.},
   volume={355},
   date={2003},
   number={7},
   pages={2633--2649 (electronic)},
   issn={0002-9947},
   review={\MR{1975392 (2004c:03062)}},
   doi={10.1090/S0002-9947-03-03271-9},
}

\bib{BrendleFischer}{article}{
   author={Brendle, J{\"o}rg},
   author={Fischer, Vera},
   title={Mad families, splitting families and large continuum},
   journal={J. Symbolic Logic},
   volume={76},
   date={2011},
   number={1},
   pages={198--208},
   issn={0022-4812},
   review={\MR{2791343 (2012d:03113)}},
   doi={10.2178/jsl/1294170995},
}

\bib{BrendleShelah642}{article}{
   author={Brendle, J{\"o}rg},
   author={Shelah, Saharon},
   title={Ultrafilters on $\omega$---their ideals and their cardinal
   characteristics},
   journal={Trans. Amer. Math. Soc.},
   volume={351},
   date={1999},
   number={7},
   pages={2643--2674},
   issn={0002-9947},
   review={\MR{1686797 (2000m:03111)}},
   doi={10.1090/S0002-9947-99-02257-6},
 }

 \bib{Canjar90}{article}{
   author={Canjar, R. Michael},
   title={On the generic existence of special ultrafilters},
   journal={Proc. Amer. Math. Soc.},
   volume={110},
   date={1990},
   number={1},
   pages={233--241},
   issn={0002-9939},
   review={\MR{993747}},
}

\bib{vDHandbook}{collection}{
   title={Handbook of set-theoretic topology},
   editor={Kunen, Kenneth},
   editor={Vaughan, Jerry E.},
   publisher={North-Holland Publishing Co., Amsterdam},
   date={1984},
   pages={vii+1273},
   isbn={0-444-86580-2},
   review={\MR{776619 (85k:54001)}},
}

\bib{FischerSteprans}{article}{
   author={Fischer, Vera},
   author={Stepr{\=a}ns, Juris},
   title={The consistency of $\germ b=\kappa$ and $\germ s=\kappa^+$},
   journal={Fund. Math.},
   volume={201},
   date={2008},
   number={3},
   pages={283--293},
   issn={0016-2736},
   review={\MR{2457482 (2009j:03078)}},
   doi={10.4064/fm201-3-5},
}

\bib{charspectrum}{article}{
   author={Shelah, Saharon},
   title={The character spectrum of $\beta(\mathbb N)$},
   journal={Topology Appl.},
   volume={158},
   date={2011},
   number={18},
   pages={2535--2555},
   issn={0166-8641},
   review={\MR{2847327}},
   doi={10.1016/j.topol.2011.08.014},
}

\bib{Boulder}{article}{
   author={Shelah, Saharon},
   title={On cardinal invariants of the continuum},
   conference={
      title={Axiomatic set theory},
      address={Boulder, Colo.},
      date={1983},
   },
   book={
      series={Contemp. Math.},
      volume={31},
      publisher={Amer. Math. Soc., Providence, RI},
   },
   date={1984},
   pages={183--207},
   review={\MR{763901 (86b:03064)}},
   doi={10.1090/conm/031/763901},
}

\end{biblist}
\end{bibdiv}

\end{document}